\documentclass[12pt]{article}
\oddsidemargin 0pt
\evensidemargin 0 pt
\topmargin -.3in
\headsep 20pt
\footskip 20pt
\textheight 8.5in 
\textwidth 6.5in

\usepackage{harvard}
\bibliographystyle{dcu}

\usepackage{bbm}
\usepackage{wrapfig}
\usepackage{proof}
\usepackage{amsbsy,eucal}  
\usepackage{amsmath}
\usepackage{amsfonts}
\usepackage{latexsym}
\usepackage[all]{xy}  
\DeclareSymbolFont{extraup}{U}{zavm}{m}{n}
\DeclareMathSymbol{\varheart}{\mathalpha}{extraup}{86}
\DeclareMathSymbol{\vardiamond}{\mathalpha}{extraup}{87}

\newcommand{\bbar}{\overline{b}}
\newcommand{\cbar}{\overline{c}}

\newcommand{\dbar}{\overline{d}}
\newcommand{\ebar}{\overline{e}}
\newcommand{\fbar}{\overline{f}}
\newcommand{\gbar}{\overline{g}}

\newcommand{\nott}{\neg}

\newcommand{\andd}{\wedge}
\newcommand{\iif}{\rightarrow}
\newcommand{\vv}{{\cal V}}

\newcommand{\abar}{\overline{a}}
\newcommand{\xbar}{\overline{x}}

\newcommand{\ybar}{\overline{y}}
\renewcommand{\hbar}{\overline{h}}
\newcommand{\set}[1]{\{ #1 \}}
\newcommand{\pair}[1]{\langle #1 \rangle}
\newcommand{\pairalt}[1]{\langle\!\langle #1 \rangle\!\rangle}

\newcommand{\Sdagleq}{\mathcal{L}(\bP)}

  \newcommand{\pbar}{\overline{p}}
  \newcommand{\existsset}{\exists}
    \newcommand{\Model}{{\cal M}}
\newcommand{\Nodel}{{\cal N}}
\renewcommand{\SS}{{\cal{S}}}
\newcommand{\Structure}{\SS}
\newcommand{\rem}[1]{\relax}
\newcommand{\quadiff}{\quad \mbox{iff}\quad}

\newcommand{\TT}{{\cal T}}
\newcommand{\proves}{\vdash}
\newcommand{\cardparentheses}[1]{\cardinal{(#1)}}
\newcommand{\card}[1]{\cardinal{#1}}
\newcommand{\cardinal}[1]{\mbox{card}\,#1}
\newcommand{\qbar}{\overline{q}}

\newcommand{\cS}{\protect{\mathcal{S}}}  %
\newcommand{\existsgeq}{\exists^{\geq}}
\newcommand{\existsmore}{\exists^{>}}
\newcommand{\more}{\exists^{>}}
\newcommand{\leqc}{\leq_c}
\newcommand{\equivc}{\equiv_c}
\newcommand{\bP}{{\mathbf{P}}}

\newcommand{\semantics}[1]{[\![ #1 ]\!]}

\newcommand{\leqmore}{\leqmorestrict}
\newcommand{\leqmorestrict}{<_{more}}

\newcommand{\phibar}{\overline{\phi}}

\renewcommand{\phi}{\varphi}

\newlength{\mathfrwidth}
  \setlength{\mathfrwidth}{\textwidth}
  \addtolength{\mathfrwidth}{-2\fboxrule}
  \addtolength{\mathfrwidth}{-2\fboxsep}
\newsavebox{\mathfrbox}
\newenvironment{mathframe}
    {\begin{lrbox}{\mathfrbox}\begin{minipage}{\mathfrwidth}}
    {\end{minipage}\end{lrbox}\noindent\fbox{\usebox{\mathfrbox}}}

\usepackage{amsthm}
\theoremstyle{definition}
\newtheorem{theorem}{Theorem}[section]
\newtheorem{lemma}[theorem]{Lemma}
\newtheorem{claim}[theorem]{Claim}
\newtheorem{proposition}[theorem]{Proposition}
\newtheorem{corollary}[theorem]{Corollary}
\newtheorem{definition}[theorem]{Definition}

 \newtheorem{ex}[theorem]{Example}
     \newtheorem{remark}[theorem]{Remark}

\setcounter{tocdepth}{1}
\makeindex

\begin{document}
\title{Syllogistic Logic with Cardinality Comparisons,\\  On Infinite Sets}
\author{Lawrence S.~Moss\footnote{This work was partially supported by a grant from the Simons Foundation ($\#$245591 to Lawrence Moss).}
\ and\ Sel\c{c}uk Topal}
\providecommand{\keywords}[1]{\textbf{\textit{Keywords: }} #1}

\date{\today}
\maketitle

\begin{abstract}
    {This paper enlarges classical syllogistic logic with assertions having to do with 
        comparisons between the sizes of sets.
        So it concerns a logical system whose sentences
        are of the following forms:     
        {\sf All $x$ are $y$} and {\sf Some $x$ are $y$},  
        {\sf There are at least as many $x$ as $y$}, and {\sf There are more $x$ than $y$}.
        Here $x$ and $y$ range over subsets (not elements) of a given \emph{infinite} set.
         Moreover, $x$ and $y$ may appear complemented (i.e., as $\xbar$ and $\ybar$), with the natural meaning. 
          We formulate a logic for our language that is based on the classical syllogistic. The main result is a soundness/completeness
        theorem.  There are efficient algorithms for  proof search and model construction.      }
    
\end{abstract}

\keywords{Logic of natural languages; completeness; syllogistic logic; 
    
    \hspace{2.7cm}infinite sets; cardinality comparisons }

\section{Introduction}
\label{section-Introduction}

This paper is a contribution to the study of \emph{extended syllogistic logics}.  The idea is to take the classical syllogistic
$\cS$
 as a base system and to extend this base with reasoning 
power for some phenomenon which is not expressible in first-order logic, and then to show that on top of the small base, the resulting system is still well-behaved.
We thus take the logical system $\cS$ whose sentences are of the form   {\sf All $p$ are $q$} and {\sf Some $p$ are $q$},  and   {\sf No $p$ are $q$}.
We shall explain the semantics shortly, but for the time being we do without the semantics. 
To add expressive power, we also allow the variables to be complemented. 
So for each $p$ we would also have $\pbar$; the interpretation is the set theoretic complement of $\semantics{p}$. The complementation operation
also provides an extra natural sentence coming from the Aristotelian syllogistic to the system: {\sf Some $p$ are not $q$}.
In our work, this is expressed via the syntax {\sf Some $p$ are  $\qbar$}.
We call this system $\cS^\dag$.

In formulating $\cS^\dag$, we may omit  {\sf No $x$ are $y$}, as it is equivalent to
 {\sf All $x$ are $\ybar$}.     This paper enlarges the syntax with 
 assertions   {\sf There are at least as many $x$ as $y$}, and {\sf There are more $x$ than $y$}.   
In this paper, we  require the universe to be infinite.   The case of finite universes 
was treated in~\cite{moss:forDunn}
The syntax and semantics of the systems
        are exactly the same.
    But the main point of the paper is to craft a sound and complete proof system, and the proof system in this paper is different from the one in~\cite{moss:forDunn}.

\setcounter{equation}{0}

\label{section-syntax}

\paragraph{Syntax: preliminary} 
Here is the definition of 
the logical language $\Sdagleq$  which we study in this paper.
A \emph{noun system} is a pair
$$\begin{array}{lcl}
\bP & = & (P, \overline{\phantom{p}})
\end{array}
$$
where $P$ is a set, and $p\mapsto \pbar$ is an involutive operation on $P$.
Thus, $\overline{\pbar} = p$ for all $p\in \bP$.
We call the elements of $\bP$ \emph{nouns}.
All Roman letters $p$, $q$, $x$, $y$, $\ldots$ denote nouns.

We fix a noun system $\bP$ for the rest of this paper. 

\paragraph{Syntax of $\Sdagleq$:}
$\Sdagleq$
has sentences $\forall(p,q)$ and 
$\exists(p,q)$,  $\existsgeq(p,q)$, $\more(p,q)$.   We read ``$\existsgeq(p,q) $'' as ``there are at least as many $p$ as $q$'',
and we read ``$\more(p,q) $'' as ``there are more $p$ than $q$.''
There are no connectives.

\paragraph{Semantics}  

\begin{definition}
A \emph{structure} $\SS  $ is  a pair consisting of a set $S$ and an 
\emph{interpretation function} $\semantics{\ }: \bP \to \mathcal{P}(S)$, where $\mathcal{P}(S)$ is the power set of $S$.
That is, a structure is a set $S$ together with a function which interprets nouns as subsets of $S$.
A \emph{model} is a structure $\Model = (M, \semantics{\ })$ with the additional property that for all $p$,
$\semantics{\pbar} = M\setminus\semantics{p}$ for all nouns $p$.
\label{def-structure-model}
\end{definition}

We define the \emph{satisfaction} relation between models and sentences as follows:
\begin{equation}
\begin{array}{lcl}
\Model\models  \forall(p,q) & \quadiff & \semantics{p} \subseteq \semantics{q} \\
\Model\models  \exists(p,q) & \quadiff & \semantics{p} \cap \semantics{q}  \neq \emptyset \\
\Model\models  \existsgeq(p,q)  & \quadiff &   \card{\semantics{p}} \geq   \card{\semantics{q}}  \\
\Model\models  \more(p,q)  & \quadiff   &  \card{\semantics{p}} >   \card{\semantics{q}}   \\
\end{array}
\label{semantics}
\end{equation}
On the right in (\ref{semantics}), the symbol $\card{S}$ stands for the cardinality   of a
given set $S$.

\begin{remark}
    Our treatment of cardinality and of cardinal comparison is
    completely standard.  We recall that   $\cardparentheses{X} \leq \cardparentheses{Y}$ means that there is a one-to-one 
    $f: X\to Y$.  
    Also, $\cardparentheses{X} <  \cardparentheses{Y}$  means that   $\cardparentheses{X} \leq \cardparentheses{Y}$, but $\nott(\cardparentheses{Y} \leq \cardparentheses{X})$.
    All of our work with infinite cardinals is also standard.   We remind the reader that
    if $X$ and $Y$ are infinite sets, then $\cardparentheses{X\cup Y} = \max(\cardparentheses{X},\cardparentheses{Y})$. 
    
    In addition, we use the Axiom of Choice at several points in this paper. 
  For example, if $\Model\models  \more(p,q) $ is false, then $\Model\models  \existsgeq(q,p) $ is true. We also use it equivalent, Zorn's Lemma, in the proof of  Lemma~\ref{lemma-use-zorn}.
    
Let us  also mention that the main result in this paper  needs very little set theory.   The only notable point is that there are infinitely many infinite cardinals.
\end{remark}

\begin{remark}
    We study our logic on infinite models $\Model$ in this paper.  However, 
    $\semantics{p}$ is not required to be infinite.  It might even be empty.
\end{remark}

\rem{
In (\ref{semantics}), the letters $p$ and $q$ range over all nouns,
not just over the raw variables.     In other words,  we have saved a lot of needless repetition
by allowing $p$ and $q$ to be either raw variables or complemented variables.
We continue this practice throughout the paper.
}

Special attention should be given to sentences $\existsgeq(p,\pbar)$.
When the universe of a model is finite, 
$\existsgeq(p,\pbar)$ says that there are at least as many $p$s as non-$p$s.
This sentence $\existsgeq(p,\pbar)$   might, therefore, be read as 
``the $p$'s are at least half of the objects in the universe.''
Similarly, $\existsgeq(\pbar,p)$ might be read as ``the $p$'s are at most half of the objects in the universe.''
We can also read 
$\more(p,\pbar)$ as ``the $p$'s are more than half of the objects in the universe,''
and 
$\more(\pbar,p)$   as ``the $p$'s are less than half of the objects in the universe.''
However, in this paper, all models $\Model$ have an infinite  universe $M$.
So our   talk about ``half'' must be taken with a grain of salt.  
To say that 
$\Model\models\existsgeq(p,\pbar)$ in fact says that $\card{\semantics{p}} = \card{M}$.
To say that 
$\existsmore(p,\pbar)$   says that $\card{\semantics{p}} = \card{M}$ and also $\card{\semantics{\pbar}} < \card{M}$.

\begin{definition}
    \label{semanticnegation}
For every sentence $\phi$, there is a sentence $\phibar$
    such that $\Model\models\phibar$ iff $\Model\not\models\phi$.
    Here is how this works:
    $$\begin{array}{l@{\qquad}l}
    \mbox{$\phi$} & \phibar\\
    \hline
    \forall(p,q) & \exists(p,\qbar) \\
    \exists(p,q) & \forall(p,\qbar) \\
    \existsgeq(p,q) & \more(q,p) \\
    \more(p,q)  & \existsgeq(q,p)\\
    \end{array}
    $$
The operation symbol $\overline{\phantom{\phi}}$ on sentences is not part of the  language $\Sdagleq$.    
\end{definition}

\begin{definition} Let $\Gamma\cup\set{\phi}\subseteq \Sdagleq$.
$\Model\models\Gamma$ means that $\Model\models\phi$ for all $\psi\in\Gamma$ 
and 
$\Gamma\models\phi$ means that if $\Model\models\Gamma$,
then also 
$\Model\models\psi$.
\end{definition}

\subsection{Proof system}

The main goal of this paper is to present a sound and complete logic for the semantic consequence relation $\Gamma\models\phi$.
This section describes the system and provides examples.
The rules of the logic are shown in Figures~\ref{figure-finite-setting-soundpart} and~\ref{figure-secondpart}.

Let $\Gamma$ be a set of sentences in $\Sdagleq$.
A \emph{proof tree over $\Gamma$} 
is a
finite  tree  $\TT$ 
whose nodes are labelled with sentences,
and
each node is either a leaf node labelled with an element of $\Gamma$,
or else matches one of the rules in the proof system  in 
one of  Figures~\ref{figure-finite-setting-soundpart} or \ref{figure-secondpart}.
\emph{$\Gamma \proves \phi$} means that there is a  proof tree $\TT$
for  over $\Gamma$
whose root is labelled $\phi$.

\begin{figure}[p]
    \begin{mathframe}
        $$
        \begin{array}{c@{\quad}c@{\quad}c}
        \infer[{\mbox{\sc (axiom)}}]{\forall(p,p)}{}
        &
        \infer[{\mbox{\sc (barbara)}}]{\forall(n,q)}{\forall(n,p) & \forall(p,q)} 
        \\  \\
        \infer[{\mbox{\sc (some)}}]{\exists(p,p)}{ \exists(p,q)} 
        &
        \infer[{\mbox{\sc (conversion)}}]{\exists(p,q)}{ \exists(q,p)} 
        \\  \\  
        \infer[{\mbox{\sc (darii)}}]{\exists(p,q)}{ \exists(p,n)  & \forall(n,q)} 
        & 
        \infer[\mbox{\sc (anti)}]{\forall(\qbar,\pbar)}{ \forall(p,q)} 
        \\  \\
        \infer[{\mbox{\sc (zero)}}]{\forall(p,q)}{ \forall(p,\pbar)} 
        &
        \infer[{\mbox{\sc (one)}}]{\forall(q,p)}{ \forall(\pbar,p)}  
        \\  \\
        
        \infer[{\mbox{\sc (subset-size)}}]{\existsgeq(q,p)}{ \forall(p,q)} 
        & 
        \infer[{\mbox{\sc (card-trans)}}]{\existsgeq(n,q)}{\existsgeq(n,p) &\existsgeq(p,q)}
        \\  \\
        \infer[{\mbox{\sc (card-$\exists$)}}]{\exists(q,q)}{\exists(p,p) & \existsgeq(q,p)}
        &
        \infer[{\mbox{\sc (more-at least)}}]{\existsgeq(p,q)}{\more(p,q) }
        \\  \\   
        \infer[{\mbox{\sc (more-left)}}]{\more(n,q)}{\more(n,p) & \existsgeq(p,q) }
        & 
        \infer[{\mbox{\sc (more-right)}}]{\more(n,q)}{\existsgeq(n,p) & \more(p,q) }  
        \\  \\
        \infer[{\mbox{\sc (more-some)}}]{\exists(p,\qbar)}{\more(p,q) }
        &
        \\   \\
        & 
        
        \\ \\
        \infer[{\mbox{\sc (x)}}]{\phi}{\exists(p,q) & \forall(q,\qbar) } 
        &
        \infer[{\mbox{\sc (x-card)}}]{\phi}{\more(p,q) & \existsgeq(q,p) } 
        \end{array}  
        $$
    \end{mathframe}
    \caption{The first part of the rules of our logical system.}   
    \label{figure-finite-setting-soundpart}
\end{figure}

\rem{
\begin{proposition}\label{prop-is-finite}
    Let $\Model$ be any model,  let $x$, $y$, and $z$ be any nouns, 
    and assume that $\existsmore(p,x)$ and $\existsmore(q,\xbar)$ are true in $\Model$.
    Then $\Model$ is finite.
\end{proposition}

\begin{proof}
    Suppose not; let $\kappa = \cardparentheses{M}$.  So $\kappa$  is infinite.
    Let $\lambda = \card{\semantics{x}}$, and let  $\lambda' = \card{\semantics{\xbar}}$
    On the one hand, $\lambda,\lambda'  < \kappa$.  But 
    $$\begin{array}{lclclclclclcl}
    \kappa & = &\card{M} & = & \cardparentheses{\semantics{x}\cup\semantics{\xbar}}
    &\leq & \max(\lambda, \lambda') &< & \kappa.
    \end{array}
    $$
    This is a contradiction.
\end{proof}
}

\rem{
At this point, we have some rules which are sound for both finite and infinite models
(Figure~\ref{figure-finite-setting-soundpart}).   And we know some rules which are not sound for infinite models,
(Figure~\ref{figure-finite-setting-unsoundpart}).   These rules must be dropped in this paper.
Doing this gives a sound system, but the result would not be complete.
Thus, we search for some extra rules which are sound for infinite models.
One way to do this is to weaken hypotheses in the unsound rules.  Another way to find rules is to 
prove the completeness theorem carefully and to ``get stuck'', thereby noticing when a new rule is needed.
In fact, we used both strategies in finding the rules of our system.
The new rules are listed in Figure~\ref{figure-secondpart}.
}

\begin{proposition} [Soundness]
    If $\Gamma\proves \phi$, then $\Gamma\models \phi$.
    \label{prop-soundness}
\end{proposition}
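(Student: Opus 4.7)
The plan is a routine induction on the height of the proof tree $\TT$ witnessing $\Gamma \proves \phi$. Fix a model $\Model$ with $\Model \models \Gamma$; I will show that every sentence labelling a node of $\TT$ is true in $\Model$, so that in particular the root sentence $\phi$ is. For the base case, a leaf is labelled either by an element of $\Gamma$ (true by assumption) or by an instance of (axiom), namely $\forall(p,p)$, which holds because $\semantics{p} \subseteq \semantics{p}$.

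For the inductive step, I verify rule by rule that if all premises hold in $\Model$ then so does the conclusion. The rules of Figure~\ref{figure-finite-setting-soundpart} split naturally into four groups, each handled by an elementary set-theoretic observation. The \emph{syllogistic} rules (barbara), (some), (conversion), (darii), (anti), (zero), (one) rely only on the identity $\semantics{\pbar} = M \setminus \semantics{p}$ and basic facts about subsets and intersections; for instance (zero) uses that $\semantics{p} \subseteq M \setminus \semantics{p}$ forces $\semantics{p} = \emptyset$, (one) dually uses that $M\setminus \semantics{p} \subseteq \semantics{p}$ forces $\semantics{p} = M$, and (anti) uses that complementation reverses inclusion. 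The \emph{cardinality} rules (subset-size), (card-trans), (more-at least), (more-left), (more-right) reduce to standard order properties of cardinal numbers: $A \subseteq B$ implies $\card{A} \leq \card{B}$, transitivity of $\leq$, that $<$ implies $\leq$, and that mixing $<$ with $\leq$ on either side still yields $<$. The \emph{bridging} rules (card-$\exists$) and (more-some) use that nonemptiness is preserved upward in cardinality, and that $\card{\semantics{p}} > \card{\semantics{q}}$ forces $\semantics{p} \not\subseteq \semantics{q}$, whence $\semantics{p} \cap \semantics{\qbar} \neq \emptyset$.

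The two \emph{ex falso} rules (x) and (x-card) deserve slightly more care: I show that in each case the premises cannot simultaneously hold in any model, so the rule vacuously produces any conclusion $\phi$. For (x), $\exists(p,q)$ forces $\semantics{q} \neq \emptyset$, while $\forall(q,\qbar)$ forces $\semantics{q} \subseteq M\setminus \semantics{q}$ and hence $\semantics{q} = \emptyset$, a contradiction. For (x-card), $\more(p,q)$ and $\existsgeq(q,p)$ flatly contradict the definitions of $>$ and $\geq$ on cardinals. The rules of Figure~\ref{figure-secondpart} would be treated in exactly the same spirit, each by a short cardinal calculation possibly invoking the identity $\card{X \cup Y} = \max(\card{X}, \card{Y})$ for infinite $X, Y$. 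There is no real obstacle here: soundness is a rule-by-rule verification with no global argument required, and the only mildly substantive point is the observation that the ex falso rules are sound precisely because their premise sets are jointly unsatisfiable in any model.
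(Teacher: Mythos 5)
Your overall strategy---induction on the height of the proof tree, reducing to a rule-by-rule check against a fixed model of $\Gamma$---is exactly the paper's, and your verifications of the Figure~\ref{figure-finite-setting-soundpart} rules are all correct. The gap is that you defer precisely the four rules whose soundness is the actual content of this proposition. The rules of Figure~\ref{figure-finite-setting-soundpart} are inherited from the finite-model system of \cite{moss:forDunn} and are sound on arbitrary models; the rules of Figure~\ref{figure-secondpart}---({\sc non-empty}), ({\sc non-empty-more}), ({\sc weak-more-anti}), and ({\sc up})---are the new ones, and they are \emph{not} sound on finite universes, so ``treated in exactly the same spirit'' understates what has to happen. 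In particular, ({\sc weak-more-anti}) and ({\sc up}) have conclusions containing a noun ($x$, resp.\ $q$) that does not occur in the premises at all: one must show that $\semantics{\pbar}$ (resp.\ $\semantics{x}$) is at least as large as \emph{every} interpreted set, and that works only because its cardinality is forced to equal $\card{M}$.

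You do name the right tool, $\card{X\cup Y}=\max(\cardparentheses{X},\cardparentheses{Y})$ for infinite $X,Y$, so the missing verifications are short. Writing $\kappa=\card{M}$: for ({\sc weak-more-anti}), the premise $\card{\semantics{q}}>\card{\semantics{p}}$ forces $\card{\semantics{p}}<\kappa$, hence $\card{\semantics{\pbar}}=\kappa\geq\card{\semantics{x}}$ for all $x$; for ({\sc up}), the premises give $\card{\semantics{x}}\geq\max(\card{\semantics{p}},\card{\semantics{\pbar}})=\kappa\geq\card{\semantics{q}}$ for all $q$; for ({\sc non-empty}), the premise forces $\card{\semantics{p}}=\kappa>0$; and for ({\sc non-empty-more}), the premise forces $\semantics{\pbar}=\emptyset$ and $\semantics{p}=M$, which is infinite. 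Adding these four lines would make your proof complete and identical in substance to the paper's.
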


\begin{proof} 
    By induction on the heights of proof trees.
    The proof reduces to showing that all of the rules are individually sound.    
The rules in Figure~\ref{figure-finite-setting-soundpart} discussed  in~\cite{moss:forDunn}, and in any case  they are easy to justify.
 
 We show the soundness of  the rules in 
Figure~\ref{figure-secondpart}   by considering
a single (arbitrary) infinite model $\Model$, say of size $\kappa$.
We assume the hypotheses of the rules in turn and show the conclusions.
This will show that our logic is sound.

Let us show the soundness of ({\sc non-empty}).
If  $\existsgeq(p,\pbar)$, then $\semantics{p} =  \kappa > 0$.

The rule ({\sc non-empty-more}) is sound because infinite sets are not empty.

Consider ({\sc weak-more-anti}).
If $\card{\semantics{q}} > \card{\semantics{p}}$,  then $\card{\semantics{p}} < \kappa$.
And so $\card{\semantics{\pbar}} = \kappa$.   Thus 
$\card{\semantics{\pbar}} \geq \card{\semantics{x}}$ for all $x$.

\rem{
    Consider  ({\sc soft-strict-half}). 
    The hypotheses imply that 
    $\card{\semantics{p}} = \kappa$, $\card{\semantics{\pbar}} < \kappa$, 
    and  $\card{\semantics{q}} = \kappa$.
    If $\forall(p,\qbar)$ held in $\Model$,
    then $\forall(q,\pbar)$ would also be true, by the soundness of ({\sc anti}).   
    So $\card{\semantics{q}} \leq  \card{\semantics{\pbar}}$, by soundness of ({\sc subset-size}).
    But 
    $\card{\semantics{\pbar}} < \kappa = \card{\semantics{q}}$, and so we have a contradiction.
}

Turning to ({\sc up}), suppose that 
$\semantics{x}$ is at least as large as both $\semantics{p}$ and $\semantics{\pbar}$.
Then $\semantics{x}$ is at least as their maximum.   And this maximum is $\card{M}$.   
So  $\semantics{x}$ is at least as large as the size of any set.
\end{proof}

This concludes our discussion of the soundness of the logic.  We turn to examples.

\begin{figure}[t]
    \begin{mathframe}
        $$
        \begin{array}{c@{\qquad}c}
        \infer[{\mbox{\sc (non-empty)}}]{ \exists(p,p)}{ \existsgeq(p,\pbar)} 
        &
        \infer[{\mbox{\sc (non-empty-more)}}]{ \more(p,\pbar)}{ \forall(\pbar,p)}  
        \\   \\
        \infer[{\mbox{\sc (weak-more-anti)}}]{\existsgeq(\pbar,x)}{\more(q,p) }
        
        &       \infer[{\mbox{\sc (up)}}]{ \existsgeq(x,q)}{ \existsgeq(x,p)  & \existsgeq(x,\pbar)} 
        \\  \\
        \end{array}$$
    \end{mathframe}
    \caption{The second part of the rules for the logic in this paper.
        \label{figure-secondpart}}
\end{figure}

\begin{ex} $\forall(\xbar,x) \proves\exists(x,x)$.   Here are two different derivations:
    $$
    \begin{array}{l@{\qquad\qquad}l}
    \infer[\mbox{\sc (non-empty)}]{\exists(x,x)}
    {
        \infer[{\mbox{\sc (subset size)}}]{\existsgeq(x,\xbar)}{\forall(\xbar,x)}
    }  &
    \infer[\mbox{\sc (more-some)}]{\exists(x,x)}
    {
        \infer[{\mbox{\sc (non-empty-more)}}]{\more(x,\xbar)}{\forall(\xbar,x)}
    }
    \end{array}    
    $$
    \label{example-non-empty}
\end{ex}

\begin{ex}
    $\existsgeq(p,\pbar) \proves \existsgeq(p,q)$.   Here is a derivation:

    $$\infer[{\mbox{\sc (up)}}]{\existsgeq(p,q)}
    {\existsgeq(p,\pbar)
        &
        \infer[{\mbox{\sc (subset-size)}}]{\existsgeq(p,p)}{\infer[{\mbox{\sc (axiom)}}]{\forall(p,p)}{}}
    }
    $$
    
    \label{example-Dedekind}
\end{ex}

\begin{ex}
    $\existsgeq(p,\pbar), \existsgeq(q,p) \proves \existsgeq(q,x)$.
    Here is a derivation which quotes the result just above:
    $$ \infer[{\mbox{\sc (card trans)}}]{\existsgeq(q,x)}
    {\existsgeq(q,p) &
        \infer[{\mbox{\sc (example~\ref{example-Dedekind})}}]{ \existsgeq(p,x)}{ \existsgeq(p,\pbar)} 
    }
    $$
    \label{example-halfstar}
\end{ex}

\begin{ex}
    \label{ex-Top1}
    $\existsgeq(p,\pbar), \more(x,q) \proves \more(p,q)$:
    $$
    \infer[{\mbox{\sc (more-right)}}]
    {\more(p,q)}
    {
        \infer[{\mbox{\sc (example~\ref{example-Dedekind})}}]{ \existsgeq(p,x)}{ \existsgeq(p,\pbar)} 
        &
        \more(x,q)
    }
    $$
\end{ex}

\begin{ex}
    $\existsmore(p,\pbar), \existsgeq(\pbar,\qbar) \proves \existsmore(q, \qbar)$:

    $$
    \infer[{\mbox{\sc (weak-more-anti)}}]{\existsgeq(q,\qbar)}
    {
        \infer[{\mbox{\sc (more-left)}}]{\more(p,\qbar)}{\more(p,\pbar) & \existsgeq(\pbar,\qbar)}
    }
    $$
    \rem{
        Here is a derivation in a list format which was found by our compute implementation:
        \begin{verbatim}
        (1,there are more p than non-p,"A",[])
        (2,there are at least as many non-p as non-q,"A",[])
        (3,there are more p than non-q,"more left",[1,2])
        (4,there are at least as many q as p,"weak-more-anti",[3])
        (5,there are more p than non-p,"A",[])
        (6,there are at least as many non-p as non-q,"A",[])
        (7,there are more p than non-q,"more left",[5,6])
        (8,there are more q than non-q,"more right",[4,7])
        \end{verbatim}
        (We have improved the English.)
        This is a proof tree squashed to a linear list.
    }
\end{ex}

\begin{ex}
    $\existsgeq(p,\pbar),\more(q,\qbar) \proves\exists(p,q)$:
    
    \rem{
        $$\infer[{\mbox{\sc (more-some)}}]{\exists(p,q)}
        {
            \infer[{\mbox{\sc (more-right)}}]{\more(p,\qbar)}
            {
                \infer[{\mbox{\sc (up)}}]{\existsgeq(p,q)}
                {\existsgeq(p,\pbar)}
                &
                \more(q,\qbar)
            }
        }
        $$
    }

    $$\infer[{\mbox{\sc (more-some)}}]{\exists(p,q)}
    {
        \infer[{\mbox{\sc (more-right)}}]{\more(p,\qbar)}
        {
            \infer[{\mbox{\sc (example~\ref{example-Dedekind})}}]{\existsgeq(p,q)}
            {\existsgeq(p,\pbar)
            }
            &
            \more(q,\qbar)
        }
    }
    $$
    
    \rem{
        \begin{verbatim}
        (1,there are at least as many p as non-p,"A",[])
        (2,all p are p,"axiom",[])
        (3,there are at least as many p as p,"subset size",[2])
        (4,there are at least as many p as q,"up",[1,3])
        (5,there are more q than non-q,"A",[])
        (6,there are more p than non-q,"more right",[4,5])
        (7,some p are q,"more-some",[6])
        \end{verbatim}
    }
    \label{ex-softstricthalf}
\end{ex}

\begin{ex}
    \label{ex-int}  Here is a  rules from~\cite{moss:forDunn} called 
 ({\sc int}).
We include this rule because it is used later in this paper.
({\sc int}) says  $\exists(p,p),\more(q,\qbar) \proves\exists(q,q)$.
Here is a derivation: 
    
    $$
    \infer[{\mbox{\sc (Card-$\exists$)}}]{\exists(q,q)}
    {
        \infer[{\mbox{\sc (example~\ref{example-Dedekind})}}]{\existsgeq(q,p)}
        {\existsgeq(q,\qbar)
        }
        &
        \exists(p,p)
    }
    $$\\
    
\rem{    
    For ({\sc half}), $\existsgeq(p,\pbar),\existsgeq(q,\qbar) \proves\existsgeq(p,q)$: 
    
    $$
    \infer[{\mbox{\sc (example~\ref{example-Dedekind})}}]{\existsgeq(p,q)}
    {\existsgeq(p,\pbar)
    }
    $$
    }

\end{ex}

\rem{
\begin{ex}
    In the logic of~\cite{moss:forDunn}, {\sc (x)} is derivable from 
    {\sc (x-card)}.   The derivation uses the rule {\sc (more)} from Figure~\ref{figure-finite-setting-unsoundpart},
    and so it is not available in this paper.
    Indeed, neither   {\sc (x)}  nor {\sc (x-card)} is derivable from the other in the 
    logic of this paper.  And so we have stated both of them in Figure~\ref{figure-finite-setting-soundpart}.
\end{ex} 
}

\begin{ex} We have a derivation of $\existsmore(p,x), \existsmore(q,\xbar)\proves \phi$.
    That is, we derive a contradiction from the two assumptions 
    $\existsmore(p,x)$ and $\existsmore(q,\xbar)$.
    
    $$\infer[\mbox{\sc (x-card)}]{\phi}
    {
        \infer[\mbox{\sc (more-left)}]{\existsmore(q,q)}
        {
            \existsmore(q,\xbar) &
            \infer[\mbox{(\sc weak-more-anti)}]{\existsgeq(\xbar, q)}{\existsmore(p,x)}
        }
        & 
        \infer[\mbox{\sc (subset-size)}]{\existsgeq(q,q)}{\forall(q,q)}
    }
    $$
    \label{first-contradiction}
\end{ex}

\begin{ex}
    \label{example-first-in-logic}
    \label{second-contradiction}
    $\existsgeq(p,\pbar), \existsmore(q,p) \proves \phi$. 
    Here is a derivation:
    $$ \infer[\mbox{\sc (more-left)}]{\existsmore(q,q)}
    {\existsmore(q,p) &
        \infer[\mbox{\sc (card-trans)}]{\existsgeq(p,q)}{\existsgeq(p,\pbar)   &
            \infer[\mbox{\sc (weak-more-anti)}]{\existsgeq(\pbar,q)}{\existsmore(q,p)}
        }
    }
    $$
    And the rest at the bottom is as in Example~\ref{first-contradiction}.
\end{ex}

\begin{remark} In the remainder of this paper, $\Gamma$ denotes a 
    {\em finite\/} set of sentences.   The reason for this restriction is that the logic is not compact.
    Specifically, the set
    $$\begin{array}{lcl}
    \Gamma& = & 
    \set{\existsmore(x_0,x_1), \existsmore(x_1,x_2), \ldots, \existsmore(x_n,x_{n+1}), \ldots}
    \end{array}
    $$
    has no model, but every finite subset of it does have a model.
    So $\Gamma\models \forall(p,q)$.   But clearly we cannot have $\Gamma\proves\exists(p,q)$,
    no matter what the rules of the (sound) proof system are.
    That is, we cannot hope to define a proof system and show that
    for all (possibly infinite) $\Gamma$ and all $\phi$: 
    $\Gamma\models\phi$
    iff $\Gamma\proves\phi$.
\end{remark}

\subsection{Prior work}

\begin{figure}[t]
    \begin{mathframe}
        $$
        \begin{array}{c@{\quad}c@{\quad}c}
        \infer[{\mbox{\sc (card-mix)}}]{\forall(q,p)}{\forall(p,q) & \existsgeq(p,q)}
        &
        \infer[{\mbox{\sc (more)}}]{\more(p,q)}{\forall(q,p) & \exists(p,\qbar) }
        \\ \\
        \infer[{\mbox{\sc (more-anti)}}]{\more(\pbar,\qbar)}{\more(q,p) }
        &
        \infer[{\mbox{\sc (card-anti)}}]{\existsgeq(\qbar,\pbar)}{\existsgeq(p,q)} 
        \\   \\ 
        \infer[{\mbox{\sc (strict half)}}]{\more(p,q)}{\more(p,\pbar) & \existsgeq(\qbar,q) }
        &
        \infer[{\mbox{\sc (maj)}}]{\exists(p,q)}{\existsgeq(p,\pbar) & \existsgeq(q,\qbar) & \exists(\pbar,\qbar) }
        \\  \\
        \end{array}
        $$
    \end{mathframe}
    \caption{These rules are also part of the sound and complete system for reasoning about finite sets, 
        but they  are \emph{not} sound for the 
        wider semantics of this paper.  
        \label{figure-finite-setting-unsoundpart}}
\end{figure}

We make a short digression at this point to mention the difference between the logical system in~\cite{moss:forDunn}
and the one here.   The reader not interested in this point may safely skip ahead to Section~\ref{section-consistent-sets}.

 The syntax used in this paper is exactly the same as the one in~\cite{moss:forDunn}.
The definition of $\Model\models\phi$ is the same, too.   The difference lies in the fact that  in~\cite{moss:forDunn},
the models were taken to be finite, while in this paper they are taken to be infinite.
The rules in Figure~\ref{figure-finite-setting-unsoundpart}
are from~\cite{moss:forDunn}; they are  not   sound on infinite models.

Let us check that these are not sound when we allow the universe to be infinite.  
We give some counterexamples.  In all cases, we take $M = \omega$.

For {\sc (card-mix)}, 
let $\semantics{q} = \omega \setminus\set{0}$ and $\semantics{p} = \omega$. 

For {\sc (more)}, let $\semantics{q} = \omega \setminus\set{0}$ and $\semantics{p} = \omega$. 

For {\sc (more-anti)},  let $\semantics{q} = \set{0,1}$ and let 
$\semantics{p} = \set{0}$. 

For {\sc (card-anti)},  $\semantics{p} = \omega$ and let 
$\semantics{q} = \omega\setminus\set{0}$.

For {\sc (strict-half)}, let $ \semantics{p} = \omega$, and let $\semantics{q}$ be the set of even numbers.
Then $\card{\semantics{p}} > \card{\semantics{\pbar}}$, and 
$\card{\semantics{q}} = \card{\semantics{\qbar}}$; but 
$\card{\semantics{p}} \not> \card{\semantics{q}}$.

For {\sc (maj)},
let $$\begin{array}{lcl} \semantics{p} & = &  \set{3n: n \in N}\\
\semantics{q} & = & \set{3n + 1: n \in N}
\end{array}
$$
Then $\card{\semantics{p}} = \card{\semantics{\pbar}}  = \card{\semantics{q}} = \card{\semantics{\qbar}}$. 
Then $2\in \semantics{\pbar}\cap \semantics{\qbar}$, but $\semantics{p}\cap \semantics{q} = \emptyset$.

\begin{theorem}  \cite{moss:forDunn}  The logical system whose
    rules are shown in Figures~\ref{figure-finite-setting-soundpart} and~\ref{figure-finite-setting-unsoundpart}
    is sound and complete for the class
    of all \emph{finite models}. 
    \label{thm:finite}
\end{theorem}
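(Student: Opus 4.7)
Since this theorem is cited from~\cite{moss:forDunn}, I sketch the strategy rather than give a full proof. For soundness, each rule is verified individually. The rules in Figure~\ref{figure-finite-setting-soundpart} are sound on arbitrary models, as already discussed around Proposition~\ref{prop-soundness}. The rules in Figure~\ref{figure-finite-setting-unsoundpart} become sound on finite universes by elementary counting: in any finite $M$ one has $\card{\semantics{p}} + \card{\semantics{\pbar}} = \card{M}$, so $\card{\semantics{p}} > \card{\semantics{q}}$ forces $\card{\semantics{\pbar}} < \card{\semantics{\qbar}}$, which handles (more-anti) and (card-anti); and $\semantics{p} \subseteq \semantics{q}$ together with $\card{\semantics{p}} \geq \card{\semantics{q}}$ forces $\semantics{p} = \semantics{q}$, which handles (card-mix) and (more). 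Rule (maj) holds because if $\card{\semantics{p}}$ and $\card{\semantics{q}}$ are each at least $\card{M}/2$ and $\semantics{p} \cap \semantics{q} = \emptyset$, then $\semantics{\pbar} \cup \semantics{\qbar} = M$, contradicting the third hypothesis $\exists(\pbar,\qbar)$. Rule (strict half) is similar.

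For completeness, assume $\Gamma \not\proves \phi$ and build a finite $\Model$ with $\Model \models \Gamma$ and $\Model \not\models \phi$, equivalently $\Model \models \phibar$ (Definition~\ref{semanticnegation}). The plan is a canonical construction in three stages. First, read off from the derivability relation the preorder on nouns given by the provable $\forall$-facts, the ``nonempty'' predicate picking out $p$ with $\Gamma \proves \exists(p,p)$, and the preorder/strict-order pair on nouns coming from the provable $\existsgeq$- and $\more$-facts. These structures are mutually compatible by (axiom), (barbara), (darii), (more-at least), (more-left), (more-right), and (card-trans); the rule (x) ensures that a provably nonempty noun is never forced to be contained in its own complement. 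Second, convert this qualitative data into a numerical assignment: a universe size $N$ and a nonnegative integer $n_p$ for each noun $p$, such that $n_p + n_{\pbar} = N$, every provable $\existsgeq(p,q)$ is realized by $n_p \geq n_q$, every provable $\more(p,q)$ by $n_p > n_q$, and every provable $\forall(p,q)$ by $n_p \leq n_q$ together with a compatible nesting of subsets. Third, realize this assignment as a concrete structure on $\{1,\ldots,N\}$ by choosing explicit subsets and then verify, one sentence at a time, that the construction satisfies $\Gamma$ and refutes $\phi$.

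The main obstacle is the second stage: the simultaneous solution of all cardinality, subset, and complement-sum constraints in a finite universe. The rules (card-mix), (card-anti), (more-anti), (strict half), and especially (maj) are precisely what prevent hidden incompatibilities between the ``at least half'' constraints, which couple each noun's size to that of its complement via $n_p + n_{\pbar} = N$, and the strict $\more$-constraints. Consistency of $\Gamma$ under the full rule set is exactly strong enough to rule out contradictory combinations, so one can choose $N$ sufficiently large, solve the resulting system of integer (in)equalities for the $n_p$, and carve out nested subsets of the desired sizes. Once the assignment exists, the geometric realization and the final sentence-by-sentence verification are routine bookkeeping; the reader is referred to~\cite{moss:forDunn} for the details.
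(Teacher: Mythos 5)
The paper itself gives no proof of Theorem~\ref{thm:finite}: it is stated purely as a citation to~\cite{moss:forDunn}, so there is nothing internal to compare your argument against, and deferring the hard work to that reference (as you do) matches the paper's treatment. Your soundness verifications for the Figure~\ref{figure-finite-setting-unsoundpart} rules on finite universes are the standard counting arguments and are correct, with one small slip in ({\sc maj}): from $\semantics{p}\cap\semantics{q}=\emptyset$ and both sets having size at least $\card{M}/2$ you should conclude $\semantics{p}\cup\semantics{q}=M$ and hence $\semantics{\pbar}\cap\semantics{\qbar}=\emptyset$, which is what contradicts $\exists(\pbar,\qbar)$; the identity $\semantics{\pbar}\cup\semantics{\qbar}=M$ that you wrote holds automatically whenever $\semantics{p}$ and $\semantics{q}$ are disjoint and contradicts nothing. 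Your completeness half is only a strategy outline: the decisive step --- showing that consistency of $\Gamma$ under the full rule set suffices to solve the coupled system of constraints $n_p+n_{\pbar}=N$, the $\existsgeq$/$\more$ inequalities, and the subset-nesting requirements --- is asserted rather than established, and that is precisely where the content of the cited completeness proof lives. Since the theorem is explicitly imported from~\cite{moss:forDunn}, this is acceptable here, but you should be clear that the second stage of your plan is the theorem, not a routine reduction.
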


\subsection{Consistent sets}
\label{section-consistent-sets}

A set of sentences in any logic  is \emph{consistent} if it is not the case that
$\Gamma\proves \phi$ for all $\phi$.   In our setting, this
is equivalent to saying that   there are no derivations from $\Gamma$ which use 
one of the two \emph{ex falso quodlibet} rules, ({\sc x}) or ({\sc x-card}).

\begin{ex}
    If $\Gamma$ is consistent and contains $\more(p,x)$, then $\Gamma\not\proves \more(q,\xbar)$ for all $q$.
    This follows from Example~\ref{first-contradiction}.
    Similarly,   Example~\ref{second-contradiction} shows that
    if $\Gamma$ is consistent and contains $\more(q,p)$, then $\Gamma\not\proves \existsgeq(p,\pbar)$.
\end{ex}

\begin{ex}  For all consistent $\Gamma$, there is some $x$ such that $\Gamma\not\proves\forall(x,\xbar)$.
    To see this, take any variable $x$.  Suppose that $\Gamma\proves\forall(x,\xbar)$.
    By $\mbox{\sc (non-empty)}$,  $\Gamma\proves\exists(\xbar,\xbar)$.
    If we also had 
    $\Gamma\proves\forall(\xbar,x)$, then $\Gamma$ would be inconsistent.
\end{ex}

\subsection{Architecture of the completeness result in this paper}
\label{section-architecture}

The main result in this paper is the Completeness Theorem~\ref{completeness}.
We shall show the following:  (1) For a given
set $\Gamma$ and sentence $\phi$,
if $\Gamma\not\proves \phi$,
then there is a model of $\Gamma$ where $\phi$ is false.

Before we prove this theorem, we show the weaker statement  (2) every consistent set $\Gamma$ has a model.
Let us explain why (2)  is weaker than (1).   If we take the special case of (1) when $\Gamma$ is $\exists(x,\xbar)$,
then we get:   if $\Gamma$ is consistent, then $\Gamma$ has a model.   So we basically get (2) as a special case of (1).
Now in logics with a rule of \emph{reductio ad absurdum} ({\sc raa}),  (2) implies (1).   Here is the argument.  
Suppose that $\Gamma\not\proves\phi$.  Then $\Gamma\cup\set{\nott\phi}$ is consistent.  (This is exactly where 
({\sc raa}) is used.  And us by (2), we have a model $\Model$ of $\Gamma$ where $\phi$ fails; 
so (1) holds.  Now our logic does not have  ({\sc raa}), since it complicates the proof search to add it.
(That is,  ({\sc raa}) is an admissible rule, as we shall prove following completeness.  But adopting  ({\sc raa}) from the start
would lead to a proof search algorithm that is more complicated that necessary.   As far as we can see, it would not be 
in polynomial time.   So the extra work that we shall do by using the ({\sc x}) rules leads to a more efficient proof search algorithm.

\section{Preliminaries}

In this section, we gather some preliminary background material which will be used later in the paper.

\subsection{Preliminary: listings of finite transitive relations}
A \emph{listing} of a set $X$  is
a sequence $x_1, \ldots, x_n$ from $X$ so that if $i\neq j$, then $x_i \neq x_j$.
Let $(T, < )$ be a finite set with a transitive, irreflexive relation.
A \emph{proper listing of $(T,<)$} is a listing of the set $T$ with the property that 
if $t_i  < t_j$, then $i < j$.
In words, the $<$-predecessors of each point are listed before it.
This is also called a \emph{topological sort}.

\begin{lemma} 
    Let $(T,<)$ be a finite set with a transitive, irreflexive relation.
    Then $(T,<)$ has a proper listing.
    \label{lemma-listing}
\end{lemma}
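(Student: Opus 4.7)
The plan is to proceed by induction on $|T|$. The base case $|T| = 0$ is vacuous (the empty sequence is a listing), and $|T| = 1$ is immediate. For the inductive step, I want to strip off a minimal element, list the rest by induction, and prepend the minimum to the resulting listing.

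The substantive step is to establish that every non-empty finite $(T,<)$ with a transitive and irreflexive $<$ admits a minimal element, that is, some $t\in T$ for which there is no $s\in T$ with $s<t$. To see this, pick any $t_0\in T$; if $t_0$ is minimal we are done, otherwise pick $t_1<t_0$, and continue. Transitivity gives $t_{k+1}<t_i$ for all $i\leq k$, and combined with irreflexivity this forces $t_{k+1}\neq t_i$ for each $i\leq k$. Thus the sequence consists of pairwise distinct elements of $T$, so by finiteness it must terminate, which can only happen at a minimal element.

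Now given a minimal element $t\in T$, apply the induction hypothesis to $(T\setminus\{t\},<\,\restriction\,(T\setminus\{t\}))$, which remains transitive and irreflexive, to obtain a proper listing $t_2,\ldots,t_n$. Prepend $t$ to get the sequence $t,t_2,\ldots,t_n$. This is still a listing (no repeats), and it is proper: for any $t_i<t_j$ with $2\leq i,j$, the inductive listing already arranges them correctly; and nothing $t_i$ in $T\setminus\{t\}$ satisfies $t_i<t$, by minimality of $t$, so no constraint involving the prepended $t$ is violated.

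The only potentially delicate point is extracting the minimal element, and the argument above handles it by using transitivity plus irreflexivity to rule out repetitions in the descending chain. No other obstacles are expected.
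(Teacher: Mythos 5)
Your proof is correct: the extraction of a minimal element via a strictly descending chain (with transitivity plus irreflexivity forcing the chain's terms to be pairwise distinct, hence forcing termination by finiteness) is exactly the standard argument, and the prepend-and-induct step is handled properly, including the check that the prepended minimal element violates no ordering constraint. The paper itself does not prove this lemma --- it labels it standard and defers to the earlier paper \cite{moss:forDunn} for a proof --- so your writeup simply supplies the expected standard argument; there is nothing to reconcile.
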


\begin{lemma}  Let $(T, < )$ be a finite set with a transitive, irreflexive relation.
    Let $y\in T$.   
    Then there is a proper listing of $(T,<) $ in which every  $x$  such that  $y \not\leq x$ comes before $y$ in the listing.
    \label{lemma-listing-refined}
\end{lemma}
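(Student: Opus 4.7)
The plan is to reduce the refined statement to the basic Lemma~\ref{lemma-listing} by splitting $T$ into three pieces according to their relation to $y$ and then listing the pieces in the right order.

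First, I would define the partition
\[
A \;=\; \{x \in T : y < x\}, \qquad C \;=\; \{y\}, \qquad B \;=\; T \setminus (A \cup C).
\]
Note that $B$ is exactly the set of $x \in T$ with $x \neq y$ and $y \not< x$, i.e.\ the set of $x$ with $y \not\leq x$, which matches the hypothesis of the lemma. Apply Lemma~\ref{lemma-listing} to the restrictions $(B, <\!\!\restriction_B)$ and $(A, <\!\!\restriction_A)$, which are still transitive and irreflexive, to obtain proper listings $b_1,\ldots,b_k$ of $B$ and $a_1,\ldots,a_\ell$ of $A$. Concatenate them as
\[
b_1, b_2, \ldots, b_k,\; y,\; a_1, a_2, \ldots, a_\ell.
\]
By construction every element of $B$ (i.e.\ every $x$ with $y \not\leq x$) appears before $y$.

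The only thing left is to verify that this concatenation is a proper listing of $(T,<)$. Given $t_i < t_j$, I would argue by cases on which block each of $t_i, t_j$ belongs to. Within a single block, properness is inherited from the sublistings. The crucial cases are the mixed ones, and here the key observation is the following \emph{upward closure} property of $A$: if $t_i \in A$ and $t_i < t_j$, then $y < t_i < t_j$, so $y < t_j$ by transitivity, hence $t_j \in A$. This rules out the problematic cases $t_i \in A, t_j \in B$ and $t_i \in A, t_j = y$ (the latter would give $y < y$, contradicting irreflexivity). Similarly, $t_i = y, t_j \in B$ is impossible because $y < t_j$ forces $t_j \in A$. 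The remaining mixed cases ($t_i \in B, t_j \in A$; $t_i \in B, t_j = y$; $t_i = y, t_j \in A$) are handled directly by the order of concatenation.

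There is no genuine obstacle here; the only thing to be careful about is the bookkeeping in the case analysis and the direction of the reflexive closure in the hypothesis ``$y \not\leq x$''. The heart of the argument is the transitivity-based upward closure of $A$, which is what permits us to postpone the elements strictly above $y$ until after $y$ itself has been listed, without breaking the topological-sort property.
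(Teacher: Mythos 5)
Your proof is correct. The paper itself does not prove Lemma~\ref{lemma-listing-refined}; it defers to the cited reference, so there is no in-paper argument to compare against. Your block decomposition $B, \{y\}, A$ with the upward-closure observation (if $y < t_i$ and $t_i < t_j$ then $y < t_j$, so $A$ is closed upward and nothing in $A$ lies below $y$ or below anything in $B$) is exactly the standard ``small refinement'' the authors allude to, and your case analysis covers all the mixed cases, so the argument is complete as written.
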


Lemma~\ref{lemma-listing} is standard, and Lemma~\ref{lemma-listing-refined} is small refinement.
One source for the proofs is~\cite{moss:forDunn}.

\subsection{Preliminary: unions and disjoint unions}
\label{section-unions-disjoint-unions}

Beginning in Section~\ref{section-combining},
we shall need to keep track of the difference between unions and disjoint unions.
Given sets $X_1$, $\ldots$, $X_n$, we write $X_1 + \cdots + X_n$ for
$$\displaystyle{\bigcup_{i=1}^n} \  (\set{i} \times X_i).
$$
We also write $\sum_{i=1}^n\,  X_i$ for this same set.
We make use of this notation 
even when some of the sets $X_i$ are themselves unions of other sets.
For example, 
$$
\begin{array}{lcl}
(X\cup Y) + Z & = & 
\set{(1,x) : x\in X} \cup 
\set{(1,y) : y\in Y} \cup 
\set{(2,z) : z\in Z} 
\end{array}
$$
\section{The relations $\leq$, $\leq_c$, $<_c$, $\leqmore$, and $\equiv$}
\label{section-orders}

The method of proof of (1) in Section~\ref{section-architecture} is \emph{model-construction}.
We fix a consistent set
$\Gamma$  throughout this section.   It is convenient to suppress
$\Gamma$ from the notation.  We also will adopt
suggestive notation for various assertions in the logic.

\begin{definition}
    Let $\Gamma$ be a (finite) set of sentences.
    We write $x\leq y$ for $\Gamma\proves \forall(x, y)$.
    Note that $\Gamma$ is left off the notation.
    And we write $x\equiv y$ for $x\leq y \leq x$.
    
    We write $x\leqc y$  for $\Gamma \proves \existsgeq(y,x)$.
    We also write $x\equivc y$ for $x\leqc y\leqc x$, 
    and $x <_c y$ for  $x\leq_c y$ but $x\not\equivc y$.
    
    Finally, we write $x\leqmorestrict y$ if $\Gamma\proves\more(y,x)$.
\end{definition}

\begin{proposition}
    Let $\vv$ be the   set of  variables in $\Gamma$,
    and let $w$, $x$, $x'$, $y$, $y'$, $z\in \vv$.
    \begin{enumerate}
        \item If $x\leq y$, then $x\leqc y$.
        \label{tth}
        \item  $(\vv,\leqc)$ is a preorder: a reflexive and transitive relation.
        \label{tthtth}
        \item 
        $(\vv,<_c)$ is   a strict preorder.
        
        \label{prewellorder}
        
        \item If $x\leqc y$, $x\equiv x'$, and $y\equiv y'$, then $x'\leqc y'$.
        \label{tthtthtthtth}
        
        \item If $w \leq_c x  <_{more} y \leq_c z$, then $w\leq_{more} z$.
        \label{morepart}
    \end{enumerate}
    \label{prop-leqc}
\end{proposition}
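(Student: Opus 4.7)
The plan is to verify each clause by a short syntactic derivation in the proof system, chaining together rules from Figure~\ref{figure-finite-setting-soundpart}; no semantic reasoning is required. The key rules I expect to invoke are (axiom), (subset-size), (card-trans), (more-left), and (more-right).

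For parts (\ref{tth})--(\ref{prewellorder}) the argument is essentially a translation of the definitions into rule applications. Part (\ref{tth}) is a single use of (subset-size): from $\forall(x,y)$ one derives $\existsgeq(y,x)$. For part (\ref{tthtth}), reflexivity of $\leqc$ comes from (axiom) followed by (subset-size), and transitivity is precisely the content of (card-trans). Part (\ref{prewellorder}) then follows from part (\ref{tthtth}): irreflexivity of $<_c$ is immediate from reflexivity of $\leqc$ and the definition (which requires $x\not\equivc y$). Transitivity of $<_c$ takes one further step: given $x <_c y <_c z$, part (\ref{tthtth}) supplies $x \leqc z$, and if additionally $z \leqc x$ held, then $y \leqc z \leqc x$ would give $y \leqc x$, contradicting $x <_c y$; hence $x \not\equivc z$ and therefore $x <_c z$.

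For part (\ref{tthtthtthtth}), I would first turn $x \equiv x'$ and $y \equiv y'$---each of which is really a pair of $\forall$-derivations---into the cardinality statements $x' \leqc x$ and $y \leqc y'$ by part (\ref{tth}). Chaining these with the given $x \leqc y$ via the transitivity of $\leqc$ from part (\ref{tthtth}) yields $x' \leqc x \leqc y \leqc y'$, and hence $x' \leqc y'$. Part (\ref{morepart}) is a similar two-step chain: from $\more(y,x)$ and $\existsgeq(x,w)$ the rule (more-left) produces $\more(y,w)$; then from $\existsgeq(z,y)$ and $\more(y,w)$ the rule (more-right) produces $\more(z,w)$, i.e.\ $w \leqmore z$.

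The only step calling for any actual thought is the transitivity in part (\ref{prewellorder}), since the rules in Figure~\ref{figure-finite-setting-soundpart} speak about $\leqc$ directly but not about $<_c$, so the strict part of the conclusion has to be extracted by the small contradiction argument sketched above. Everything else reduces to matching a hypothesis to the shape of a single rule.
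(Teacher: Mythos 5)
Your proposal is correct and follows essentially the same route as the paper: (subset-size) for part (1), (axiom) plus (card-trans) for part (2), the definition-chasing for parts (3) and (4), and the (more-left)/(more-right) chain for part (5). The paper's proof is just a terser version of the same argument; your expanded contradiction argument for the transitivity of $<_c$ fills in exactly what the paper leaves implicit.
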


\begin{proof}
    Part (\ref{tth}) uses the ({\sc subset-size}) rule.
    In part (\ref{tthtth}), the reflexivity of $\leqc$ comes from that of
    $\leq$ and part (\ref{tth}); the transitivity is by  ({\sc card-trans}).
    Part (\ref{prewellorder}) follows from the previous part.
    Part (\ref{tthtthtthtth}) uses part (\ref{tth}) and transitivity.
    Part (\ref{morepart}) uses ({\sc more-left}) and  ({\sc more-right}).
\end{proof}

\begin{remark} 
    Let us emphasize that there is a difference between $<_c$ and $<_{more}$.
    When we write $p <_c q$, we mean that 
    $$\begin{array}{l@{\quad}c@{\quad}l} \Gamma\proves\existsgeq(q,p) 
    & \mbox{and}  & \Gamma\not\proves\existsgeq(p,q).\\
    \end{array}
    $$   This is weaker than  $p <_{more} q$; recall that this last assertion means that 
    $\Gamma\proves\more(q,p)$. 
    For example, if $\Gamma$ contains just the sentence $\existsgeq(q,p)$ (and nothing else),
    then $p <_c q$ but not $p <_{more} q$.
\end{remark}

\newcommand{\pospos}{\mbox{\sf pos}}
\newcommand{\negneg}{\mbox{\sf neg}}
\newcommand{\smallsmall}{\mbox{\sf small}}
\newcommand{\largelarge}{\mbox{\sf large}}
\newcommand{\halfhalf}{\mbox{\sf half}}
\newcommand{\halforlarge}{\halfhalf\ \cup\ \largelarge}
\newcommand{\smallorhalf}{\smallsmall\,\cup\, \halfhalf}

\section{Small, large, and half}
\label{section-small-large-half}

Let $\Gamma$ be consistent in $\Sdagleq$. 
Define a partition of  variables of  $\Sdagleq$ into three classes, as follows:

\begin{enumerate}
    \item For all $p$ such that $p \leq_c \pbar\leq_c p$, put both $p$ and $\pbar$ into $\halfhalf$.
    \item For all $p$ such that $p \leqmorestrict q$ for some $q$, put $p\in\smallsmall$ and $\pbar\in \largelarge$.
    \item Put all other nouns in $\halfhalf$.
\end{enumerate}

We call this \emph{the standard partition of the nouns according to $\Gamma$}.

\begin{lemma}
    Let $\Gamma$ be consistent in $\Sdagleq$.
    The standard partition puts each $p$ and $\pbar$ into exactly one of the three class
    $\smallsmall$, $\halfhalf$, and $\largelarge$.  Moreover, this partition has the following properties:
    \begin{enumerate}
        \item[(i)]  If    ($p \leq_c \pbar$ and $\pbar\leq_c p$), then $p,\pbar\in \halfhalf$.
        \label{parti}
        \item[(ii)] If  $p \leqmorestrict q$ for some $q$, then $p\in\smallsmall$.
        \label{partii}
        \item[(iii)] $p \in \largelarge$ iff $\pbar\in\smallsmall$.
        \label{partiii}
        \item[(iv)] If $p\in \smallsmall$ and $q\leq_c p$, then $q\in \smallsmall$.
        \label{partiv}
        \item[(v)] If $p\leq_c \pbar$, then either ($p\in\smallsmall$ and $\pbar\in\largelarge$),
        or else  $p,\pbar\in\halfhalf$.  
        \label{partv}\
        \item[(vi)]   If $p\in \halfhalf$ and $q\leq_c p$, then either $q\in\smallsmall$ or $q\in\halfhalf$.
        \label{partvi}
        \item[(vii)]  If $\pbar\leq p$ (that is, $\Gamma\proves \forall(\pbar,p)$), then $\pbar\in\smallsmall$ and $p\in\largelarge$.
        
    \end{enumerate}
    \label{lemma-three-classes}
\end{lemma}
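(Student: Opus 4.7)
The plan is to verify each clause by translating the class-membership conditions back into provability claims about $\Gamma$ and then chaining rules of the proof system. Before tackling the individual items, I would establish the unstated preliminary that the three classes are pairwise disjoint, which is where consistency of $\Gamma$ is really used. The key fact is that $\smallsmall$ and $\largelarge$ cannot overlap: if $p\leqmorestrict s$ and $\pbar\leqmorestrict t$ both held, then ({\sc weak-more-anti}) applied to $\more(s,p)$ gives $\existsgeq(\pbar,t)$, which together with $\more(t,\pbar)$ triggers ({\sc x-card}) and inconsistency. The analogous calculation rules out simultaneously being in $\smallsmall$ and in $\halfhalf$ via condition~(1): $p\equivc\pbar$ with $\more(s,p)$ yields, via ({\sc weak-more-anti}), ({\sc card-trans}), and ({\sc more-at least}), the two sentences $\more(s,p)$ and $\existsgeq(p,s)$, which contradict via ({\sc x-card}).

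Parts (i)--(iii) and (vii) are essentially immediate once disjointness is in hand. (i) and (ii) are direct restatements of the defining conditions, and (iii) is just the observation that condition~(2) places $p$ and $\pbar$ in $\smallsmall$ and $\largelarge$ as a pair. For (vii), the rule ({\sc non-empty-more}) converts $\forall(\pbar,p)$ directly into $\more(p,\pbar)$, so $\pbar\leqmorestrict p$; (ii) and (iii) then place $\pbar\in\smallsmall$ and $p\in\largelarge$.

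The substantive clauses are (iv), (v), (vi). For (iv), given $p\in\smallsmall$ pick $s$ with $\more(s,p)$; then ({\sc more-left}) applied to $\more(s,p)$ and $\existsgeq(p,q)$ produces $\more(s,q)$, so $q\leqmorestrict s$ and (ii) gives $q\in\smallsmall$. For (v), case-split on whether $\pbar\leq_c p$ also holds: if yes, condition~(1) places both nouns in $\halfhalf$ directly; if no, one argues that neither $p$ nor $\pbar$ can satisfy condition~(2) without contradicting the hypothesis $p\leq_c\pbar$ via ({\sc weak-more-anti}) and ({\sc x-card}), so both fall into $\halfhalf$ via condition~(3). The main obstacle is (vi). The natural attack is by contradiction: assume $q\in\largelarge$, so $\qbar\leqmorestrict r$ and $\Gamma\proves\more(r,\qbar)$. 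Then ({\sc weak-more-anti}) gives $\existsgeq(q,x)$ for every $x$, and ({\sc card-trans}) with $\existsgeq(p,q)$ yields $\existsgeq(p,x)$ for every $x$, in particular $\existsgeq(p,r)$ and $\existsgeq(p,\pbar)$. ({\sc more-right}) then produces $\more(p,\qbar)$, so $\qbar\leqmorestrict p$. The delicate step is to propagate this back into a $\more(\,?\,,\pbar)$ sentence that would place $\pbar\in\smallsmall$ and hence, by (iii), force $p\in\largelarge$, contradicting $p\in\halfhalf$. I would attempt this by a further application of ({\sc more-left}) or ({\sc up}) combined with whichever of the two sub-cases of $p\in\halfhalf$ we are in (condition~(1) giving the extra symmetry $p\equivc\pbar$, condition~(3) requiring one to exploit the failure of any $\more(?,p)$ or $\more(?,\pbar)$ derivation), and I expect this interplay between $\more$ on one side and $\existsgeq$-maximality on the other to be the technical heart of the proof.
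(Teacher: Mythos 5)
Your architecture is the same as the paper's: first check that the clauses defining the partition cannot conflict (your two contradiction derivations are precisely the content of Examples~\ref{first-contradiction} and~\ref{second-contradiction}, which the paper cites for exactly this purpose), then read (i)--(iii) and (vii) off the construction, and obtain (iv) by pushing a $\more$ sentence through an $\existsgeq$ sentence (the paper cites Proposition~\ref{prop-leqc}; your ({\sc more-left}) derivation is the same fact). All of that is correct. Your (v), however, overclaims. In the sub-case $\nott(\pbar\leq_c p)$ you assert that \emph{neither} $p$ nor $\pbar$ can satisfy clause (2), so both land in $\halfhalf$. That is false for $p$: with $\Gamma=\set{\more(\pbar,p)}$ we have $p\leq_c\pbar$ and $\nott(\pbar\leq_c p)$, yet $p\in\smallsmall$ and $\pbar\in\largelarge$ --- the first disjunct of (v), which your argument would wrongly exclude. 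All that (v) requires is $\pbar\notin\smallsmall$, and the half of your argument addressed to $\pbar$ does give this (if $\more(r,\pbar)$ were provable, ({\sc weak-more-anti}) would yield $\existsgeq(p,\pbar)$, i.e.\ $\pbar\leq_c p$, contradicting the sub-case); the paper instead gets it in one line from (iii) and (iv).

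The genuine gap is (vi), which you explicitly leave open (``I would attempt\ldots I expect\ldots''). Your derived consequences are all correct ($\existsgeq(q,x)$ for all $x$, hence $\existsgeq(p,x)$ for all $x$, and $\more(p,\qbar)$), but the further step you are hunting for --- a derivation of $\more(s,p)$ or $\more(s,\pbar)$ that would eject $p$ from $\halfhalf$ --- does not exist. The configuration you are trying to refute is realizable: $\Gamma=\set{\existsgeq(p,\pbar),\ \existsgeq(\pbar,p),\ \existsgeq(p,q),\ \more(q,\qbar)}$ is consistent (model: $M=\omega$, $\semantics{p}$ the evens, $\semantics{q}=\omega$), and its standard partition has $p\in\halfhalf$ by clause (1), $q\in\largelarge$ by clause (2) applied to $\qbar$, and $q\leq_c p$. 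So no chain of rules will close your argument. The paper's own proof of (vi) does not fare better: it invokes ``$p\leq_c\pbar$'' without justification (this holds when $p$ enters $\halfhalf$ via clause (1) but not via the catch-all clause (3)), and it then declares ``$p\equiv_c q$ with $q\in\largelarge$'' to be a contradiction, which the example above shows it is not. Your instinct that this was the technical heart was right, but the obstruction is in the statement of (vi) itself, not merely in your proof of it.
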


\begin{proof}
    The main thing is to check that the first two points in the definition of the standard partition cannot conflict.
    Here are the verifications.   If $p \leq_c \pbar\leq_c p$, then we cannot also have $p \leqmorestrict q$.
    (See Example~\ref{second-contradiction}.)
    We cannot have both $p \leqmorestrict q$ and $\pbar \leqmorestrict x$.
    (See Example~\ref{first-contradiction}.)
    
    Parts (i) -- (iii) are immediate from the construction.
    Part (iv) follows from Proposition~\ref{prop-leqc}.
    
    Then, we check that  (v) holds.  Suppose that $p\leq_c \pbar$ but that neither of the conditions in (v) hold. 
    By (iii), $\pbar\in\smallsmall$ and $p\in\largelarge$.  But then by (iv), 
    $p\in\smallsmall$.  And this is a contradiction.
    
    For (vi), suppose that $p\in \halfhalf$ and $q\leq_c p$, and towards a contradiction that $q\in\largelarge$.
    Then $\pbar\leq_c q$.  But also $p\leq_c \pbar$, so we have $p\equiv_c q$.   And this is a contradiction, since $q\in\largelarge$.
    (So again we are using Proposition~\ref{prop-leqc}.)
    
    Part  (vii) follows immediately from   ({\sc non-empty-more}).
\end{proof}

\subsection{Refinement}
\label{section-refine-three-classes}

We also need a refined version of the standard partition.   Let $\Gamma$ be consistent.  Fix a noun $p^*$.
Suppose that $\nott(\pbar^*\leq_c p^*)$, and also that there are no $x$ such that $\pbar^* \leqmore x$.
Then the standard partition might put $p^*\in\halfhalf$ (due to the final step in the definition of the standard partition),
and it might put $p^*\in\smallsmall$.  But it cannot put $p^*\in\largelarge$.
In case it puts $p^*\in\halfhalf$, we might want a modification which instead puts $p^*\in\smallsmall$.  

Therefore we modify the standard partition, as follows:

\begin{enumerate}
    \item For all $p$ such that $p\leq_c p^*$, put $p\in\smallsmall$ and $\pbar$ in $\largelarge$.
    \item For all $p$ such that $p \leq_c \pbar\leq_c p$, put both $p$ and $\pbar$ into $\halfhalf$.
    \item For all $p$ such that $p \leqmorestrict q$ for some $q$, put $p\in\smallsmall$ and $\pbar\in \largelarge$.
    \item Put all other nouns in $\halfhalf$.
\end{enumerate}

\begin{lemma}
    Let $\Gamma$ be consistent in $\Sdagleq$.
    Let $p^*$ be such that $\nott(\pbar^*\leq_c p^*)$, and also that there are no $x$ such that $\pbar^* \leqmore x$.
    Then the refined version of the standard partition puts
    $p^*\in\smallsmall$, and it also puts
    each of $p$ and $\pbar$ into exactly one of the three classes
    $\smallsmall$, $\halfhalf$, and $\largelarge$.  Moreover, 
    all of the points in Lemma~\ref{lemma-three-classes} hold.
    \label{lemma-three-classes-refined}
\end{lemma}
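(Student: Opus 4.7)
The plan is to follow the structure of the proof of Lemma~\ref{lemma-three-classes}, taking care to handle the new Step~1 of the refined partition. The two hypotheses on $p^*$---namely $\nott(\pbar^*\leq_c p^*)$ and the nonexistence of $x$ with $\pbar^*\leqmore x$---are designed precisely to block conflicts between the new Step~1 and the remaining steps.

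First I would verify that $p^*$ itself is placed uniquely in $\smallsmall$. Reflexivity of $\leq_c$ fires Step~1 on $p^*$. For uniqueness, Step~2 would require $p^*\equiv_c\pbar^*$, contradicting the first hypothesis; Step~3 applied to $\pbar^*$ would require $\pbar^*\leqmore x$ for some $x$, contradicting the second; and Step~4 only fires on nouns not already placed.

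The main work is well-definedness of the whole refined partition. Conflicts internal to Steps~2, 3, and~4 were already handled in the proof of Lemma~\ref{lemma-three-classes} (via Examples~\ref{first-contradiction} and~\ref{second-contradiction}), so only the new interactions with Step~1 need attention, and there are three. For Step~1 versus Step~2: if $p\leq_c p^*$ and $p\equiv_c\pbar$, then ({\sc card-trans}) gives $\Gamma\proves\existsgeq(p^*,\pbar)$, and then ({\sc up}) yields $\Gamma\proves\existsgeq(p^*,\pbar^*)$, contradicting the first hypothesis on $p^*$. For Step~1 applied to both $p$ and $\pbar$: from $p\leq_c p^*$ and $\pbar\leq_c p^*$, ({\sc up}) directly yields the same contradiction. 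For Step~1 on $p$ versus Step~3 on $\pbar$: from $\pbar\leqmore r$, rule ({\sc weak-more-anti}) yields $\pbar^*\leq_c p$, which combined with $p\leq_c p^*$ gives $\pbar^*\leq_c p^*$, again the contradiction.

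Finally I would re-verify properties (i)--(vii). Property (i) holds because if $p\equiv_c\pbar$ then the previous paragraph forces $p\not\leq_c p^*$, so Step~1 does not fire on $p$ and Step~2 places both $p$ and $\pbar$ in $\halfhalf$. Properties (ii) and (iii) are immediate from the construction. Property (iv) uses transitivity of $\leq_c$: if $p\in\smallsmall$ via Step~1 then $p\leq_c p^*$, so any $q\leq_c p$ satisfies $q\leq_c p^*$ and is placed in $\smallsmall$; the Step~3 case is as before. Properties (v), (vi), and (vii) carry over essentially as in Lemma~\ref{lemma-three-classes}, since the well-definedness verification above shows that Step~1 cannot disrupt them. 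The principal obstacle throughout is the case analysis for well-definedness; once that is handled through careful use of ({\sc up}), ({\sc weak-more-anti}), and ({\sc card-trans}), the remaining verifications are routine.
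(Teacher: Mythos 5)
Your proposal is correct and follows essentially the same approach as the paper: verify that the new Step~1 cannot conflict with itself on a complementary pair (via ({\sc up})), with Step~2 (via ({\sc card-trans}) and ({\sc up}), i.e.\ Example~\ref{example-halfstar}), or with Step~3, and then note that (i)--(vii) carry over. The only cosmetic difference is in the Step~1 vs.\ Step~3 case, where the paper derives $\pbar^*\leqmore q$ via Example~\ref{ex-Top1} to contradict the second hypothesis, while you use ({\sc weak-more-anti}) and transitivity to derive $\pbar^*\leq_c p^*$ and contradict the first; both are valid.
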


\begin{proof}
    This time, we must check that the first three points in the definition of the standard partition cannot conflict.
    Since there are no conflicts between (2) and (3), we only need to check conflicts between (1) for  a noun $p$ and its complement
    $\pbar$, conflicts between (1) and (2), and finally conflicts between (1) and (3)
    
    For all $p$, we cannot have $p\leq_c p^*$ and also $\pbar\leq_c p^*$.    This is by ({\sc up}), taking $x$ to be $p^*$.
    
    Concerning conflicts between (1) and (2), suppose that $p\leq_c\pbar \leq_c p$ and also that $p \leq_c p^*$.
    Then by Example~\ref{example-halfstar}, we have $\pbar^*\leq_c p^*$, and this contradicts our assumption on $p^*$ in this result.
    
    As for conflicts between (1) and (3), suppose that $\pbar \leqmore q$ and $p\leq_c p^*$.   
    Then by Example~\ref{ex-Top1}, $\pbar^* \leqmore q$.  So again we have a contradiction.
    
    The verification of (i) -- (vii) is nearly the same as what we saw in Lemma~\ref{lemma-three-classes}.
    That is, (i)--(iv) are immediate from the construction, and (v) and (vi) follow easily from these.
\end{proof}

The work in this section is used in Section~\ref{section52}.

\rem{
    \begin{proof}
        We first give an algorithm which takes a list  $p_0, \ldots, p_{n}$ of raw variables and produces the partition
        $\smallsmall$, $\halfhalf$, and $\largelarge$  of $\set{p_j, \pbar_j : 0\leq  j \leq  n}$.
        Following the algorithm, we prove that the resulting partition has properties (i) - (iv).
        We start with $\smallsmall$, $\halfhalf$, and $\largelarge$ all $\emptyset$.
        At step $j$, we
        have a partition  $\smallsmall$, $\halfhalf$, and $\largelarge$ of $\set{p_i, \pbar_i : i < j}$,
        and we
        decide where to put $p_j$ and $\pbar_j$.

        To save on some notation\footnote{This might not be a good idea.  In that case, we could easily change
            $q$ back to $p_j$ in what follows.}, let us write $q$  for $p_j$, and thus $\qbar$ for $\pbar_j$.
        Let us also use $p_i$ as a variable ranging over $\set{p_0,\ldots, p_{j-1}}$,\footnote{Importantly, I think we need to change this to
            say that $p_i$ ranges over $\set{p_0, \pbar_0,\ldots, p_{j-1}, \pbar_{j-1}}$.   We need this at the end of Lemma~\ref{lemma-three-classes}.}
        and also $x$  and $y$ as  variables ranging over all of the nouns in the language.   At step $j$, we 
        we need to see where $q$ and $\qbar$ belong.
        \begin{itemize}
            \item[(a)]
            If $q\leq_c \qbar \leq_c q$, then put both $q$ and $\qbar$ in $\halfhalf$.
            \item[(b)] Otherwise,
            if $q\leqmorestrict x$ for some $x$, put $q\in \smallsmall$ and $\qbar$ in $\largelarge$.
            And if $\qbar\leqmorestrict x$ some $x$, put $\qbar\in \smallsmall$ and $q$ in $\largelarge$.
            \item[(c)]  Otherwise,  if
            there is some $p\in\smallsmall$ such that $q\leq_c p$, then put $q\in \smallsmall$
            and $\qbar\in\largelarge$.   
            And if 
            there is some $p\in\smallsmall$ such that $\qbar\leq_c p$, then put $\qbar\in \smallsmall$
            and $q\in\largelarge$.   
            \item[(d)]   Otherwise, if $q\leq_c \qbar$, either put  $q\in \smallsmall$
            and $\qbar\in\largelarge$, or else put both $q,\qbar\in\halfhalf$.
            And if $\qbar\leq_c q$, either put   $\qbar\in \smallsmall$ and $q$ in $\largelarge$, or else
            put both $q,\qbar\in\halfhalf$.
            \item[(e)]  
            In all other cases, make one of the following choices:
            put  $q\in\smallsmall$ and $\qbar\in\largelarge$; 
            put $\qbar\in \smallsmall$
            and $q \in \largelarge$;
            or put $q,\qbar\in\halfhalf$.
        \end{itemize}
        This concludes the definition of our partition of all the nouns in the language into three sets.
        It remains to show that this works.

        Here is what we check by induction on $j$:
        \begin{enumerate}
            \item[(i)] 
            If    ($p_j \leq_c \pbar_j$ and $\pbar_j\leq_c p_j$), then $p_j, \pbar_j\in \halfhalf$.
            \item[(ii)] If  $p_j \leqmorestrict x$ some $x$, then $p_j\in\smallsmall$.   And if $\pbar_j \leqmorestrict x$ some
            $x$, then $\pbar_j\in\smallsmall$. 
            \item[(iii)] $p_j \in \largelarge$ iff $\pbar_j\in\smallsmall$.  And $\pbar_j\in\largelarge$ iff $p_j\in\smallsmall$.
            \item[(iv)] Suppose that $p_j\in \smallsmall$ and that  $i\leq j$.
            If 
            $p_i\leq_c p_j$, then $p_i\in \smallsmall$.
            And if $\pbar_i\leq_c p_j$, then $\pbar_i\in \smallsmall$.
        \end{enumerate}
        
        Fix $j$ and assume that we have 
        verified (i)- (iv) for  all $i  < j$ for the partitions 
        $\smallsmall$, $\halfhalf$, and $\largelarge$ of 
        $\set{p_i, \pbar_k : k  <  i}$.    We  then verify (i)- (iv) for for the partition of 
        $\set{p_i, \pbar_k : k  <  j}$.

        Let us check that (i) - (iv) for $j$ imply (v) --(vii) for $j$.
        Here is the argument for (v).
        Suppose that $p_j\leq_c \pbar_j$ but that $p_j\in \largelarge$.
        Then $\pbar_j\in\smallsmall$, by (iii).  But then $p_j\in\smallsmall$, by (iv).
        For part (vi), suppose that $\forall(p,\pbar)$.    If we put $p\in\halfhalf$, then $\Gamma\proves \forall(y,\ybar)$ for all $y$.
        And this contradicts one of the assumptions in this lemma.
        Similarly, if $p\in\largelarge$, then $\pbar\in\smallsmall$.   But $p\leq_c \pbar$, and so $p\in \smallsmall$ by part~\ref{partiv}.  \marginpar{the part numbers are off}
        And this is a contradiction.

        Here is the argument when we use (a).
        (i), (iii), and (iv)  are trivial.
        As for (ii), if $q <_{more} r$, then since $\qbar \leq_c q$, we see that  $r \leq_c q$.
        This is a   contradiction.
        
        We turn to (b).    Suppose that $q$ is declared to be in $\smallsmall$ because $q <_{more} x$.
        Again, (i) and (iii) are trivial.
        For (ii), the result is trivial for $q$.   For $\qbar$, 
        suppose that $\qbar <_{more} y$.  Then $\Gamma$ is essentially finite;
        again, this is a contradiction.
        For (iv), we need only consider $q$ (not $\qbar$).
        Suppose that
        we put $q\in\smallsmall$ because $q <_{more} x$.
        Let  $p_i \leq_c q$.   Then $p_i <_{more} x$.   So $p_i\in\smallsmall$, too.

        We continue with (c).
        Let $p_i\in\smallsmall$ be such that $q \leq_c p_i$.
        The easy parts this time are (i) and (iii). 
        In (ii), suppose that $\qbar <_{more} x$.
        Then $\pbar_i \leq_c \qbar <_{more} x$.   And this puts $\pbar_i\in\smallsmall$.
        So we have a contradiction to (iii) for $p_i$.
        In (iv), let us check that if  $k < j$ and $p_k\leq_c q$, then $p_k\in\smallsmall$.
        The reason is that $p_k \leq_c  q \leq_c p_i$.  If $i \leq k$, we are done by (iv) for $p_k$.
        Suppose that $k < i$,   and towards a contradiction suppose that $p_k\in\halfhalf$ or $p_k$ in $\largelarge$.
        Then $\pbar_i \leq_c \pbar_k$.    
        If $p_k\in\largelarge$, then $\pbar_k\in\smallsmall$, and so $\pbar_i\in\smallsmall$, too, by (iv) for $i$.
        This contradicts (iii) for $i$.   
        And if $p_k\in\halfhalf$, then $\pbar_i \leq_c \pbar_k  \equiv_c p_k \leq_c p_i$.  So $\pbar_i\in\smallsmall$, by (iv) for $i$.
        This again is a contradiction, since $\pbar_i \in\largelarge$ by (iii) for $i$. 
        
        Let us turn to (d).
        Suppose that $q \leq_c \qbar$ but that none of (a)--(c) apply.
        Then 
        (i) and (iii) are  trivial.
        For (ii), suppose that $\qbar <_{more} x$.  Then we contract non-essential finiteness.
        For (iv), suppose that $p_i\in\smallsmall$ and $\qbar \leq_c p_i$.   
        Then $\pbar_i \leq_c q \leq_c \qbar \leq_c p_i$.
        By (v) for $\pbar_i$, we see that $\pbar_i$ belongs to $\smallsmall$ or to $\halfhalf$.
        And then from (i) and (iii), $p_i$ belongs to $\halfhalf$ or to $\largelarge$.
        This is a contradiction to $p_i \in\smallsmall$.
        
        We conclude with (e).
        Suppose that none of (a)--(d) apply.
        Then (i) and (iii) are trivial. 
        For (ii), suppose that $\qbar <_{more} p_i$.
        Then we would have been in the case (b).
        Here is the argument for (iv).
        Suppose that $p_i \leq_c q$.  
        If $p_i\in \halfhalf$, then we have $\qbar\leq_c q$ by Example~\ref{example-halfstar}.
        And this means that we would not be in case (e); we would have been in the case (d).
        It remains to get a contradiction from $p_i\in\largelarge$.   In this case, we would have $\pbar_i \in\smallsmall$.
        And also, $\qbar \leq_c \pbar_i$.     So we would be in case (c), rather than case (e).

        This concludes the proof. \end{proof}
}

\subsection{A result on infinite models and the logic of $\mbox{\em All}$, $\mbox{\em Some}$, and complemented variables}

\begin{lemma}\label{lemma-exactly-kappa}
    Let $\kappa$ be an infinite cardinal.
    Let $\Delta$ be a finite consistent set
of sentences of the form  $\exists(p,q)$ and $\forall(p,q)$.
Then $\Delta$ has a model
    with the following properties:
    \begin{enumerate}
        \item All nouns $p$  
        have the property that either $\semantics{p} = \emptyset$,  
        or $\semantics{\pbar} = \emptyset$, or 
        both $\semantics{p}$ and $\semantics{\pbar}$
        are   sets of cardinality exactly
        $\kappa$.     
        \item For all $p$ and $q$, $\semantics{p}\cap\semantics{q} \neq \emptyset$ iff $\Gamma\proves\exists(p,q)$.
        (A special case  is when $p = q$,  We see that  $\semantics{p} = \emptyset$ iff 
        $\Gamma\not\proves \exists(p,p)$.)
    \end{enumerate}
\end{lemma}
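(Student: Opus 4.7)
The plan is two-stage: first build a finite ``canonical'' model $\Model_0$ of $\Delta$ that reflects the intended right-hand side of property~(2), and then inflate $\Model_0$ to cardinality $\kappa$ by blowing up each realised type into a disjoint set of size $\kappa$. Since $\Delta$ contains only $\forall$- and $\exists$-sentences, the finite-model step is governed by completeness of the $\Sdag$-fragment, a classical result for the complemented syllogistic.

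For the inflation, let $\mathcal{T}$ be the finite collection of \emph{types} realised in $\Model_0$, where the type of an element $m\in M_0$ is $\{r : m \in \semantics{r}\}$ (a complete choice of one of $x$ or $\xbar$ for each variable $x$ in $\Delta$). For each $T\in\mathcal{T}$ pick a fresh set $E_T$ of cardinality $\kappa$, pairwise disjoint, and set
$$M \;=\; \bigsqcup_{T\in\mathcal{T}} E_T, \qquad \semantics{x} \;=\; \bigsqcup\{E_T : x \in T\}, \qquad \semantics{\xbar} \;=\; M\setminus\semantics{x}.$$
Since $\mathcal{T}$ is finite and $\kappa$ is infinite, $|M|=\kappa$. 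For each noun $p$, either no realised type contains $p$ (so $\semantics{p}=\emptyset$) or at least one does (so $|\semantics{p}|=\kappa$); this gives~(1). Property~(2) transfers from $\Model_0$, because $\semantics{p}\cap\semantics{q}\neq\emptyset$ in $\Model$ iff some $T\in\mathcal{T}$ contains both $p$ and $q$, which is the same condition as in $\Model_0$. A degenerate case in which $\mathcal{T}=\emptyset$ is handled by seeding the construction with one $\forall$-closed complete type, which exists because $\Delta$ is consistent.

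Building $\Model_0$ is the technical core. For each pair of literals $(p,q)$ that ought to appear as an $\exists$-witness in $\Model_0$, I would extract a complete type $T_{p,q}$ containing $p$ and $q$, closed under the $\forall$-sentences of $\Delta$, by picking a suitable model of $\Delta$ (supplied by $\Sdag$-completeness) and reading off the type of a witness to $\exists(p,q)$ there. Collating the distinct $T_{p,q}$ yields $\Model_0$. The \emph{main obstacle} is to select these types so that $\Model_0$ does not inadvertently realise $\exists$-pairs outside the intended family from~(2); a naive greedy literal-by-literal extension of $\{p,q\}$ can get stuck, because the semantic disjunction ``in every model of $\Delta$, either $\exists(y,s_1)$ or $\exists(\ybar,s_2)$ holds'' need not force $\Delta$ to prove either disjunct alone. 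I expect to overcome this with a Henkin/Lindenbaum-style choice of the ambient models from which the $T_{p,q}$ are extracted, using the rules of Figure~\ref{figure-finite-setting-soundpart} (notably ({\sc darii}) and ({\sc subset-size})) to verify that no unintended existentials become realised.
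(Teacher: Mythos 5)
Your two-stage architecture --- first a finite model realizing exactly the provable existentials, then inflating each point (equivalently, each realized type) to a set of size $\kappa$ --- is exactly the paper's. The inflation half is correct and matches the paper's verification: the blow-up preserves all $\forall$- and $\exists$-sentences, every noun's interpretation is empty or of size exactly $\kappa$, and property (2) transfers because the $E_T$ are pairwise disjoint.

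The gap is in the first stage, and you have named it yourself. What you need is: a finite consistent set $\Delta$ of $\forall$/$\exists$ sentences over complemented variables has a finite model $\Model_0$ in which $\semantics{p}\cap\semantics{q}\neq\emptyset$ iff $\Delta\proves\exists(p,q)$. This is precisely the completeness theorem for the fragment $\Sdag$, in its standard strengthened form where no unprovable existentials are realized. The paper does not reprove it; it cites \cite{logic:moss08} and moves on. You instead sketch a type-extraction argument, correctly observe the real obstacle --- a greedy literal-by-literal completion of $\set{p,q}$ under the $\forall$-sentences can get stuck or realize unintended existentials, since $\Delta$ need not prove either disjunct of a semantically forced disjunction --- and then say you ``expect to overcome'' it with a Henkin/Lindenbaum-style choice. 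But that expectation \emph{is} the entire content of the hard step: controlling which types are realized while keeping each type closed under $\leq$ and consistent with complementation is exactly what the $\Sdag$ completeness proof has to do (compare the role that Lemma~\ref{lemma-use-zorn} plays for the same difficulty in the infinite setting). As written, the technical core is asserted, not proven. The fix is cheap --- either cite the known completeness theorem for $\Sdag$ with the ``minimal existentials'' property, as the paper does, or actually carry out the canonical-model construction --- but one of the two must be done before the lemma is established.
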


\begin{proof}
Since  our logic includes a complete syllogistic logic of $\exists$ and $\forall$, $\Delta$ is consistent in the
syllogistic logic of sentences $\forall(p,q)$, and $\exists(p,q)$.
By~\cite{logic:moss08}, $\Delta$ has a finite  model $\Model$
with the extra property that if $\Delta\not\proves \exists(p,q)$, then $\semantics{p}\cap\semantics{q}  = \emptyset$.
    Fix such a model $\Model$.
    Let $p_1$, $\pbar_1$, $\ldots$, $p_k$, $\pbar_k$ include the finite set of nouns that occur in sentences in $\Delta$.
    (Since 
    we are only dealing with finite sets of sentences, this is a finite set.)
    Let the universe $M$ of the model $\Model$ be $\set{1, \ldots, m}$.
    Let 
    $A_1$, $\ldots$, $A_k$ be a pairwise disjoint family of sets of size exactly $\kappa$.
    (For example, we might have $A_i= \kappa \times \set{i}$.)
    Then make a new model $\Nodel$ by 
    taking the universe $N$ to be $\bigcup \set{A_i  : 1 \leq i \leq k}$, and then by
    interpreting each  $p$ thus: 
    $$\begin{array}{lcl}
    \semantics{p}_{\Nodel} &  = & \bigcup \set{A_i: i\in \semantics{p}_{\Model}}.
    \end{array}
    $$
    It then follows that 
    $$\begin{array}{lcl}
    \semantics{\pbar}_{\Nodel} &  = & \bigcup \set{A_i: i\notin \semantics{p}_{\Model}}.
    \end{array}
    $$
    The assertions about  $\semantics{p}$ and $\semantics{\pbar}$ in our result are easy to check.
    
    \begin{claim}
        $\Model$ and $\Nodel$ agree on all sentences using $\forall$ and $\exists$.
    \end{claim}
    
    \begin{proof}
        Consider first a sentence $\forall(a,b)$.
        Assume that $\Model\models\forall(a,b)$.  So $\semantics{a}_\Model \subseteq \semantics{b}_\Model$.
        Let $\alpha\in \semantics{a}_\Nodel$.  So for some $i\in \semantics{a}_\Model$, 
        $\alpha\in A_i$.  But this $i$ belongs also to $\semantics{b}_\Model$.
        And therefore $\alpha\in \semantics{b}_\Nodel$.
        This for all $\alpha$ shows that $\semantics{a}_\Nodel \subseteq \semantics{b}_\Nodel$.
        In the other direction, assume that
        $\semantics{a}_\Nodel \subseteq \semantics{b}_\Nodel$.
        Let $i\in \semantics{a}_\Model$.   Since $A_i$ is infinite, it is non-empty.
        Let $\alpha\in A_i$.   Thus $\alpha\in \semantics{a}_\Nodel$.  By our assumption,
        $\alpha\in \semantics{b}_\Nodel$.    And since the sets $A_j$ are pairwise disjoint,
        we see that $i\in \semantics{b}_\Model$.   This for all $i$ shows that 
        $\semantics{a}_\Model \subseteq \semantics{b}_\Model$.
        
        We turn to the sentences $\exists(a,b)$.
        Assume that $\Model\models\exists(a,b)$, and let $i\in \semantics{a}_\Model \cap  \semantics{b}_\Model$.
        Let $\alpha\in A_i$.  Then $\alpha \in \semantics{a}_\Nodel \cap  \semantics{b}_\Nodel$.
        So $\Nodel\models \exists(a,b)$.
        Going the other way, assume that 
        $\Nodel\models \exists(a,b)$.   Let $\alpha\in  \semantics{a}_\Nodel \cap  \semantics{b}_\Nodel$.
        We have  $i\in \semantics{a}_\Model$ and $j\in \semantics{b}_\Model$
        so that $\alpha \in A_i \cap A_j$.
        But since the $A$'s are pairwise disjoint, we have $i = j$.
        Then $i \in  \semantics{a}_\Model \cap  \semantics{b}_\Model$.
        And $i$ shows us that $\Model\models \exists(a,b)$, as desired.
    \end{proof}
    
    By this claim, $\Nodel\models\Delta$.  Finally, 
    suppose that $\Gamma\not\proves\exists(p,q)$.
    Then in $\Model$, $\semantics{p}\cap\semantics{q} = \emptyset$.
    It follows that the same is true in $\Nodel$.
    Finally, suppose that $\semantics{p}_{\Nodel} \neq\emptyset$.
    As we have just seen, this means that $\Gamma\proves\exists(p,p)$.

    Finally,
    each interpretation $\semantics{p}_{\Nodel}$ is either empty or has size $\kappa$, by construction.
\end{proof}

\section{Consistent sets have models}
\label{section-model-construction}

\begin{theorem}  \label{completeness-weak}
    Every consistent (finite) set  $\Gamma$  has a model $\Model$.
    Moreover, given a partition of the nouns into three classes as in Lemma~\ref{lemma-three-classes},
    we can find a model $\Model$ with the following additional properties:
    \begin{enumerate}
        \item If $p, q\in\smallorhalf$ and $\semantics{p}\cap\semantics{q} \neq \emptyset$, then $\Gamma\proves \exists(p,q)$.
        \label{first-addendum}
        \item If $\semantics{p} \neq\emptyset$, then $\semantics{p}$ is infinite.
    \end{enumerate}
\end{theorem}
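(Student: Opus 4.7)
The proof will be by model construction. The plan is to start from the syllogistic completeness result of Lemma~\ref{lemma-exactly-kappa} applied to the $\forall/\exists$ fragment of $\Gamma$, and then to rescale the resulting atoms to realize the cardinality sentences of $\Gamma$ in accordance with the given partition.

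First, by Proposition~\ref{prop-leqc}, $\leq_c$ descends to a strict partial order on the finite set of $\equiv_c$-equivalence classes of the nouns in $\Gamma$. Use Lemma~\ref{lemma-listing} to list these classes in an order compatible with $<_c$, and assign to the $\smallsmall$-classes a strictly increasing chain of distinct infinite cardinals $\lambda_1 < \lambda_2 < \cdots$. Fix an infinite cardinal $\kappa$ strictly above all of them, and put $f(p) = \lambda_i$ for $p$ in the $i$-th $\smallsmall$-class and $f(p) = \kappa$ for $p \in \halforlarge$. Using parts (iv) and (vi) of Lemma~\ref{lemma-three-classes}, one checks that $f$ is monotone: $p \leq_c q$ implies $f(p) \leq f(q)$, and $p <_{more} q$ implies $f(p) < f(q)$.

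Second, apply Lemma~\ref{lemma-exactly-kappa} at the cardinal $\kappa$ to the $\forall/\exists$-part of $\Gamma$, obtaining a model $\Nodel$ whose universe is a disjoint union of atoms $A_\alpha$ of size exactly $\kappa$, indexed by the realized types $\alpha$. For each such atom set $c(\alpha) = \min \{ f(p) : p \in \alpha \}$, and replace $A_\alpha$ by any subset $B_\alpha \subseteq A_\alpha$ of size $c(\alpha)$. Let $\Model$ be the resulting model, with universe $\bigcup_\alpha B_\alpha$ and $\semantics{p}_\Model = \bigcup \{ B_\alpha : p \in \alpha \}$. Because each $B_\alpha$ is infinite (hence nonempty) and the $B_\alpha$ remain pairwise disjoint, $\Model$ and $\Nodel$ agree on every $\forall$- and $\exists$-sentence, so the additional property (1) is inherited from the corresponding property of $\Nodel$ guaranteed by Lemma~\ref{lemma-exactly-kappa}, and (2) holds by the choice of infinite $c(\alpha)$.

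The main obstacle is verifying the cardinality sentences. For each $\existsgeq(q,p)$ and $\more(q,p)$ in $\Gamma$ we must show $|\semantics{q}_\Model| \geq |\semantics{p}_\Model|$ (resp.\ strict). Since $|\semantics{p}_\Model| = \max_{\alpha \in p} c(\alpha) \leq f(p)$, the monotonicity of $f$ gives the weak inequalities provided the relevant target cardinals are actually realized. Concretely, for each $p$ appearing on the large side of such a sentence, one must exhibit a realized atom $\alpha \in p$ with $c(\alpha) = f(p)$, equivalently an atom in $p$ whose positive column avoids every noun of strictly smaller $f$-value: for $p \in \halforlarge$ an atom whose entire positive column lies in $\halforlarge$, and for $p \in \smallsmall$ an atom in $p$ whose positive column contains no $\smallsmall$-noun of smaller cardinality than $p$. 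The existence of the corresponding consistent types must be argued from consistency of $\Gamma$ and parts (iv)--(vi) of Lemma~\ref{lemma-three-classes}, which rule out $\forall$-obstructions (any $\forall(r,s)$ from the $\Gamma$-theory pulls $s$ into the same or larger $f$-class as $r$). Where such an atom is absent from $\Nodel$, it will have to be inserted as a fresh witness, placed at a type chosen carefully enough that the new intersections it creates among $\smallorhalf$-nouns are all already derivable from $\Gamma$, thereby preserving property (1).
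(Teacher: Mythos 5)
Your overall strategy --- get the $\forall/\exists$ fragment right first via Lemma~\ref{lemma-exactly-kappa} and then rescale to meet the cardinality sentences --- is a reasonable plan, and it is broadly in the spirit of the paper's construction. But the step you yourself flag as ``the main obstacle'' is exactly the heart of the theorem, and you have not closed it. To satisfy $\existsgeq(q,p)\in\Gamma$ you need $\card{\semantics{q}_\Model}\geq\card{\semantics{p}_\Model}$, and since $\card{\semantics{q}_\Model}=\max\set{c(\alpha):\alpha\ni q}$, you need an atom of $q$ whose positive part avoids every noun of strictly smaller $f$-value. Lemma~\ref{lemma-exactly-kappa} only controls \emph{pairwise} intersections (nonempty iff provable); it says nothing about which complete types are realized. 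It is entirely possible in the model it hands you that $\semantics{q}\subseteq\semantics{r_1}\cup\cdots\cup\semantics{r_k}$ for smaller nouns $r_i$ even though no $\forall(q,r_i)$ is provable, in which case your shrinking step collapses $\card{\semantics{q}}$ below $f(q)$ and falsifies $\existsgeq(q,p)$ or $\more(q,p)$. Your fallback --- ``insert a fresh witness at a carefully chosen type'' --- is not a repair but a restatement of the problem: you must specify a complete type (deciding $r$ versus $\rbar$ for every noun), show it is consistent with all $\forall$-sentences of $\Gamma$, and show every new intersection among $\smallorhalf$-nouns it creates is already provable, so as not to break property~(1). None of this is done, and parts (iv)--(vi) of Lemma~\ref{lemma-three-classes} alone do not deliver it.

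The paper's proof is engineered precisely to sidestep this realization problem: it builds a \emph{separate} structure $\Structure_i$ of size $\kappa_i$ for each $\equiv_c$-class $[p_i]$ of $\smallsmall$ (and one for $\halfhalf$), and glues them by \emph{disjoint} sum, so each noun $x\in[p_i]$ automatically owns a private block of size exactly $\kappa_i$ that no smaller class touches; cross-class $\exists$-sentences are then witnessed by adjoining the sentences themselves as fresh points, and Zorn's Lemma (Lemma~\ref{lemma-use-zorn}) is used to make the $\halfhalf$ nouns genuinely complementary. Two further omissions in your sketch: you never arrange that $\semantics{x}\neq\emptyset$ when $\Gamma\proves\exists(x,x)$ by a cardinality rule such as ({\sc non-empty}) (the paper adds $\exists(x,x)$ to $\Delta_i$ explicitly), and you never verify that the universe itself attains $\kappa$, which is needed for the nouns in $\largelarge$. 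As it stands, the proposal is a plan with its central lemma unproved.
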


The rest of this section is devoted to the construction of a model; the verification that it works is in Section~\ref{section-verification}.
The actual completeness theorem of the logic is a stronger result,
and it appears as Theorem~\ref{completeness} below.

\paragraph{Before the construction: find the partition and the listing}
Fix $\Gamma$, and hence $\leq$, $\leq_c$, $\leqmore$, and $\equiv$.
Let 
\begin{equation}
\begin{array}{lcl}
Q & = & \set{p\in \smallsmall : \Gamma\proves \exists(p,p)} 
\end{array}
\label{eq-Q}
\end{equation}
Note that if $p\in Q$ has this property and $p\equiv_c q$, then also $q\in Q$.   This is due to the rule ({\sc int}) which is
derivable in our system.

To begin, use  Lemma~\ref{lemma-three-classes} to partition the nouns into the three sets $\smallsmall$, $\halfhalf$, and $\largelarge$.
The relation $\equiv_c$ is an equivalence relation on the nouns, and $<_c$ is an irreflexive and transitive relation on the
quotient set.  
We write $[p_i]$ for $\set{q : q \equiv_c p_i}$.   
Use Lemma~\ref{lemma-listing}, and fix a listing of 
the strict (irreflexive) order on the quotient set
$$(Q/{\!\!}\equiv_c, <_c) ,$$
say as 
\begin{equation}
\label{listing}
[p_1], \ldots, [p_n].
\end{equation}
(This sequence is finite because $\Gamma$ is a finite set of sentences.)

Our construction has several steps.  In Steps $1$, $\ldots$, $n$, we carry out a construction for
each term in (\ref{listing}).   Step $i$ builds something which is a structure (see Definition~\ref{def-models-structures} just below) but
probably not a model 
in our sense.

\begin{definition}  Let $A$ be a set of nouns.   A \emph{structure over $A$} is a 
    set $S$ together
    with interpretations $\semantics{x}$ of all $x\in A$.   (Again, the difference between a structure and a model is that a 
    structure need not interpret $\xbar$ for $x\in A$.   For that matter, $A$ could be empty; in that case, a structure over $A$ is just a set.)  We denote structures by letters $\SS$.
    \label{def-models-structures}
\end{definition}

Step $i$ will produce a structure $\Structure_i$  over $[p_i]$.
We also use the notation $\kappa_i$ for $\card{\Structure_i}$.
In Step $n+1$, 
we do the same thing for $\halfhalf$, obtaining a structure over $\halfhalf$.  
Note that $\halfhalf$ is closed under complements, unlike 
all of the classes $[p_i]$ in (\ref{listing}).   After this, we put the models $\Structure_1$, $\Structure_2$, $\ldots$, $\Structure_n$, $\Structure_{n+1}$
together, in a non-obvious way.   And at the end, we have a final step for the nouns in $\largelarge$.
Then we turn our structure into a model.

\rem{
    \subsection{Construction: when $i \leq n$ and  $\Gamma\not\proves\exists(p_i,p_i)$}
    \label{section-construction-no-provable-existence}
    
    If $\Gamma\not\proves\exists(p_i,p_i)$, then we set $\Structure_i$ to be the empty structure.
    That is, $S_i = \emptyset$, and $\semantics{x} = \emptyset$ for all $x\in [p_i]$.
    We also have $\kappa_i = 0$.
    
    We may re-order our listing so that this
    happens for some initial segment of our listing in (\ref{listing}).   The initial segment of empty structures might be 
    itself empty.  (That is, if  might the case that for all $p$, $\Gamma\proves\exists(p,p)$.)  But if 
    $\Gamma\not\proves\exists(p_i,p_i)$ and $j \leq i$, then $\Gamma\not\proves\exists(p_j,p_j)$.
    Here is the reason:  if $\Gamma\not\proves\exists(p,p)$ and $\Gamma\proves\exists(q,q)$.
    Then $\nott (q\leq_c p)$.  So we might move $p$ (or rather $[p]$) to before $q$ in out listing.
}

\subsection{Construction: 
    $i = 1, \ldots n$.}

\label{section-construction-provable-existence}

We remind the reader that the listing in (\ref{listing}) is a topological sort of the equivalence classes $[x]$
modulo $\equiv_c$ which are in $\smallsmall$ and which have the property that $\Gamma\proves\exists(x,x)$.
It is possible that $n = 0$, and in that case we skip ahead to Section~\ref{section-n-plus-one}.

Let $\kappa_1 = \aleph_0$.   Given $\kappa_{i -1}$, we first give the definition of $\kappa_i$,
and then this is used in the definition of $\Structure_i$.

\begin{definition} 
    Let $2 \leq i \leq n$.    We say that $i$ \emph{demands a larger size} if 
    for some $j < i$, $p_j <_{more} p_i$ but
    $\kappa_j  = \kappa_{i -1}$.
    (Please note that for $i = n+1$ we shall see a different definition in Section~\ref{section-n-plus-one}.)
    \label{def-demands}
\end{definition}

(Here is the point of this definition:   Suppose that $i$ demands a larger size.
We make sure that $\Structure_{i}$ is of size strictly larger than $\Structure_{i-1}$, and we
interpret all $x\in [p_i]$ by sets whose size is that of the universe $S_{i}$ of $\Structure_i$.
Thus, $\semantics{x}$ will be a set which is larger than the size of $\semantics{y}$ whenever $\existsmore(x,y)$ is in $\Gamma$.)

If $i = 1$, then as we have mentioned, $\kappa_i = \aleph_0$.
If $i >1 $ demands a larger size, set $\kappa_i = (\kappa_{i-1})^+$.  (This is the smallest infinite cardinal larger than $\kappa_i$.)
If $i>1$ does not demand a larger size, set $\kappa_i = \kappa_{i-1}$.

Continuing, let 
$$\begin{array}{lcl}
\Gamma_i & = & \set{\phi \in \Gamma : \mbox{ all variables in $\phi$ belong to $[p_i]$}}\\
\Delta_i  & = & \set{\phi\in \Gamma_i : \mbox{$\phi$ is a sentence in $\forall$ or $\exists$}} \cup \set{\exists(x,x) : x\in [p_i]}
\end{array}
$$
Let  $\Structure_i$ be a structure obtained by applying  Lemma~\ref{lemma-exactly-kappa} so that
\begin{equation}
\Structure_i \models \Delta_i
\end{equation}
\begin{equation}
\card{S_i} = \kappa_i
\end{equation}
\begin{equation}
\mbox{$\card{\semantics{x}}_{\Structure_i}  = \kappa_i$  for all $x \in [p_i]$.}
\label{cardkappai}
\end{equation}
\begin{equation}
\mbox{For all $p,q\in [p_i]$, $\semantics{p}\cap\semantics{q} \neq \emptyset$ iff $\Gamma\proves\exists(p,q)$.}
\label{nonemptiness}
\end{equation}

\rem{
    Please note that we do not regard $\Structure_i$ as a model in the exact sense of this paper.
    For us $\Structure_i$ is a set $M_i$ together with interpretations $\semantics{x}$ of all $x\in [p_i]$.
    But we do not care to interpret the complemented versions $\xbar$ of the variables $x\in [p_i]$.
    The reason is in Section~\ref{section-construction-large} below 
    we shall interpret these variables, and we cannot do this yet.
    In other words, it would be useless and therefore potentially misleading to say that $\semantics{\xbar} = M_i\setminus \semantics{x}$
    at this point.
}

\subsection{Construction:    step $n + 1$, for the nouns in $\halfhalf$}
\label{section-n-plus-one}

At this point, we take care of the nouns in $\halfhalf$.   Even if $\halfhalf = \emptyset$, our construction still might
need to increase the overall size of the model to be sure that the interpretations of variables in $\largelarge$ 
are truly larger than those in $\smallsmall$.

At step $n+1$, we have structures $\Structure_1$, $\ldots$, $\Structure_{n}$
and also $\kappa_1$, $\ldots$, $\kappa_{n}$.
Before building $\Structure_{n+1}$, we need a slightly different notion of \emph{$n+1$ demands a larger size}.
We say that 
$n+1$ \emph{demands a larger size} if either (a)
for some $j < n+1$, $p_j <_{more} p_{n+1}$, but
$\kappa_j  = \kappa_{n}$; 
or  (b) 
for some $x\in \largelarge$, there is some $j < n+1$ so that $p_j <_{more} x$, but $\kappa_j  = \kappa_{n}$;
or (c) $\kappa_n = 0$.
As before, if $n+1$ demands a larger size, set $\kappa_{n+1} = (\kappa_n)^+$; otherwise, set $\kappa_{n+1} = \kappa_n$.
(In case (c) just above, we take $\kappa_{n+1} = \aleph_0$.)

Then build $\Structure_{n+1}$ exactly in Section~\ref{section-construction-provable-existence},
using $\Delta_{n+1}$ and $\kappa_{n+1}$.
Note that for $h\in \halfhalf$, both $h$ and $\hbar$ are interpreted in $\Structure_{n+1}$.

\subsection{Construction: combining $\Structure_1$, $\ldots$, $\Structure_{n+1}$ into a structure $\Structure$}
\label{section-combining}

Let 
\begin{equation}
\begin{array}{lcl}
S & = &  S_1 + \cdots + S_{n+1} + \Gamma_{\exists}\\
\end{array}
\label{eq-M}
\end{equation}
where $\Gamma_{\exists}$ is the set of sentences $\exists(a,b)$ which belong to $\Gamma$.
(The $+$ notation is explained in Section~\ref{section-unions-disjoint-unions}.)
$S$ is 
made into a structure called $\Structure$, as follows.  For $x\in\smallorhalf$,
\begin{equation}
\begin{array}{lcll}
& \semantics{x} & = &  \sum_{i} \bigcup_{y\in [p_i]} \set{
    \semantics{y}_{\Structure_i}  : y \leq x}
+   \existsset_x\\ 
\mbox{where} 
& \existsset_x & = &  \set{\exists(y,z)\in \Gamma: y\leq x \mbox{ or } z \leq x} \\
\end{array}
\label{eq-x}
\end{equation}

Once again, note that $\Structure$ is a structure, not necessarily a model.
An overall requirement of our models is that they interpret $x$ and $\xbar$ as complements for all $x$.
This is probably not the case for $x\in \halfhalf$, and so this is why we need Section~\ref{section-construction-half}
just below.

\subsection{Construction: taking care of the variables in $\halfhalf$}
\label{section-construction-half}

For $x,\xbar\in\halfhalf$, there is more to do.   The sets $\semantics{x}$ and $\semantics{\xbar}$ are
disjoint, and their cardinalities are the same.   
But it need not be the case that $\semantics{x}\cup\semantics{\xbar}$ is all of $S$.
(See~\cite{moss:forDunn} for the easy details.)
And now we use Lemma~\ref{lemma-use-zorn} just below to define the semantics in this case.

\begin{lemma}
    Let $\kappa$ be an infinite cardinal, 
    let $W$ be a set of size $\kappa$.
    Let $N$ be a set of nouns which is closed under complements.
    For each $x\in N$, let $\pair{x}$ be a subset of $W$.
    We assume two properties:
    \begin{enumerate}
        \item $\pair{x}$ and $\pair{\xbar}$ are disjoint for all $x$.
        \item If  $x\leq y$, then $\pair{x}\subseteq \pair{y}$.
    \end{enumerate}
    Then, there are sets $\pairalt{x}$ for all $x\in N$ with the 
    following properties:
    \begin{enumerate}
        \item $\pairalt{x}$ and $\pairalt{\xbar}$ are disjoint for all $x$.
        \item If  $x\leq y$, then $\pairalt{x}\subseteq \pairalt{y}$.
        \item $\pair{x}\subseteq \pairalt{x}$.
        \item $\pairalt{x}\cup\pairalt{\xbar} = W$.
    \end{enumerate}
    Moreover, if $\pair{x}$ has size $\kappa$ for all $x$, then so does $\pairalt{x}$.
    \label{lemma-use-zorn}
\end{lemma}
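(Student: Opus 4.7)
The plan is to apply Zorn's Lemma to the poset $\mathcal{P}$ whose elements are families $F = (F_x)_{x \in N}$ of subsets of $W$ satisfying $\pair{x} \subseteq F_x$, $F_x \cap F_{\xbar} = \emptyset$, and the monotonicity requirement that $x \leq y$ implies $F_x \subseteq F_y$; I order $\mathcal{P}$ by coordinatewise inclusion. The family $(\pair{x})_{x \in N}$ itself lies in $\mathcal{P}$ by the hypotheses of the lemma, and chains in $\mathcal{P}$ have upper bounds given by coordinatewise unions (disjointness survives because for comparable $F^\alpha \leq F^\beta$ one has $F^\alpha_x \cap F^\beta_{\xbar} \subseteq F^\beta_x \cap F^\beta_{\xbar} = \emptyset$, and the other two conditions pass through unions trivially). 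Thus Zorn's Lemma produces a maximal element $(\pairalt{x})_{x \in N}$. Conditions (1), (2), and (3) of the statement hold by construction, and the size claim is immediate from $\pair{x} \subseteq \pairalt{x} \subseteq W$ together with $\card{W} = \kappa$. All of the work lies in establishing condition (4): $\pairalt{x} \cup \pairalt{\xbar} = W$.

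I argue (4) by contradiction from maximality. Suppose there exist $x_0 \in N$ and $w \in W$ with $w \notin \pairalt{x_0} \cup \pairalt{\overline{x_0}}$. Define a candidate extension $F'$ by setting $F'_y = \pairalt{y} \cup \set{w}$ when $x_0 \leq y$ and $F'_y = \pairalt{y}$ otherwise. Preservation of $\pair{y} \subseteq F'_y$ and of the monotonicity condition in $\leq$ are immediate (the latter from transitivity of $\leq$). For disjointness $F'_y \cap F'_{\ybar} = \emptyset$, any element other than $w$ would lie in $\pairalt{y} \cap \pairalt{\ybar}$, which is empty. And $w$ itself lies in $F'_y \cap F'_{\ybar}$ only in one of three combinations, each analysed via the ({\sc anti}) rule: (a) $x_0 \leq y$ and $w \in \pairalt{\ybar}$, in which case $\ybar \leq \overline{x_0}$ by ({\sc anti}) and monotonicity of $\pairalt{\cdot}$ then forces $w \in \pairalt{\overline{x_0}}$, contradicting the choice of $w$; (b) $x_0 \leq \ybar$ and $w \in \pairalt{y}$, which is symmetric; or (c) $x_0 \leq y$ and $x_0 \leq \ybar$, whence by ({\sc anti}) $y \leq \overline{x_0}$ and transitivity gives $x_0 \leq \overline{x_0}$. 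Thus, if $x_0 \not\leq \overline{x_0}$, the family $F'$ lies in $\mathcal{P}$ and strictly extends $(\pairalt{x})$ at coordinate $x_0$ (since $w \in F'_{x_0}$ but $w \notin \pairalt{x_0}$), contradicting maximality.

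The entirely symmetric argument, adding $w$ over the up-closure of $\overline{x_0}$ rather than $x_0$, forces $\overline{x_0} \leq x_0$ as well. But $x_0 \leq \overline{x_0}$ and $\overline{x_0} \leq x_0$ together collapse $\pair{x_0} = \pair{\overline{x_0}} = \emptyset$ by the original disjointness assumption, a degenerate case that does not occur in the intended use of the lemma (in Section~\ref{section-construction-half}, where $N = \halfhalf$ and Lemma~\ref{lemma-three-classes}(vii) excludes $x_0 \leq \overline{x_0}$ for $x_0 \in \halfhalf$). The main obstacle of the proof is thus the disjointness check for the candidate $F'$; the ({\sc anti}) rule is what makes the argument work, localising any potential obstruction to the single syntactic condition $x_0 \equiv \overline{x_0}$.
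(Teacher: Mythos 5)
Your proof is correct and takes essentially the same route as the paper's: Zorn's Lemma applied to the same poset of families satisfying (1)--(3), followed by a maximality argument that adjoins a single missing point $w$ to the up-closure of a noun, with ({\sc anti}) resolving the disjointness check. Your case analysis is in fact more careful than the paper's rather terse version, and you rightly flag that the argument (and the lemma as literally stated) requires $\nott(x_0\leq \overline{x_0}\leq x_0)$ --- a non-degeneracy condition left implicit in the paper but guaranteed in the application by Lemma~\ref{lemma-three-classes}.
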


\begin{proof}
    We consider the family of all functions $\pairalt{\ }: N \to \mathcal{P}(W)$
    where $\mathcal{P}(W)$ is the set of subsets of $W$, 
    and where we satisfy (1)--(3) in our result.   This family ordered by pointwise inclusion.   The resulting poset is closed under unions of chains, and so we may use Zorn's Lemma.   Let $\pairalt{\ }$ be a maximal element.
    We claim that for all $x$,  $\pairalt{x}\cup\pairalt{\xbar} = W$.
    
    Suppose towards a contradiction that this is false.
    Fix some $x$ so that  $\pairalt{x}\cup\pairalt{\xbar} \neq W$, and also fix $w\notin \pairalt{x}\cup\pairalt{\xbar}$.
    If for some $y \geq x$, $w\in \pairalt{y}\cup\pairalt{\ybar} $, then $w$ can only belong to one of those sets 
    (since they are disjoint).   Let us assume that $w\in\pairalt{y}$ and that $y\geq x$.
    (Other options include:   $w\in\pairalt{\ybar}$ and that $y\geq x$; or 
    $w\in\pairalt{y}$ and that $y\geq \xbar$.   All of the details are similar in these cases, mutatis mutandis.)
    In this case, put $w\in \pairalt{z}$ for $z \geq y$.
    This gives a larger $\pairalt{\ }$ function.    And if there is no $y$ like this, we can simply put 
    $w\in \pairalt{z}$ for $z \geq y$.  Either way, we have a larger $\pairalt{\ }$ function.
    So we have our contradiction.
    
    This shows the claim, and hence condition (4) holds.  The last assertion (on cardinalities) is easy.
\end{proof}

We apply Lemma~\ref{lemma-use-zorn}
to interpret all of the nouns in $\halfhalf$.  We  take 
$\kappa = \kappa_{n+1}$,
$W$ to be $M$ from (\ref{eq-M}),  
$N$ to be $\halfhalf$, 
and $\pair{x} = \semantics{x}$ from (\ref{eq-x}).
The properties which we have seen to insure that all of the assumptions in Lemma~\ref{lemma-use-zorn} hold.
Also, if $x\in\halfhalf$, then $\Gamma\proves \exists(x,x)$ by ({\sc non-empty}).
Thus, $\semantics{x}_{\SS_{n+1}} \neq \emptyset$ for all such $x$.  The same holds when we move from $\SS_{n+1}$ to $\SS$.
Thus the size of $\semantics{x}$ in $\Nodel$ is exactly $\kappa_{n+1}$.

\subsection{Construction: the remaining nouns, and the overall model $\Model$}
\label{section-construction-large}

Up until this point, we have not interpreted the nouns which are in $\smallsmall$ but not in 
the set $Q$ of (\ref{eq-Q}).   For each $x$ in that set, we take $\semantics{x} = \emptyset$.
So we now have interpreted all nouns in $\smallorhalf$.

For $x\in\largelarge$, we would like to set
\begin{equation}
\begin{array}{lcl}
\semantics{x} & = & S\setminus \semantics{\xbar}.
\end{array}
\label{eq-semantics-large-x}
\end{equation}
However, we also would like the size of $\semantics{x}$ to be at least that of $S$.
If any $y\in\smallorhalf$ has $\cardparentheses{S\setminus\semantics{y}_{\SS}} < \card{S}$, then add a copy of $S$
to the universe without expanding the size of any variable in $\smallorhalf$.
That is, replace $S$ by $S+S$ with all $\semantics{y}$ taken from the copy on the left.
Then interpret $\semantics{x}$ by (\ref{eq-semantics-large-x}) above, 
For the nouns $h\in\halfhalf$, we need to chose one of $h$ or $\hbar$ to include the new points.
And we need to be sure that any inequalities $h_1 \leq h_2$ are respected.
So on top of all this, we need to appeal to Lemma~\ref{lemma-use-zorn} again.

At long last, we have interpreted all nouns: 
we have arranged that $\semantics{\xbar} = S\setminus\semantics{x}$ for $x\in \smallsmall$. 
It follows that the same fact is true for $x\in\largelarge$.  And for $x\in\halfhalf$, we have arranged for this in Section~\ref{section-construction-half}.
So we have a bona fide model.    We write this model as $\Model$,
we change the name of the universe from $S$ to $M$,
and we let $\kappa = \card{\Structure}$.
The proof that $\Structure\models\Gamma$
comes next.

\section{Verifying the properties of the model}
\label{section-verification}

We must verify that $\Model\models\Gamma$.  For this, we split the argument into a number of claims.

\begin{claim}
    Let $x\notin Q$, where $Q$ is as in (\ref{eq-Q}).  Then $\semantics{x} = \emptyset$.
    \label{claim-not-in-Q}
\end{claim}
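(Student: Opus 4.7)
The plan is to unfold the defining equation~(\ref{eq-x}) for $\semantics{x}$ and verify that every summand is empty. I read the hypothesis $x\notin Q$ in the context where Equation~(\ref{eq-x}) is in force, namely $x\in\smallsmall$ with $\Gamma\not\proves\exists(x,x)$; the nouns in $\halfhalf\cup\largelarge$ are handled by the separate constructions of Sections~\ref{section-construction-half} and~\ref{section-construction-large} and fall outside the scope of this formula.

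First I would dispose of the contributions coming from $\Structure_i$ for $i\leq n$. For each such $i$ we have $[p_i]\subseteq Q$, so every $y\in[p_i]$ satisfies $\Gamma\proves\exists(y,y)$. Were $y\leq x$ to hold, an application of ({\sc darii}) to $\exists(y,y)$ and $\forall(y,x)$ would yield $\Gamma\proves\exists(y,x)$; then ({\sc conversion}) followed by ({\sc some}) would give $\Gamma\proves\exists(x,x)$, contradicting $x\notin Q$. Hence the set $\set{y\in[p_i]:y\leq x}$ is empty for every $i\leq n$, and these summands contribute nothing to $\semantics{x}$.

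For the summand at index $i=n+1$, which ranges over $y\in\halfhalf$: if $y\leq x$ held for some such $y$, then $y\leq_c x$ by Proposition~\ref{prop-leqc}(\ref{tth}), and part~(iv) of Lemma~\ref{lemma-three-classes} would force $y\in\smallsmall$ because $x\in\smallsmall$. This contradicts $y\in\halfhalf$, so this summand is also empty.

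Finally, I would show $\existsset_x=\emptyset$. If $\exists(y,z)\in\Gamma$ with $y\leq x$, then ({\sc some}) yields $\Gamma\proves\exists(y,y)$, and the same chain of rules used above produces $\Gamma\proves\exists(x,x)$, a contradiction. The case $z\leq x$ is symmetric, invoking ({\sc conversion}) first to swap arguments. Combining all three cases, $\semantics{x}$ is an empty sum, so $\semantics{x}=\emptyset$. I do not foresee a substantial obstacle here; the argument is a short case analysis, the only mild subtlety being to identify in each case precisely which derivability fact pushes $\exists(x,x)$ into $\Gamma$'s deductive closure and so contradicts $x\notin Q$.
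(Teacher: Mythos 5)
Your proof is correct, but it takes a genuinely different route from the paper's. The paper's proof of this claim is a one-line pointer to Section~\ref{section-construction-large}, where the construction simply \emph{stipulates} $\semantics{x}=\emptyset$ for every $x\in\smallsmall\setminus Q$ (those nouns are deliberately left uninterpreted by the earlier steps, since the listing~(\ref{listing}) only covers equivalence classes of elements of $Q$, and they are assigned the empty set by fiat at the end). You instead treat the formula~(\ref{eq-x}) as applying to such $x$ and compute that every summand vanishes: the $i\leq n$ summands because $[p_i]\subseteq Q$ and any $y\leq x$ with $\Gamma\proves\exists(y,y)$ would push $\exists(x,x)$ into the deductive closure via ({\sc darii}), ({\sc conversion}), ({\sc some}); the $i=n+1$ summand because $y\leq_c x$ with $x\in\smallsmall$ forces $y\in\smallsmall$ by Lemma~\ref{lemma-three-classes}(iv); and $\existsset_x=\emptyset$ by the same derivability argument. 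All three steps check out. What your version buys is a consistency check the paper glosses over: the text of~(\ref{eq-x}) is stated for all $x\in\smallorhalf$, which literally includes $\smallsmall\setminus Q$, and your computation shows that the formula and the explicit stipulation agree, so the claim holds under either reading. You were also right to restrict the hypothesis to $x\in\smallsmall$; taken at face value the claim would fail for $x\in\halfhalf\cup\largelarge$ (those interpretations have size $\kappa_{n+1}$), and both you and the paper implicitly read it the intended way.
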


\begin{proof} This was directly established in Section~\ref{section-construction-large}. 
\end{proof}

\begin{claim} If $i \leq j$, then $\kappa_i \leq \kappa_j$.
    $\kappa_{n+1}$ is an infinite cardinal.  
\end{claim}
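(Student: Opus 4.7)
The plan is to prove both assertions by a straightforward induction on the stage, unwinding the definition of $\kappa_i$ given in Sections~\ref{section-construction-provable-existence} and~\ref{section-n-plus-one}.

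For the monotonicity statement, I would argue that $\kappa_{k-1}\leq \kappa_k$ for every $k \in \{2,\ldots,n+1\}$, and then the general claim follows by transitivity. Fix such a $k$. If $k$ does not demand a larger size, then by construction $\kappa_k = \kappa_{k-1}$ and the inequality is trivial. If $k$ does demand a larger size, then $\kappa_k = (\kappa_{k-1})^+$; since the successor cardinal of any cardinal $\mu$ satisfies $\mu \leq \mu^+$, we again obtain $\kappa_{k-1}\leq \kappa_k$. A single induction on $j-i$ then yields $\kappa_i \leq \kappa_j$ whenever $i \leq j$.

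For the infiniteness of $\kappa_{n+1}$, I split on whether any ``small-and-inhabited'' stages were carried out at all. If $n \geq 1$, then the base case sets $\kappa_1 = \aleph_0$, which is infinite. By the monotonicity just proved, $\kappa_{n+1}\geq \kappa_1 = \aleph_0$, so $\kappa_{n+1}$ is infinite. (One could equivalently note that the successor operation preserves infiniteness, so an easy induction preserves infiniteness from stage~$1$ onward.) If $n = 0$, then there are no preceding $\Structure_i$; this is precisely case~(c) in the definition of ``$n+1$ demands a larger size'' in Section~\ref{section-n-plus-one} (reading the absent $\kappa_n$ as $0$), and the text explicitly stipulates $\kappa_{n+1} = \aleph_0$ in that situation. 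So in either case $\kappa_{n+1}$ is infinite.

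There is really no obstacle here beyond carefully matching the inductive step to the two clauses of Definition~\ref{def-demands} (and its analogue for step $n+1$), and handling the degenerate $n=0$ case by invoking the explicit fallback $\kappa_{n+1} = \aleph_0$.
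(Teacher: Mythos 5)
Your proof is correct and follows essentially the same route as the paper: an easy induction on the stage for monotonicity, and a check of the base/degenerate cases for the infiniteness of $\kappa_{n+1}$. The only cosmetic difference is that you split on $n\geq 1$ versus $n=0$ (using $\kappa_1=\aleph_0$ directly), whereas the paper splits on whether some $j\leq n$ demands a larger size; your version is, if anything, slightly more faithful to the construction as literally written, since the listing ranges only over classes in $Q$ and so $\kappa_1=\aleph_0$ whenever $n\geq 1$.
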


\begin{proof}
    An easy induction shows the monotonicity assertion in our claim.
    Suppose that some $j \leq n$,
    $j$ demands a larger size.  Let $j$ be least with this property.
    Then $\kappa_j$ is infinite, and so $\kappa_{n+1}$ also will be.
    If no $j \leq n$ demands a larger size, then $\kappa_n = 0$.
    So by condition (c) in Section~\ref{section-n-plus-one},
    we see that $\kappa_{n+1} = \aleph_0$.
\end{proof}

\begin{claim}  \label{cl1133}
    If $z\leq w$ are in $\smallorhalf$, then $\semantics{z} \subseteq \semantics{w}$.
\end{claim}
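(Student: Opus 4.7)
The plan is to trace the inclusion $\semantics{z}\subseteq\semantics{w}$ through the three stages of the construction that affect nouns in $\smallorhalf$: the initial definition~(\ref{eq-x}) in Section~\ref{section-combining}, the Zorn enlargement in Section~\ref{section-construction-half}, and the possible doubling of $S$ together with a second Zorn application in Section~\ref{section-construction-large}.

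First, I would check the inclusion for the interpretation straight out of~(\ref{eq-x}). Each summand contributing to $\semantics{z}$ has the form $\semantics{y}_{\Structure_i}$ for some $y\in[p_i]$ with $y\leq z$, or arises from a sentence $\exists(a,b)\in \Gamma$ with $a\leq z$ or $b\leq z$. Since $\leq$ is transitive and $z\leq w$, every such $y$ also satisfies $y\leq w$, and every such sentence lies in $\existsset_w$. So the inclusion holds at this stage.

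Second, I would handle Section~\ref{section-construction-half}. If both $z,w\in\halfhalf$, then Lemma~\ref{lemma-use-zorn}(2) applied to the family $\pair{x}=\semantics{x}$ directly yields $\pairalt{z}\subseteq\pairalt{w}$. If $z\in\smallsmall$ and $w\in\halfhalf$, then the interpretation of $z$ is untouched by the Zorn step, while $\pairalt{w}\supseteq\semantics{w}$ by Lemma~\ref{lemma-use-zorn}(3); combined with the inclusion from the first stage we get $\semantics{z}\subseteq\semantics{w}\subseteq\pairalt{w}$. The remaining mixed case, namely $z\in\halfhalf$ with $w\in\smallsmall$, is ruled out by Lemma~\ref{lemma-three-classes}(iv),(vi): $z\leq w$ forces $z\leq_c w$, so $w\in\smallsmall$ would force $z\in\smallsmall$ as well. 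The sub-case $z\in\smallsmall\setminus Q$ is trivial because then $\semantics{z}=\emptyset$ by Claim~\ref{claim-not-in-Q}.

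Third, I would check Section~\ref{section-construction-large}. The only step that changes $\smallorhalf$-interpretations is the possible replacement of $S$ by $S+S$ and the re-invocation of Lemma~\ref{lemma-use-zorn} to distribute the new points among the nouns in $\halfhalf$. The nouns in $\smallsmall$ are untouched, and Lemma~\ref{lemma-use-zorn}(2) again preserves inclusions among $\halfhalf$-nouns, while (3) shows that any small-noun already sitting inside a half-noun stays inside after the enlargement. I expect the main bookkeeping obstacle to be just this second Zorn application: one must verify that the hypotheses of Lemma~\ref{lemma-use-zorn} are met by the interpretations as they stand at the start of Section~\ref{section-construction-large}, which in turn requires that the inclusions have been preserved through every prior step---exactly the induction being set up here.
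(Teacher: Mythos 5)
Your proposal is correct and follows essentially the same route as the paper: the inclusion is read off from the definition in~(\ref{eq-x}) via transitivity of $\leq$ and $\existsset_z\subseteq\existsset_w$, and is then preserved through the Zorn enlargements by conditions (2) and (3) of Lemma~\ref{lemma-use-zorn}. You are in fact somewhat more thorough than the paper, which disposes of the final stage (the possible doubling of $S$ and the second application of Lemma~\ref{lemma-use-zorn}) in a single sentence, whereas the paper's only extra ingredient is the short opening observation that $z\notin Q$ makes the claim trivial and that $z\in Q$, $z\leq w$ forces $\Gamma\proves\exists(w,w)$ — a point you cover with your $\smallsmall\setminus Q$ sub-case.
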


\begin{proof}
    First, if $z\notin Q$, then $\semantics{z} = \emptyset$, and we are done.
    So we shall assume that $z\in Q$.   If $w\notin Q$, then $z\notin Q$.
    For if $z\in Q$ and $z \leq w$, then it is easy to use the logic to see that $\Gamma\proves\exists(w,w)$;
    this is a contradiction to $w\notin Q$.   Thus, we only need to verify this claim when $z$ and $w$ are not in $Q$.

    In $\Structure$, we have $\semantics{z} \subseteq \semantics{w}$
    by the definition of the semantics in (\ref{eq-x}) and the (easy to check) fact that $\existsset_z \subseteq \existsset_w$.
    So even when $x$ and $y$ are in $\halfhalf$ and we 
    use Lemma~\ref{lemma-use-zorn} to expand their interpretations in $\Model$, we still see that  $\semantics{z} \subseteq \semantics{w}$.
\end{proof}

\begin{claim}
    \label{cardclaim}
    Let $1 \leq i \leq n+1$.  If $x\in [p_i]$,  then $\card{\semantics{x}} =  \kappa_i $.
\end{claim}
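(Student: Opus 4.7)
The plan is to unwind the definition of $\semantics{x}$ in stages, tracking the cardinality at each stage.

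First, I would focus on the cardinality of $\semantics{x}$ as it stands after Section~\ref{section-combining}, i.e.\ in the structure $\Structure$, before any of the expansions of Sections~\ref{section-construction-half} and~\ref{section-construction-large}. By definition (\ref{eq-x}),
$$
\semantics{x} \quadeq \sum_{j=1}^{n+1}\bigcup\bigl\{\semantics{y}_{\Structure_j} : y\in[p_j],\ y\leq x\bigr\} \ +\ \existsset_x.
$$
The key observation for the upper bound is that if $y\in[p_j]$ and $y\leq x$ then $y\leq_c x$, hence $[p_j]\leq_c[p_i]$. Because the listing (\ref{listing}) was produced by Lemma~\ref{lemma-listing} as a topological sort, this forces $j\leq i$ (with $j=i$ exactly when $[p_j]=[p_i]$). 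Each slice $\semantics{y}_{\Structure_j}$ has size $\kappa_j\leq\kappa_i$ by (\ref{cardkappai}), the inner union is over a finite family (since $\Gamma$ mentions only finitely many nouns), the outer sum is over at most $i$ indices, and $\existsset_x$ is finite. Since $\kappa_i$ is an infinite cardinal (note $\kappa_1=\aleph_0$ and the sequence is nondecreasing, and for $i=n+1$ clause~(c) of the definition in Section~\ref{section-n-plus-one} forces $\kappa_{n+1}\geq\aleph_0$), the whole thing has cardinality at most $\kappa_i$. For the lower bound, the $j=i$, $y=x$ summand contributes a copy of $\semantics{x}_{\Structure_i}$, which already has cardinality $\kappa_i$ by (\ref{cardkappai}). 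Thus $\card{\semantics{x}}_{\Structure}=\kappa_i$.

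Next I would check that the remaining two steps of the construction do not disturb this cardinality. If $i\leq n$, then $x\in\smallsmall$; Section~\ref{section-construction-half} only redefines interpretations of nouns in $\halfhalf$, and the universe-doubling step of Section~\ref{section-construction-large} is explicitly designed so as not to expand the interpretation of any variable in $\smallorhalf$, so $\semantics{x}$ in $\Model$ is still the same set as in $\Structure$, of size $\kappa_i$. If $i=n+1$, so $x\in\halfhalf$, Section~\ref{section-construction-half} invokes Lemma~\ref{lemma-use-zorn} with $\kappa=\kappa_{n+1}$ and ambient set of size $\kappa_{n+1}$; since we have already shown $\card{\pair{x}}=\kappa_{n+1}$, the ``Moreover'' clause of Lemma~\ref{lemma-use-zorn} gives $\card{\pairalt{x}}=\kappa_{n+1}$. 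The potential doubling of the universe in Section~\ref{section-construction-large} and the second appeal to Lemma~\ref{lemma-use-zorn} there can only enlarge $\semantics{x}$, yet the ambient universe has size $\kappa_{n+1}+\kappa_{n+1}=\kappa_{n+1}$, so the final $\semantics{x}$ still has cardinality exactly $\kappa_{n+1}$.

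The only delicate part is the upper-bound step: one must pin down precisely why the indices appearing in the formula for $\semantics{x}$ are bounded above by $i$. This is where the topological-sort property of (\ref{listing}) is crucial, combined with the fact that $\leq$ on nouns refines $\leq_c$ (Proposition~\ref{prop-leqc}(\ref{tth})). Everything else is a routine cardinal-arithmetic argument using that $\kappa_i$ is infinite and swallows finite sums and products.
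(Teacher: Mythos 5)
Your proof is correct and follows essentially the same route as the paper's: upper bound by observing that $y\leq x$ forces $y\leq_c x$, hence (via the topological sort of (\ref{listing})) the contributing indices $j$ satisfy $j\leq i$ and each slice has size $\kappa_j\leq\kappa_i$, with the finite set $\existsset_x$ absorbed; lower bound from the $y=x$ summand of size $\kappa_i$ guaranteed by (\ref{cardkappai}). Your explicit check that the later expansion steps (Lemma~\ref{lemma-use-zorn} for $\halfhalf$ and the universe-doubling for $\largelarge$) preserve these cardinalities is a welcome bit of extra care that the paper's own proof passes over with a brief remark.
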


\begin{proof} 
    If $\Gamma\not\proves\exists(x,x)$, then also $\Gamma\not\proves\exists(p_i,p_i)$
    by ({\sc int}); see Example~\ref{ex-int}.
    In this case $\kappa_i = 0$, $\semantics{x} = \emptyset$, and our result follows.
    The more interesting case is when $\Gamma\proves\exists(p_i,p_i)$.
    According to (\ref{eq-x}),  $\semantics{x}$ is the disjoint union of a family of sets $\bigcup \set{
        \semantics{y}_{\Structure_i}  : y \leq x}$ and a finite set $\existsset_x$.
    Since all the interpretations are infinite, we ignore the finite set $\existsset_x$.

    If $y \leq x$, then the unique $j$ such that $y\in [p_j]$ is at most $i$.
    This is by ({\sc subset-size}).
    Thus $\card{\semantics{y}_{\SS_j}} = \kappa_j \leq \kappa_i$.
    Hence $\semantics{x}$ in our model $\Model$ is a finite union of sets of size $\leq \kappa_i$.
    So 
    $\cardparentheses{\semantics{x}} \leq \kappa_i$.
    But in $\Structure_i$, $\cardparentheses{\semantics{x}} = \kappa_i$.
    (For $i \leq n$, this is by (\ref{cardkappai}).   For $i = n+1$, this was noted at the end of Section~\ref{section-construction-half}.)
    So the same is true in $\Model$.
\end{proof}

\begin{claim}
    Let $x, y \in \smallorhalf$.   If $\semantics{x} \cap \semantics{y} \neq \emptyset$, then
    $\Gamma\proves \exists(x,y)$.
    \label{nonemptiness-claim}
\end{claim}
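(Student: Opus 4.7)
My plan is to proceed by case analysis on the position of a witness $w\in\semantics{x}\cap\semantics{y}$ within the disjoint decomposition of the universe set up in Section~\ref{section-combining}. Before the optional enlargement of Section~\ref{section-construction-large}, we have $S = S_1 + \cdots + S_{n+1} + \Gamma_{\exists}$, and for $x\in\smallorhalf$ the interpretation $\semantics{x}$ is given by~(\ref{eq-x}), further extended via Lemma~\ref{lemma-use-zorn} when $x\in\halfhalf$. A preliminary reduction: by Claim~\ref{claim-not-in-Q}, if $x\in\smallsmall\setminus Q$ then $\semantics{x}=\emptyset$, so I may assume $x,y\in Q\cup\halfhalf$ and fix a witness $w$.

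The main case I would handle first is $w\in S_i$ for some $i$, with $w$ coming from the $\SS_i$-contribution to $\semantics{x}$ (and similarly for $\semantics{y}$). Reading off~(\ref{eq-x}), there must exist $u,v\in[p_i]$ with $u\leq x$, $v\leq y$, and $w\in\semantics{u}_{\SS_i}\cap\semantics{v}_{\SS_i}$. Property~(\ref{nonemptiness}) of $\SS_i$ then gives $\Delta_i\proves\exists(u,v)$, hence $\Gamma\proves\exists(u,v)$. Combining this with $\forall(u,x)$ and $\forall(v,y)$ via two applications of ({\sc darii}) interleaved with ({\sc conversion}) yields $\Gamma\proves\exists(x,y)$. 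The parallel subcase $w=\exists(a,b)\in\Gamma_{\exists}$ is handled the same way: $\exists(a,b)\in\existsset_x\cap\existsset_y$ forces one of $a,b$ to be $\leq x$ and one to be $\leq y$, and the hypothesis $\exists(a,b)\in\Gamma$ combined with the relevant $\forall$-facts---plus ({\sc some}) when the two chosen endpoints coincide---delivers $\Gamma\proves\exists(x,y)$ through ({\sc darii}) and ({\sc conversion}).

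The residual case, and where I expect the main obstacle, is when $w$ was added to $\semantics{x}$ or $\semantics{y}$ only by the Zorn extension of Section~\ref{section-construction-half} or by the enlargement of Section~\ref{section-construction-large}. Since only $\halfhalf$ variables accumulate new points at those stages (and $\largelarge$ is ruled out by the hypothesis $x,y\in\smallorhalf$), I may assume both $x,y\in\halfhalf$. To close this case I would appeal to the monotonicity clause and maximality of the extension produced by Lemma~\ref{lemma-use-zorn}: the set $\{h\in\halfhalf : w\in\pairalt{h}\}$ is upward-closed and contains no complementary pair, and the joint membership $w\in\pairalt{x}\cap\pairalt{y}$ must be forced by an element of the original $\pair{\cdot}$-structure, reducing to the two preceding cases. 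The essential technical step---and the main obstacle---is to verify that the Zorn argument in Section~\ref{section-construction-half} can be carried out so that it does not create any $\pairalt{x}\cap\pairalt{y}$ that is not already witnessed in $\SS$, so that every intersection present in $\Model$ is traceable to a derivation of $\exists(x,y)$ from $\Gamma$.
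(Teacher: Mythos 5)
Your first two cases are precisely the paper's own proof: locate the witness in one summand of the disjoint union, pull it back via (\ref{eq-x}) to nouns $u\leq x$ and $v\leq y$ in $[p_k]$, invoke property (\ref{nonemptiness}) to get $\Gamma\proves\exists(u,v)$, and finish with ({\sc darii}) and ({\sc conversion}); the $\Gamma_{\exists}$ summand is handled the same way through $\existsset_x\cap\existsset_y$. Up to that point there is nothing to add.

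The residual case is where the real issue lies, and you are right to isolate it --- the paper's proof never mentions it, treating $\semantics{x}$ and $\semantics{y}$ as literally given by (\ref{eq-x}) and ignoring the points later added to the $\halfhalf$ nouns by Lemma~\ref{lemma-use-zorn} and by the doubling in Section~\ref{section-construction-large}. But your sketched resolution does not close the case, and as stated it cannot: Lemma~\ref{lemma-use-zorn} controls only $\pairalt{x}\cap\pairalt{\xbar}$, and its conclusion $\pairalt{x}\cup\pairalt{\xbar}=W$ forces every nonempty $\pairalt{y}$ to meet $\pairalt{x}$ or $\pairalt{\xbar}$. So for non-complementary nouns in $\halfhalf$ the extension must create overlaps that are not witnessed in the original $\pair{\cdot}$ data (already for $\Gamma=\emptyset$, where no $\exists(x,y)$ with $y\neq x$ is provable, some pair of $\halfhalf$ nouns must intersect). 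Hence the property you hope to verify --- that every joint membership is traceable to the pre-extension structure --- is not merely unproved but unattainable by any way of running the Zorn argument; the claim in full generality would have to be restricted (it is safe for pairs in $\smallsmall$, whose interpretations are never enlarged past (\ref{eq-x})) or the $\halfhalf$--$\halfhalf$ case argued by entirely different means. So your case 3 is a genuine gap, though it is one the paper's proof shares by omission rather than one you introduced.
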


\begin{proof}
    Let $i$ and $j$ be such that $x\in [p_i]$ and $y\in [p_j]$.
    The interpretations $\semantics{x}$ and $\semantics{y}$ in $\Model$ 
    are disjoint unions, and so there is some $k$ such that 
    $$\begin{array}{lcl}
    (\semantics{x} \cap S_k)
    \cap
    (\semantics{y} \cap S_k)
    & \neq & \emptyset
    \end{array}
    $$
    Let $\alpha$ belong to the set on the left.
    There are $p \leq x$ and $q\leq y$ in $[p_k]$ such that $\alpha \in \semantics{p}_{\Model_k} \cap  \semantics{q}_{\Model_k}$.
    But then $ \semantics{p}_{\Model_k} \cap  \semantics{q}_{\Model_k} \neq \emptyset$.
    And so by (\ref{nonemptiness}), $\Gamma\proves \exists(p,q)$.  Together with  $p \leq x$ and $q\leq y$,
    we have our result.
\end{proof}

\begin{claim}
    \label{largeclaim}
    Let $x\in\largelarge$.  Then $\card{\semantics{x}} = \kappa_{n+1}$.
    If in addition, $y \leqmore x$, then 
    $\card{\semantics{y}} < \kappa_{n+1}$.
\end{claim}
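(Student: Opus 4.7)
The plan is to handle the two assertions separately, extracting the first from the construction of Section~\ref{section-construction-large} and the second by showing that any witness $y$ must live in $\smallsmall$ (by Lemma~\ref{lemma-three-classes}(ii)) and then comparing its cardinality $\kappa_j$ to $\kappa_{n+1}$ by invoking clause (b) of the ``demands a larger size'' definition from Section~\ref{section-n-plus-one}.

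First I would verify the auxiliary fact that $\card{M}=\kappa_{n+1}$. By construction $M$ is either $S$ or $S+S$, and $S=S_1+\cdots+S_{n+1}+\Gamma_{\exists}$. Since the sequence $\kappa_1,\ldots,\kappa_{n+1}$ is non-decreasing by the earlier claim, $\kappa_{n+1}$ is infinite, and $\Gamma_{\exists}$ is finite, the standard cardinal arithmetic for unions of infinitely many/finitely many sets gives $\card{S}=\kappa_{n+1}$, hence $\card{M}=\kappa_{n+1}$. For the first part of the statement, let $x\in\largelarge$. Then $\xbar\in\smallsmall$ and Section~\ref{section-construction-large} sets $\semantics{x}=M\setminus\semantics{\xbar}$, explicitly arranging that $\card{\semantics{x}}\geq\card{S}=\kappa_{n+1}$; the reverse inequality $\card{\semantics{x}}\leq\card{M}=\kappa_{n+1}$ is immediate, giving equality.

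For the second part, suppose $y\leqmore x$, i.e.\ $\Gamma\proves\more(x,y)$. By Lemma~\ref{lemma-three-classes}(ii), $y\in\smallsmall$. Split on whether $y\in Q$. If $y\notin Q$, then by Claim~\ref{claim-not-in-Q} we have $\semantics{y}=\emptyset$, and $0<\kappa_{n+1}$ trivially. Otherwise $y\equiv_c p_j$ for some $1\leq j\leq n$ from the listing~(\ref{listing}), and by Claim~\ref{cardclaim} we have $\card{\semantics{y}}=\kappa_j$. Since $p_j\leq_c y$ and $y<_{more}x$, Proposition~\ref{prop-leqc}(\ref{morepart}) yields $p_j<_{more}x$. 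Now we compare $\kappa_j$ with $\kappa_{n+1}$: from monotonicity $\kappa_j\leq\kappa_n$, and if $\kappa_j<\kappa_n$ then $\kappa_j<\kappa_n\leq\kappa_{n+1}$, while if $\kappa_j=\kappa_n$ then clause~(b) in the definition of ``$n+1$ demands a larger size'' in Section~\ref{section-n-plus-one} (applied with this $x\in\largelarge$ and this $j$) forces $\kappa_{n+1}=(\kappa_n)^+$, and again $\kappa_j<\kappa_{n+1}$.

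The main pitfall is the bookkeeping in the first part, namely confirming that the optional doubling $S\mapsto S+S$ and the subsequent Zorn-style extension of the $\halfhalf$ interpretations in Section~\ref{section-construction-large} do not push $\card{M}$ past $\kappa_{n+1}$; this comes down to noting that an infinite cardinal plus itself is itself. The second part is essentially bookkeeping once one recognizes that clause~(b) of Section~\ref{section-n-plus-one} was crafted precisely to handle witnesses of $\leqmore$ lying below variables of $\largelarge$.
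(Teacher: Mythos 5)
Your proof is correct and follows essentially the same route as the paper: the first assertion is read off from the arrangement in Section~\ref{section-construction-large}, and the second reduces to showing $\kappa_j<\kappa_{n+1}$ via clause~(b) of the ``$n+1$ demands a larger size'' definition. You are in fact slightly more careful than the paper in two spots — handling $y\notin Q$ separately and explicitly deriving $p_j<_{more}x$ from $p_j\leq_c y<_{more}x$ via Proposition~\ref{prop-leqc} before invoking clause~(b) — but these are refinements of the same argument, not a different one.
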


\begin{proof}
    The fact that  $\card{\semantics{x}} = \kappa_{n+1}$
    follows from what we did in Section~\ref{section-construction-large}.
    
    Note that $y$ belongs to $\smallsmall$
    by Lemma~\ref{lemma-three-classes}.
    Let $ j \leq n$ be such that   $y\in [p_j]$.
    
    Suppose that $\kappa_j = \kappa_{n}$.
    Then $n+1$ demands a larger size (due to $x$ and $y$), and so $\kappa_j = \kappa_n < \kappa_{n+1}$.
    And if $\kappa_j <  \kappa_{n}$, then of course 
    $\kappa_j < \kappa_n \leq \kappa_{n+1}$.
    Either way,  $\kappa_j < \kappa_{n+1}$.
    Now our result follows from Claim~\ref{cardclaim}:
    $\card{\semantics{y}} =  \kappa_j < \kappa_{n+1}$.
\end{proof}

\begin{claim}
    $\Model$ satisfies all $\exists$ sentences in $\Gamma$.
    \label{claim-all-exists}
\end{claim}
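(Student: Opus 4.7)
The plan is to verify, for each $\exists(a,b) \in \Gamma$, that there is an element of $\semantics{a} \cap \semantics{b}$ in $\Model$. The natural candidate witness is the element $\exists(a,b)$ itself, viewed as a point of $\Gamma_\exists \subseteq S$, since by the very definition of $\existsset_x$ in~(\ref{eq-x}) we have $\exists(a,b) \in \existsset_a \cap \existsset_b$ via the trivial inclusions $a \leq a$ and $b \leq b$. I would organise the argument by case analysis on which of the three partition classes $\smallsmall$, $\halfhalf$, $\largelarge$ contain $a$ and $b$.

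In the first case, both $a, b \in \smallorhalf$, and by (\ref{eq-x}) the candidate $\exists(a,b)$ already lies in $\semantics{a} \cap \semantics{b}$ in $\Structure$. The extension from $\Structure$ to $\Model$ in Section~\ref{section-construction-half} invokes Lemma~\ref{lemma-use-zorn} only to enlarge the interpretations of variables in $\halfhalf$, so the witness survives in $\Model$.

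In the remaining case, at least one of $a, b$ lies in $\largelarge$; suppose $a \in \largelarge$ (the other subcase is symmetric). Then $\semantics{a} = M \setminus \semantics{\overline{a}}$, so I need the candidate $\exists(a,b)$ (still in $\semantics{b}$ when $b \in \smallorhalf$) to lie outside $\semantics{\overline{a}}$, which by (\ref{eq-x}) applied to $\overline{a}$ reduces to ruling out both $a \leq \overline{a}$ and $b \leq \overline{a}$. The first is impossible because Lemma~\ref{lemma-three-classes}(v) forbids $a \leq_c \overline{a}$ when $a \in \largelarge$. The second would, via ({\sc darii}) applied to $\exists(a,b)$ and $\forall(b, \overline{a})$, force $\Gamma \proves \exists(a, \overline{a})$; combined with $a \in \largelarge$, which furnishes some $q$ with $\Gamma \proves \more(q, \overline{a})$, I would extract a derivation of $\phi$ by iterating the rules ({\sc more-left}), ({\sc weak-more-anti}), and ({\sc more-some}) to reach a position where either ({\sc x}) or ({\sc x-card}) fires, contradicting consistency of $\Gamma$. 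When both $a$ and $b$ lie in $\largelarge$, the witness must come from $M \setminus (\semantics{\overline{a}} \cup \semantics{\overline{b}})$, and a cardinality argument via Claim~\ref{largeclaim} shows this set is non-empty: each of $\semantics{\overline{a}}$ and $\semantics{\overline{b}}$ has cardinality strictly less than $\kappa_{n+1} = \card{M}$, so the union of the two has cardinality equal to their maximum, which is still strictly less than $\card{M}$.

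I expect the main obstacle to be the derivation of inconsistency in the $b \leq \overline{a}$ subcase, where extracting a clean application of ({\sc x}) or ({\sc x-card}) requires careful chaining of the cardinality rules and use of the partition machinery in Lemma~\ref{lemma-three-classes}. Once that subcase is dispatched, the rest of the verification is routine: everything else follows from the construction together with the cardinality bookkeeping already assembled in Claims~\ref{cl1133} and~\ref{largeclaim}.
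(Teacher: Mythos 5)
Your proof follows the paper's argument almost step for step: the same three-way case split on the partition classes, the same witness $\exists(a,b)\in\existsset_a\cap\existsset_b$ for the non-large cases, and the same cardinality argument via Claim~\ref{largeclaim} when both nouns lie in $\largelarge$. The only divergences are minor: for the alternative $a\leq\abar$ the paper obtains inconsistency in a single step by feeding $\exists(x,y)$ and $\forall(y,\ybar)$ directly into rule ({\sc x}) rather than appealing to Lemma~\ref{lemma-three-classes}(v), and for the alternative you flag as the main obstacle ($b\leq\abar$), the paper is exactly as terse as you are, asserting without an explicit derivation that this hypothesis together with $\exists(a,b)\in\Gamma$ renders $\Gamma$ inconsistent.
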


\begin{proof}
    Consider first a sentence $\exists(x,y)\in \Gamma$, where $x$ and $y$ both belong to $\smallorhalf$.
    The sentence $\exists(x,y)$ itself belongs to
    $\existsset_x \cap \existsset_y$, hence to 
    $\semantics{x}\cap\semantics{y}$ in $\Model$.
    Therefore, $\Model\models \exists(x,y)$.
    
    Now let us consider  sentences  $\exists(x,y)\in \Gamma$, where $x\in (\smallorhalf)$ and $y\in\largelarge$.
    Then $\exists(x,y)\in \semantics{x}$.  
    Suppose towards a contradiction that $\exists(x,y)\in \semantics{\ybar}$.
    Note that $\ybar\in\smallsmall$.  In view of our definition of $\existsset_{\ybar}$,
    either $x\leq \ybar$ or $y\leq\ybar$.
    Either of these alternatives, together with  $\exists(x,y)\in \Gamma$ shows that $\Gamma$ is inconsistent.

    Finally, we consider  $\exists(x,y)$, where $x,y\in\largelarge$.
    In this case, we argue that $\Model\models \exists(x,y)$, even when this sentence does not belong to $\Gamma$.
    
    As shown in Claim~\ref{largeclaim},
    the nouns in $\smallsmall$ are interpreted by sets whose size is strictly smaller than
    the size of the universe, $\kappa_{n+1}$.    
    Thus $\semantics{\xbar} \cup\semantics{\ybar}$ has size $<\kappa$.
    So the size of $M \setminus (\semantics{\xbar} \cup\semantics{\ybar})$ is $\kappa_{n+1}$.    In particular, it is non-empty.    
    This is to say that $\semantics{x} \cap \semantics{y}$  is non-empty.    
\end{proof}

\begin{claim} 
    \label{115}
    $\Model$ satisfies every $\forall(x,y)$ sentence  in $\Gamma$.
\end{claim}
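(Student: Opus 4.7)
My plan is to split on where $x$ and $y$ land in the standard partition $\smallsmall$, $\halfhalf$, $\largelarge$.  A preliminary observation rules out the configuration $x \in \largelarge$, $y \in \smallorhalf$:  if $x \in \largelarge$, then $\xbar \in \smallsmall$, and $({\sc anti})$ applied to $\forall(x,y)$ followed by $({\sc subset-size})$ gives $\ybar \leq_c \xbar$; part~(iv) of Lemma~\ref{lemma-three-classes} then forces $\ybar \in \smallsmall$, whence $y \in \largelarge$.  So only three configurations remain to treat.

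Two of them are direct.  When both $x, y \in \smallorhalf$, the inclusion $\semantics{x} \subseteq \semantics{y}$ is exactly Claim~\ref{cl1133}.  When both $x, y \in \largelarge$, their complements $\xbar, \ybar$ lie in $\smallsmall \subseteq \smallorhalf$, and $({\sc anti})$ on $\forall(x,y)$ gives $\forall(\ybar, \xbar)$; Claim~\ref{cl1133} then produces $\semantics{\ybar} \subseteq \semantics{\xbar}$, and since Section~\ref{section-construction-large} sets $\semantics{x} = M \setminus \semantics{\xbar}$ and $\semantics{y} = M \setminus \semantics{\ybar}$, complementation transfers this to $\semantics{x} \subseteq \semantics{y}$.

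The substantive case is $x \in \smallorhalf$ and $y \in \largelarge$, where I must show $\semantics{x} \cap \semantics{\ybar} = \emptyset$.  I would sub-split on whether $x$ lies in $\halfhalf$ or in $\smallsmall$.  In the first sub-case, $\xbar \in \halfhalf \subseteq \smallorhalf$, $({\sc anti})$ gives $\ybar \leq \xbar$, Claim~\ref{cl1133} yields $\semantics{\ybar} \subseteq \semantics{\xbar}$, and the disjointness $\semantics{x} \cap \semantics{\xbar} = \emptyset$ baked into the Zorn construction of Section~\ref{section-construction-half} (condition~1 of Lemma~\ref{lemma-use-zorn}, and preserved by the second Zorn pass in Section~\ref{section-construction-large}) closes the sub-case.

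The hard sub-case is $x \in \smallsmall$, which I would handle by contradiction.  If $\alpha \in \semantics{x} \cap \semantics{\ybar}$, then since both nouns lie in $\smallorhalf$, Claim~\ref{nonemptiness-claim} gives $\Gamma \proves \exists(x, \ybar)$, and chaining with $\forall(x, y)$ via $({\sc conversion})$ and $({\sc darii})$ yields $\Gamma \proves \exists(\ybar, y)$.  The main obstacle is to convert this into an honest derivation of absurdity from $\Gamma$; here I would lean on the $\more$-witnesses $\Gamma \proves \more(s, x)$ and $\Gamma \proves \more(t, \ybar)$ that come bundled with $x, \ybar \in \smallsmall$, pumping new $\existsgeq$-statements via $({\sc weak-more-anti})$, lining up a $\more$-statement against its converse $\existsgeq$ using $({\sc more-right})$ or $({\sc more-left})$, and finally closing with $({\sc x-card})$ in the style of Examples~\ref{first-contradiction} and~\ref{second-contradiction}.
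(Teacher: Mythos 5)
Three of your four branches coincide with the paper's proof and are fine: both nouns in $\smallorhalf$ is Claim~\ref{cl1133}; both in $\largelarge$ and the $x\in\halfhalf$, $y\in\largelarge$ configuration are correctly reduced to that claim by passing to complements; and your preliminary observation that $x\in\largelarge$ forces $y\in\largelarge$ matches the paper. The gap is in the one sub-case you yourself flag as "the main obstacle," namely $x\in\smallsmall$, $y\in\largelarge$, and it is a genuine gap: the ({\sc x-card})-based escape you sketch cannot be completed. To fire ({\sc x-card}) you must derive $\more(p,q)$ \emph{and} $\existsgeq(q,p)$ for a single pair from $\Gamma$. But no rule of the system has an $\exists$-premise and a $\more$- or $\existsgeq$-conclusion, so in any derivation whose first \emph{ex falso} application is ({\sc x-card}), the subtrees above that application contain no $\exists$-sentences at all. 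Hence the only new fact you obtained from the hypothesis $\semantics{x}\cap\semantics{\ybar}\neq\emptyset$ --- namely $\Gamma\proves\exists(x,\ybar)$ via Claim~\ref{nonemptiness-claim} --- contributes nothing toward ({\sc x-card}); if that rule were applicable, $\Gamma$ would already be inconsistent outright, contradicting the standing assumption. The witnesses $\more(s,x)$ and $\more(t,\ybar)$ only tell you that $\xbar$ and $y$ have full cardinality; they create no clash. The inconsistency in this sub-case is Boolean, not cardinal.

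The paper's finish is a one-liner: $\ybar\in\smallsmall$, so $x,\ybar\in\smallorhalf$, and if $\semantics{x}\cap\semantics{\ybar}\neq\emptyset$ then $\Gamma\proves\exists(x,\ybar)$; since $\forall(x,y)\in\Gamma$ and $\exists(x,\ybar)$ is exactly $\overline{\forall(x,y)}$ in the sense of Definition~\ref{semanticnegation}, $\Gamma$ is inconsistent --- this is precisely the pair of sentences the ({\sc x}) rule is designed to detect (read as pairing $\exists(p,q)$ with its negation $\forall(p,\qbar)$; the paper invokes the same step again in Claim~\ref{claim-all-exists}). So replace your last paragraph, including the detour through $\exists(\ybar,y)$ and the $\more$-witnesses, with that direct appeal to ({\sc x}) and the consistency of $\Gamma$.
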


\begin{proof} 
    We consider a number of cases.
    
    \begin{enumerate}
        \item $x,y\in \smallorhalf$.   This case follows by Claim~\ref{cl1133}.
        
        \item \label{needtojustify}
        $x\in\smallsmall$ and $y\in\largelarge$.
        Let us check that $\semantics{x} \cap\semantics{\ybar} = \emptyset$, since this implies the result.
        Note that $\ybar\in\smallsmall$.  Suppose towards a contradiction that 
        $\semantics{x} \cap\semantics{\ybar} \neq \emptyset$.
        By 
        Claim~\ref{nonemptiness-claim}, $\Gamma\proves \exists(x,\ybar)$.  But then $\Gamma$ is inconsistent.
        This contradiction shows that $\semantics{x} \cap\semantics{\ybar} = \emptyset$.
        
        \item $x\in\halfhalf$ and $y\in\largelarge$.  Consider $\ybar$ and $\xbar$, and then appeal to Case 2 above.
        \item  $x\in\largelarge$.  Then we also have $y\in\largelarge$.   Consider $\ybar$ and $\xbar$, and then appeal to Case 1.
    \end{enumerate}
    
    This completes the proof.
\end{proof}

\begin{claim} 
    \label{115b}
    $\Model$ satisfies every $\more(x,y)$ sentence  in $\Gamma$.
\end{claim}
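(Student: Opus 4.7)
My plan is to prove $\Model \models \more(x,y)$ whenever $\more(x,y) \in \Gamma$ by a case analysis on the partition class of the ``larger'' variable $x$. First, from the hypothesis $y <_{more} x$, Lemma~\ref{lemma-three-classes}(ii) immediately places $y \in \smallsmall$. Applying ({\sc more-some}) then ({\sc some}) yields $\Gamma \proves \exists(x,x)$, so in particular $x \in Q$ whenever $x \in \smallsmall$. If $y \notin Q$, then $\semantics{y} = \emptyset$ by Claim~\ref{claim-not-in-Q}, while $\semantics{x}$ has positive (indeed infinite) cardinality by Claim~\ref{cardclaim} or Claim~\ref{largeclaim}, and the desired inequality is trivial. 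So I reduce to $y \in Q$, and let $j \leq n$ be the unique index with $y \in [p_j]$.

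If $x \in \largelarge$, Claim~\ref{largeclaim} directly gives $\card{\semantics{x}} = \kappa_{n+1}$ and $\card{\semantics{y}} < \kappa_{n+1}$, finishing the case. If $x \in \smallsmall$, let $i$ satisfy $x \in [p_i]$. Then Proposition~\ref{prop-leqc}(\ref{morepart}) applied to the chain $p_j \leqc y <_{more} x \leqc p_i$ gives $p_j <_{more} p_i$, whence $[p_j] <_c [p_i]$, and so $j < i$ in the topological sort~(\ref{listing}). If $\kappa_{i-1} = \kappa_j$, Definition~\ref{def-demands} tells us $i$ demands a larger size, so $\kappa_i = (\kappa_{i-1})^+ > \kappa_j$; otherwise $\kappa_j < \kappa_{i-1} \leq \kappa_i$ by the monotonicity of the $\kappa$-sequence. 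Either way $\kappa_j < \kappa_i$, and Claim~\ref{cardclaim} delivers $\card{\semantics{y}} = \kappa_j < \kappa_i = \card{\semantics{x}}$.

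The remaining case $x \in \halfhalf$ follows the same template: $\card{\semantics{x}} = \kappa_{n+1}$ (as established at the end of Section~\ref{section-construction-half} and recorded in Claim~\ref{cardclaim}), and I need $\kappa_j < \kappa_{n+1}$. This uses clause (a) in the ``$n+1$ demands a larger size'' definition from Section~\ref{section-n-plus-one}, triggered by the pair $(j, x)$: if $\kappa_j = \kappa_n$, then $\kappa_{n+1} = (\kappa_n)^+ > \kappa_j$, and otherwise $\kappa_j < \kappa_n \leq \kappa_{n+1}$. The main obstacle I anticipate is the small notational mismatch in clause (a), which is written in terms of ``$p_j <_{more} p_{n+1}$'' even though $\halfhalf$ has no single representative; I read it as ``there exist $j < n+1$ and some $x \in \halfhalf$ with $p_j <_{more} x$ and $\kappa_j = \kappa_n$,'' which is precisely what Proposition~\ref{prop-leqc}(\ref{morepart}) supplies from $p_j \leqc y <_{more} x$. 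Once this reading is justified, the three cases uniformly reduce to the same cardinal-arithmetic inequality, and the claim is established.
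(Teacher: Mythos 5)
Your proof is correct and follows essentially the same route as the paper: a case split on whether $x$ lies in $\smallsmall$, $\halfhalf$, or $\largelarge$, using Proposition~\ref{prop-leqc}(\ref{morepart}) to push $<_{more}$ to the class representatives, the ``demands a larger size'' mechanism to get $\kappa_j < \kappa_i$ (resp.\ $\kappa_j < \kappa_{n+1}$), and Claim~\ref{largeclaim} for the $\largelarge$ case. Your version is in fact marginally tidier than the paper's in two spots --- you dispose of $y\notin Q$ explicitly, and in the $\smallsmall$ case your direct dichotomy on whether $\kappa_{i-1}=\kappa_j$ replaces the paper's somewhat garbled search for a least $k$ with $j<k\leq i$ demanding a larger size --- but the underlying argument is the same.
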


\begin{proof}
    Consider a sentence $\more(x,y)\in \Gamma$.  So $y <_{more} x$.
    Thus $y <_c x$ as well.    
    We first consider the case that $x\in \smallsmall$.  
    In our listing  (\ref{listing}), we must list $[y]$ before $[x]$.
    We must check that our construction insures that $\card{\semantics{y}} < \card{\semantics{x}}$.
    That is, let $j < i$ be such that $x \in [p_i]$ and $y\in [p_j]$. 
    Let $k$ be least such that $j < k \leq i$ and $k$ demands a larger size.  There must exist such a $k$; it is the least 
    number such that there is some $w$ in $[p_w]$ with $\Gamma\proves\more(w,y)$.   
    Our construction arranges that
    $\kappa_i \geq \kappa_{k} = (\kappa_{k-1})^+  > \kappa_k \geq \kappa_j$.
    And by Claim~\ref{cardclaim}, 
    we see that $\card{\semantics{x}} > \card{\semantics{y}}$.

    We next turn to the case $x\in\halfhalf$.
    Here we must have $y\in \smallsmall$, by Lemma~\ref{lemma-three-classes}.
    Let $j \leq n$ be such that $y\in [p_j]$.
    If $\kappa_n  > \kappa_j$, then 
    $$\begin{array}{lclclclclcl}
    \card{\semantics{x}} & =  & \kappa_{n+1} & \geq & \kappa_n & > & \kappa_j & = & 
    \card{\semantics{y}}.
    \end{array}
    $$
    If $\kappa_n  = \kappa_j$, then $n+1$ demands a larger size.  
    So $\kappa_{n+1} > \kappa_n$, and we have the same fact: $\existsmore(x,y)$ is true in $\Model$.
    
    Finally,  the case of $x\in\largelarge$ follows immediately from Claim~\ref{largeclaim}.
    
    This completes the proof.
\end{proof}

\begin{claim} 
    \label{115c}
    $\Model$ satisfies every $\existsgeq(x,y)$ sentence  in $\Gamma$.
\end{claim}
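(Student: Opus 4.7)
The plan is to mirror the case analysis used for Claim~\ref{115b}, splitting according to which of the three classes $\smallsmall$, $\halfhalf$, $\largelarge$ contains $x$. Throughout, since $\existsgeq(x,y)\in\Gamma$ we have $y\leq_c x$, and we may freely invoke parts (iv)--(vi) of Lemma~\ref{lemma-three-classes} to locate $y$ in the partition.

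First consider $x\in\smallsmall$. By Lemma~\ref{lemma-three-classes}(iv), also $y\in\smallsmall$. If $x\notin Q$, then by ({\sc card-$\exists$}) applied to a hypothetical proof of $\exists(y,y)$ together with $\existsgeq(x,y)$, we would get $\Gamma\proves\exists(x,x)$, a contradiction; hence $y\notin Q$ as well, and by Claim~\ref{claim-not-in-Q} both interpretations are empty. If instead $x\in Q$, the same ({\sc card-$\exists$}) rule (or, equivalently, ({\sc int})) shows that $y\in Q$ too. Let $i,j$ be such that $x\in[p_i]$ and $y\in[p_j]$; since the sequence (\ref{listing}) is a proper listing of $(Q/{\equiv_c},<_c)$, the relation $y\leq_c x$ forces $j\leq i$. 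The construction guarantees $\kappa_j\leq \kappa_i$, so Claim~\ref{cardclaim} yields $\card{\semantics{y}}=\kappa_j\leq\kappa_i=\card{\semantics{x}}$.

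Next take $x\in\halfhalf$. Then Lemma~\ref{lemma-three-classes}(vi) gives $y\in\smallsmall\cup\halfhalf$. If $y\in\halfhalf$, then by Claim~\ref{cardclaim} both interpretations have cardinality exactly $\kappa_{n+1}$, and equality gives the desired inequality. If $y\in\smallsmall$, we reduce to the previous case: either $y\notin Q$ and $\semantics{y}=\emptyset$, or $y\in[p_j]$ for some $j\leq n$, in which case Claim~\ref{cardclaim} and the monotonicity of the sequence $\kappa_1\leq\cdots\leq\kappa_{n+1}$ give $\card{\semantics{y}}=\kappa_j\leq\kappa_n\leq\kappa_{n+1}=\card{\semantics{x}}$.

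Finally, suppose $x\in\largelarge$. By Claim~\ref{largeclaim}, $\card{\semantics{x}}=\kappa_{n+1}$, and by the construction in Section~\ref{section-construction-large} the universe $M$ itself has size $\kappa_{n+1}$ (possibly enlarged to a disjoint sum $S+S$, whose cardinality is still $\kappa_{n+1}$ since the latter is infinite). Therefore $\card{\semantics{y}}\leq\card{M}=\kappa_{n+1}=\card{\semantics{x}}$ for any $y$ whatsoever.

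I do not expect a genuine obstacle here; the whole statement is really a bookkeeping consequence of Claims~\ref{cardclaim} and~\ref{largeclaim} together with the fact that the $\kappa_i$ are non-decreasing. The only subtle point to watch is the degenerate situation in which some interpretation in $\smallsmall$ is empty, handled uniformly by the observation that $Q$ is upward closed along $\leq_c$ within $\smallsmall$; this is what prevents a ``small but provably inhabited'' $y$ from sitting under an ``empty'' $x$.
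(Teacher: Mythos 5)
Your proposal follows essentially the same route as the paper: split on which class of the partition contains $x$, dispose of $x\in\halfhalf\cup\largelarge$ via Claims~\ref{cardclaim} and~\ref{largeclaim} (where $\card{\semantics{x}}=\kappa_{n+1}$ is the size of the whole universe), and for $x\in\smallsmall$ use Lemma~\ref{lemma-three-classes}(iv) plus the fact that the listing (\ref{listing}) is a topological sort to get $\kappa_j\leq\kappa_i$. One step is stated backwards: ({\sc card-$\exists$}) together with $\existsgeq(x,y)$ yields ``$y\in Q$ implies $x\in Q$,'' not ``$x\in Q$ implies $y\in Q$.'' So from $x\in Q$ you cannot conclude that $y$ lies in some $[p_j]$ of the listing; but the case $y\notin Q$ is harmless, since then $\semantics{y}=\emptyset$ by Claim~\ref{claim-not-in-Q} and the inequality is trivial. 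With that one sub-case rerouted, the argument is correct and matches the paper's (your treatment of the empty interpretations is in fact slightly more careful than the paper's own).
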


\begin{proof} 
    First, when $x\in\halforlarge$, $\card{\semantics{x}} = \kappa_{n+1}$.
    (See Claims~\ref{cardclaim} and~\ref{largeclaim}.) 
    In this case our result follows.
    
    We are left with $x\in\smallsmall$.  In this case, $y\in\smallsmall$, too, by Lemma~\ref{lemma-three-classes}, part~(iv). 
    Let $i$ and $j$ be such that $x\in [p_i]$ and $q\in [p_j]$.  
    Since $y \leq_c x$, have $x\equiv_c y$ or $x < _c y$.   
    In the first case, $i = j$ and so 
    $\card{\semantics{y}}= \card{\semantics{x}}$.
    In the second case, $[p_i]$ comes before $[p_j]$ in the listing (\ref{listing}); this is by our definition of a listing.
    So $i\leq j$, and thus $\kappa_i \leq \kappa_j$.
    Now our result follows by Claim~\ref{cardclaim}.
\end{proof}

\subsection{Completing the proof of Theorem~\ref{completeness-weak}}
The work in the previous part of this section constitutes a proof of Theorem~\ref{completeness-weak}.  
Although we did not discuss the additional properties of the model which were stated in that theorem,
the construction has arranged them.

\begin{remark} 
    At this point, we need an important remark on the entire construction of this section.
    At various places, we needed to know that various numbers demanded a larger size.
    We never used the assumption that any numbers did not demand a larger size.
    (We shall need this in Section~\ref{section52} below.)   
    And examining all of our claims, we see that the 
    verifications all go through when \emph{more} numbers demand a larger size.
    
    This remark gives a certain flexibility to our construction which we shall exploit in
    Section~\ref{Sdagleq-first-case}.
    \label{remark-change-construction}
\end{remark}


\begin{remark}
	Let $ \Gamma $ be a consistent set of sentences and suppose that  $ (x\leq_c y)$. Claim~\ref{115c} showed that $ \Model $ satisfies $\existsgeq(y,x)  $. Since $ \Model $ satisfies $\existsgeq(y,x)  $, we also have $ \Model \not\models  \more(x,y)  $. In this case, we have a model whenever any numbers do not demand a larger size (see Claims \ref{cardclaim} and \ref{115c}). We shall use this remark in Section \ref{section52}.
\end{remark}

\section{The full result}

This section refines Theorem~\ref{completeness-weak} and also proves the completeness of the logic.

\begin{theorem} \label{completeness}
    If $\Gamma\not\proves\phi$, then there is a model of $\Gamma$ in which
    $\phi$ is false.
\end{theorem}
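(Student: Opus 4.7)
The plan is to reduce Theorem~\ref{completeness} to Theorem~\ref{completeness-weak} by a case analysis on the outermost quantifier of $\phi$. In each case we either take a model of $\Gamma$ produced by Theorem~\ref{completeness-weak} directly, or we first augment $\Gamma$ with an auxiliary sentence whose consistency with $\Gamma$ we verify separately, or we tweak the construction of Section~\ref{section-model-construction} using the refined partition of Lemma~\ref{lemma-three-classes-refined} and the flexibility noted in Remark~\ref{remark-change-construction}.

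For $\phi = \exists(p,q)$: when both $p$ and $q$ lie in $\smallorhalf$ under the standard partition of $\Gamma$, the additional property (\ref{first-addendum}) of Theorem~\ref{completeness-weak} gives $\semantics{p}\cap\semantics{q}=\emptyset$ directly in the constructed model, since otherwise $\Gamma\proves\exists(p,q)$. If $p,q\in\largelarge$, then $\Gamma\proves\more(x,\pbar)$ and $\Gamma\proves\more(y,\qbar)$ for some $x,y$; combining ({\sc weak-more-anti}), ({\sc more-right}), and the argument of Example~\ref{ex-softstricthalf} one obtains $\Gamma\proves\exists(p,q)$, contradicting our hypothesis, so this case cannot arise. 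For the mixed case, Lemma~\ref{lemma-three-classes-refined} with $p^*=\qbar$ (or $p^*=\pbar$) pushes the offending complement into $\smallsmall$, reducing to the first sub-case. For $\phi = \existsgeq(p,q)$ or $\phi = \more(p,q)$, we use the flexibility of the cardinal assignment: by declaring additional indices to ``demand a larger size'' in the sense of Definition~\ref{def-demands}, we can force $\kappa_{[q]}>\kappa_{[p]}$ (respectively, force $\kappa_{[p]}\le\kappa_{[q]}$); Remark~\ref{remark-change-construction} guarantees that all of the Claims of Section~\ref{section-verification} still go through, so the resulting modified model satisfies $\Gamma$ and falsifies $\phi$.

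The hardest case is $\phi = \forall(p,q)$, since this is where the absence of ({\sc raa}) is most keenly felt. Here we wish to show that $\Gamma'=\Gamma\cup\{\exists(p,\qbar)\}$ is consistent; once this is done, Theorem~\ref{completeness-weak} applied to $\Gamma'$ gives a model of $\Gamma$ in which $\forall(p,q)$ fails. To establish consistency, we do not appeal to ({\sc raa}); instead we argue by a direct transformation on proofs. Any derivation witnessing inconsistency of $\Gamma'$ must end in ({\sc x}) or ({\sc x-card}), and by induction on the height of the derivation we trace the uses of $\exists(p,\qbar)$ as a leaf through the rules of Figures~\ref{figure-finite-setting-soundpart} and~\ref{figure-secondpart}, using ({\sc conversion}), ({\sc darii}), and ({\sc anti}) to normalise occurrences, to extract a derivation of $\forall(p,q)$ from $\Gamma$ alone. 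The main obstacle is carrying out this extraction uniformly across all the rules that can have $\exists(p,\qbar)$ as a premise; the previously displayed derivations in Examples~\ref{first-contradiction} and~\ref{second-contradiction} indicate the shape of the ({\sc x})/({\sc x-card}) instances one needs to analyse, and this is where the bulk of the proof effort concentrates.
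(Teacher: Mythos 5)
Your high-level decomposition by the form of $\phi$ matches the paper's, and your sketch for $\existsgeq(p,q)$ and $\more(p,q)$ (refine the partition, manipulate which indices demand a larger size, invoke Remark~\ref{remark-change-construction}) is in the right spirit, though the paper needs further sub-cases there (e.g.\ reordering the listing when $y\leq_c x$ while keeping it a proper listing). However, there are two genuine gaps.

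First, in the case $\phi=\exists(p,q)$ with, say, $p\in\smallsmall$ and $q\in\largelarge$, your proposed reduction via Lemma~\ref{lemma-three-classes-refined} does not work. Taking $p^*=\qbar$ changes nothing, since $\qbar$ is already in $\smallsmall$ when $q\in\largelarge$; and no refinement can move $q$ itself out of $\largelarge$, because $q\in\largelarge$ precisely when $\qbar\leqmore z$ for some $z$, which is exactly what the hypothesis of that lemma forbids. The difficulty is real: in the constructed model a noun in $\largelarge$ is interpreted as the complement of a small set, so $\semantics{p}\cap\semantics{q}$ will typically be nonempty even when $\Gamma\not\proves\exists(p,q)$. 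This is where the paper does its most delicate work (Section~\ref{section-exists-full-result}): it first uses the $\more$-case to obtain a model with $\card{\semantics{x}}\leq\card{\semantics{\ybar}}$, then performs a global surgery producing a new model $\Nodel$ in which $\semantics{\ybar}$ absorbs $\semantics{x}$, and re-verifies every sentence of $\Gamma$ in $\Nodel$.

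Second, for $\phi=\forall(p,q)$ you propose to show $\Gamma\cup\{\exists(p,\qbar)\}$ consistent by a proof transformation, but you do not carry this out; you yourself locate ``the bulk of the proof effort'' in the unexecuted analysis of how $\exists(p,\qbar)$ can feed into each rule. That step is the entire content of the case: it amounts to proving admissibility of \emph{reductio ad absurdum} for universal conclusions, which the paper explicitly flags as much harder and deliberately avoids. The paper instead argues semantically: take the model $\Model$ from Theorem~\ref{completeness-weak}, add a single fresh point $*$ to $\semantics{z}$ for a suitable family of $z$ (depending on where $x$ and $y$ sit in the partition), observe that one added point cannot change any cardinality or $\exists$ sentence, and check by a short case analysis (using consistency of $\Gamma$) that all $\forall$ sentences of $\Gamma$ survive. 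As written, your treatment of this case is a plan rather than a proof, and the $\exists$ mixed case rests on a reduction that fails.
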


\paragraph{Caution}  
We review the point made in Section~\ref{section-architecture}.
It is very tempting at this point to argue as follows for the completeness of the logic. 
``If $\Gamma\not\proves\phi$, then $\Gamma\cup\set{\phibar}$ is consistent, where $\phibar$ is the
negation of $\phi$ which we saw in Definition~\ref{semanticnegation}.  Thus by 
Theorem~\ref{completeness-weak}, this set has a model.   Such a model will satisfy $\Gamma$ and falsify $\phi$.''
The problem with this is that we do not have \emph{reductio ad absurdum} in the logic.  And so we are not entitled
to say that  $\Gamma\cup\set{\phibar}$ is consistent.   In fact, this set \emph{is} consistent, and this follows from the work in this section.   We \emph{could} try to prove the consistency of $\Gamma\cup\set{\phibar}$ proof-theoretically
rather than semantically, but this seems much harder.

\paragraph{Assumptions at this point}
Here is how we show the completeness of the logic.
We split into cases according to $\phi$.    We show that either $\Gamma\proves \phi$, or that $\Gamma$ is inconsistent,
or else there is a model of $\Gamma$ where $\phi$ is false.

\paragraph{Reminder}  The model construction involves many choices that can be manipulated.
First, the order of the atomic sentences makes a difference in the three-fold partition in Lemma~\ref{lemma-three-classes}.
And even after that partition is determined, the particular listing which we use in (\ref{listing}) also affects the model in a big way.

\subsection{The first case: $\phi$ is of the form $\existsgeq(x,y)$}  \label{Sdagleq-first-case}

There are a number of cases.
Our first is when $\xbar \leq_c x$.   In this case,  $\Gamma\proves \phi$.   And so we contradict the assumption in this case.
Thus, we assume that  $\nott (\xbar\leq_c x)$ in what follows.   

The second case is when $\xbar \leqmore z$ for some $z$.    In this case, we again have 
$\Gamma\proves \phi$.    So we assume that for all $z$,  $\nott (\xbar\leqmore z)$.

Thus, we may refine the standard partition
as in Lemma~\ref{lemma-three-classes-refined}: we get a partition with $x\in\smallsmall$.

We further break into cases as to whether $y\in\smallsmall$ or $y\in\halforlarge$.

If both $x$ and $y$ are in $\smallsmall$, 
let $i,j\leq n$ be such that $x\in [p_i]$ and $y\in [p_j]$.
We take a listing whether the class of $[p_i]$ comes before $[p_j]$.   
We can do this by Lemma~\ref{lemma-listing-refined}.   And then in our construction, when we come to Step $j$,
we decide that $j$ demands a bigger size.   Thus we arrange that $\kappa_j  > \kappa_{j-1} \geq \kappa_i$.
This builds a model where $\semantics{x}$ is a set of smaller size than $\semantics{y}$.

If $x\in \smallsmall$ and $y\in\halforlarge$, then change the construction so that $n+1$ demands a larger size.
(This might be true according to our work before, but even if $n+1$ did not demand a larger size,
we can insist on it:  see Remark~\ref{remark-change-construction}.)
We  get a model with 
$\semantics{y}$ a set of strictly larger size than $\semantics{x}$.
So $\existsgeq(x,y)$ is false in that model.

\subsection{The next case: $\phi$ is of the form $\more(x,y)$}
\label{section52}

Consider the standard partition according to $\Gamma$.  If $x$ and $y$ are both in $\halforlarge$, we 
have a model where 
$\card{\semantics{x}} = 
\card{\semantics{y}}$, as desired.
If $x\in\smallsmall$ and $y\in\halforlarge$, 
we get a model where $\card{\semantics{x}} 
\leq
\card{\semantics{y}}$.

If $x\in\halfhalf$ and $y\in \smallsmall$, then we have two further cases.
In case $x\in\halfhalf$ because $x \leq_c \xbar \leq_c x$,
of even if just $\xbar \leq_c x$, we argue as follows.
Since  $y\in \smallsmall$ in the standard partition,
there is some $z$ such that $y <_c z$.  So by
Example~\ref{ex-Top1}, $y <_c x$.
And if $\nott(\xbar\leq_c x)$, then we may refine the standard partition
as in Lemma~\ref{lemma-three-classes-refined}.
At this point, we have a partition where $x$ and $y$ are both in $\smallsmall$, and we treat this below.

If $x\in\largelarge$ and $y\in \smallsmall$, then we have $p$ and $q$ such that 
$\xbar <_c p$ and $y <_c q$.  
By ({\sc weak-more-anti}), $q\leq_c x$.  And so $y <_c x$ as well.   This contradicts
the assumption in this section that $\nott(y <_c x)$.

We turn to the case when
$x$ and $y$ are both in $\smallsmall$,  either in the standard partition or in a refinement thereof.
(It makes no difference.)
By the hypothesis in this section, we do not have $y <_{more} x$.
If $x\equiv_c y$, then $[ x] = [y]$.   We thus get a model where 
$\card{\semantics{x}} = 
\card{\semantics{y}}$.
We have two more cases.  First, consider what happens when
$[y] \not{\!\!\!}{\leq}_c [x]$.   We chose a listing of the $\equiv_c$-classes that puts $x$ first.
And then we get a model where  $\card{\semantics{x}} <
\card{\semantics{y}}$.
Finally, we have the case when 
$y \leq_c x$.   So $[y]$ precedes $x$ in the listing.
In this case, we must alter the listing even further.   Some of the classes between $[y]$ and $[x]$ might
demand a larger size.   We move those (in order) to after $[x]$ in the listing.   We must be sure
that all of the classes which are $<_c [x]$ still come before it in the listing.   
In other words, the classes $[z]$ between $[y]$ and $[x]$ which demand a larger size may be moved to after $[x]$
without falsifying the key property of the listing.   The reason is that if such a class $[z]$ were problematic,
then we would contradict $\nott (y \leqmore x)$.
In this case, we get a model where $\semantics{x}$ and $\semantics{y}$ have the same size. 

\rem{
\begin{remark}  One should read the proof above carefully, with attention to the definition of
    \emph{demands a larger size},
    Definition~\ref{def-demands}.   So since that definition is subtle, we have to check that all of this is correct.
\end{remark} 
}

\subsection{The next case: $\phi$ is of the form $\forall(x,y)$}

We break into a number of subcases.   In each case, we take the model $\Model$ of $\Gamma$ from
Section~\ref{section-model-construction} and modify it by adding a point.
$\Model$ has the property that the interpretation of everything
is either empty or infinite.
We shall be adding one point, call it $*$, to this model.   The point is added in such a way that
it makes $\forall(x,y)$ false.  
The addition of one point to any set doesn't change the truth of any sentences involving cardinality.
Also, none of the $\exists(p,q)$ sentences changes truth value when a point is added to a model.
But we shall be interested to check that the $\forall(p,q)$ sentences from $\Gamma$ are true even after the point is added.

\begin{enumerate}
    \item $x\in\smallorhalf$, $y\in\smallorhalf$.  Add one fresh point $*$ to  $\semantics{z}$
    for all $z$ such that $x\leq z$.   
    
    Here are the details on the $\forall(p,q)$ sentences in $\Gamma$.   Review the argument in Claim~\ref{115}.
    The only case which we must consider is when $p\in\smallsmall$ and $q\in\largelarge$.
    If the new point $*$ belongs to the interpretation of $p$ and $\qbar$, then
    $x\leq p$ and $x\leq \qbar$. 
    But $\qbar \leq \pbar$.   And so $x\leq \pbar$.  Together with $x\leq p$, we see that $x\leq \xbar$.
    And so $\Gamma\proves\forall(x,y)$.   
    This contradiction shows that  indeed $\forall(p,q)$ is true after $*$ is added.  Thus we obtain a model
    of $\Gamma$ which falsifies $\phi$, as desired.

    \item $x\in\halforlarge$ and $y\in\halforlarge$:  replace $x$ and $y$ by $\ybar$ and $\xbar$, and apply
    the last case.   We get a model falsifying $\forall(\ybar,\xbar)$, and thus a model falsifying $\forall(x,y)$.
    
    \item $x\in\smallsmall$, $y\in\largelarge$. 
    This time, we add a single fresh point $*$ to $\semantics{z}$ whenever $x\leq z$ or $\ybar \leq z$.
    We assume that $*$ gets added to $\semantics{a}$, since otherwise $\semantics{a} \subseteq\semantics{b}$ after the addition.
    If $*$ also gets added to  $\semantics{b}$, then again $\semantics{a} \subseteq\semantics{b}$.
    
    We have two cases as to why  $*$ gets added to $\semantics{a}$:  $x\leq a$, and $\ybar\leq a$.
    Since $*$ was not added to $\semantics{b}$, it was added to $\semantics{\bbar}$.  (Notice that $\bbar\in\smallsmall$.)
    So again, we have two cases:   $x\leq \bbar$, and $\ybar \leq \bbar$.  The second is equivalent to $b\leq y$.
    
    We shall examine all four cases and show that in each of them we have $x\leq y$.  This contradicts the overall assumptions in this section.
    
    (a)  $x\leq a$ and $x\leq \bbar$.   So $x\leq a \leq b$, and also $x\leq \bbar$.  Thus $x\leq \xbar$.  As a result, $x\leq y$.

    (b) $x\leq a$ and $b\leq y$.   So $x\leq a \leq b \leq y$.   Thus $x\leq y$.   
    
    (c)  $\ybar \leq a$ and $x\leq \bbar$.   The $\ybar \leq a \leq b\leq \xbar$.  So we have $\ybar\leq \xbar$, and thus $x\leq y$.
    
    (d) $\ybar\leq a$ and $b\leq y$.  This time $\ybar \leq y$.  This implies $x\leq y$.
    
\end{enumerate}

\subsection{The last case: $\phi$ is of the form $\exists(x,y)$}
\label{section-exists-full-result}

Consider the standard partition of the variables according to $\Gamma$.
When $x$ and $y$ are in $\smallorhalf$, the model  
$\Model$ constructed in
the previous section works: see part~\ref{first-addendum} of Theorem~\ref{completeness-weak}.
When $x$ and $y$ are both in $\largelarge$, or when one is in $\halfhalf$ and the other in $\largelarge$
are easy: in these cases $\Gamma\proves \exists(x,y)$.
(See Example~\ref{ex-softstricthalf}.)

We are left with the case that one of the variables, say $x$, is in $\smallsmall$, and the other one,  $y$, is in $\largelarge$.
Both $x$ and $\ybar$ belong to $\smallsmall$.
Further, we cannot have $\Gamma\proves\existsmore(x,\ybar)$, since this implies $\Gamma\proves \exists(x,y)$,
using $\mbox{({\sc more-some})}$.  
Since  $\Gamma\not\proves\existsmore(x,\ybar)$, we know from our work
in 
Section~\ref{section52} that there is a model of $\Gamma$ where 
$\card{\semantics{x}} \leq
\card{\semantics{\ybar}}$. 

If $\Model\models \forall(y,\ybar)$, then again we are done.
So we shall assume that $\Model\models\exists(\ybar,\ybar)$.   And since $\ybar\in\smallsmall$,
Theorem~\ref{completeness-weak} tells us that $\Gamma\proves\exists(\ybar,\ybar)$.

We modify $\Model$ to obtain a different model to be called $\Nodel$.
The two models have the same set of points:  $N = M$.
The interpretation function of $\Nodel$ will be written $\semantics{ \ }^*$.

For $p\in \smallsmall$, 
\begin{enumerate}
    \item If $\ybar \leq p$, then $\semantics{p}^* = \semantics{p} \cup \semantics{x}$
    and  $\semantics{\pbar}^* = \semantics{\pbar} \cap \semantics{\xbar}$.
    \item If $\nott (\ybar\leq p)$, then $\semantics{p}^* = \semantics{p}$ and
    $\semantics{\pbar}^* = \semantics{\pbar}$.
\end{enumerate}
These clauses also define $\semantics{p}^*$ for $p\in\largelarge$.

For $p\in \smallorhalf$,  note that we cannot have $\ybar \leq p, \pbar$.
(For if we did, then $\ybar\leq y$,
and we contradict our assumption that $\Gamma\proves\exists(\ybar,\ybar)$.)
\begin{enumerate}
    \item If $p\in\halfhalf$ and $\ybar \leq p$, then 
    $\semantics{p}^* = \semantics{p} \cup \semantics{x}$
    and  $\semantics{\pbar}^* = \semantics{\pbar} \cap \semantics{\xbar}$.
    \item
    If $p\in\halfhalf$ and $\ybar \leq \pbar$, then 
    $\semantics{\pbar}^* = \semantics{\pbar} \cup \semantics{x}$
    and  $\semantics{p}^* = \semantics{p} \cap \semantics{\xbar}$.
    \item If  $p\in\halfhalf$, and neither $\ybar \leq p$ nor  $\ybar \leq \pbar$,
    then
    $\semantics{p}^* = \semantics{p}$ and
    $\semantics{\pbar}^* = \semantics{\pbar}$.
\end{enumerate}
Again, at most one of every pair of complementary nouns in $\halfhalf$ gets a larger interpretation in $\Nodel$
than in $\Model$.   
This completes the definition of $\Nodel$.
Now this model $\Nodel$ does not change the sizes of any interpretations.  
(This is where we use the assumption that in $\Model$,  $\card{\semantics{y}} \leq
\card{\semantics{\xbar}}$.)
So it satisfies the same
$\existsgeq$ and $\existsmore$ sentences as $\Model$.    In particular, it satisfies all of the 
$\existsgeq$ and $\existsmore$ sentences which happen to belong to $\Gamma$.

\begin{claim}
    $\Nodel$ satisfies all sentences $\forall(a,b)$ which 
    are true in $\Model$, hence all sentences $\forall(a,b)$ which 
    belong to $\Gamma$.
    \label{claim-all-sentences-N}
\end{claim}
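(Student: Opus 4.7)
The plan is to reformulate the piecewise definition of $\Nodel$ as a uniform trichotomy on nouns, independent of which class ($\smallsmall$, $\halfhalf$, $\largelarge$) a noun falls into.  For each noun $p$, say that $p$ is of \emph{type (A)} if $\ybar\leq p$, of \emph{type (B)} if $p\leq y$ (equivalently $\ybar\leq\pbar$), and of \emph{type (C)} otherwise.  A quick check against the construction in Section~\ref{section-exists-full-result} shows that in type (A) we have $\semantics{p}^* = \semantics{p}\cup\semantics{x}$, in type (B) we have $\semantics{p}^* = \semantics{p}\cap\semantics{\xbar}$, and in type (C) we have $\semantics{p}^*=\semantics{p}$.

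First I would verify that the types are well-defined, i.e.\ no $p$ is simultaneously in (A) and (B).  If $\ybar\leq p$ and $\ybar\leq\pbar$ both held, then combining with  $\forall(p,\pbar)$ or $\forall(\pbar,p)$ via ({\sc barbara}) yields $\Gamma\proves\forall(\ybar,y)$, and together with  $\Gamma\proves\exists(\ybar,\ybar)$ the rule ({\sc x}) makes $\Gamma$ inconsistent.  This also reconciles the two clauses of the construction for $p\in\halfhalf$, and matches the remark already made in the text that we cannot have $\ybar\leq p,\pbar$.

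Next, given $\Model\models\forall(a,b)$, i.e.\ $\semantics{a}\subseteq\semantics{b}$, which in our proof-theoretic notation means $a\leq b$, I would observe two monotonicity facts: if $a$ has type (A) then $\ybar\leq a\leq b$ forces $b$ to have type (A); and if $b$ has type (B) then $a\leq b\leq y$ forces $a$ to have type (B).  These rule out the combinations $(\mathrm{A},\mathrm{B})$, $(\mathrm{A},\mathrm{C})$, and $(\mathrm{C},\mathrm{B})$ for $(a,b)$.

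The remaining six combinations are routine: in $(\mathrm{A},\mathrm{A})$ one uses $\semantics{a}\cup\semantics{x}\subseteq\semantics{b}\cup\semantics{x}$; in $(\mathrm{B},\mathrm{B})$, $\semantics{a}\cap\semantics{\xbar}\subseteq\semantics{b}\cap\semantics{\xbar}$; in $(\mathrm{C},\mathrm{C})$, nothing changes; and in $(\mathrm{B},\mathrm{A})$, $(\mathrm{B},\mathrm{C})$, $(\mathrm{C},\mathrm{A})$, containment $\semantics{a}^*\subseteq\semantics{b}^*$ is immediate because shrinking $a$ (or not touching it) and growing $b$ (or not touching it) only helps.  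The main obstacle is not any single computation but the bookkeeping needed to match the three clauses of the construction against the uniform (A)/(B)/(C) description, and making sure that the impossibility step genuinely uses $\Gamma\proves\exists(\ybar,\ybar)$; once that is in place the claim is a six-line case check.
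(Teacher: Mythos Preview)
Your trichotomy (A)/(B)/(C) is the right organizing idea, and once it is in place the six surviving cases are indeed routine.  The gap is in the sentence ``A quick check against the construction \ldots\ shows that \ldots'': this is not a quick check, and in fact it is where the real work of the claim lives.  For $p\in\smallsmall$ of type (B) (so $p\leq y$ but $\nott(\ybar\leq p)$), the paper's construction gives $\semantics{p}^*=\semantics{p}$, not $\semantics{p}\cap\semantics{\xbar}$; these coincide only if $\semantics{p}\cap\semantics{x}=\emptyset$ in $\Model$.  Dually, for $p\in\largelarge$ of type (A) (so $\ybar\leq p$, hence $\pbar\leq y$), the construction gives $\semantics{p}^*=\semantics{p}$, not $\semantics{p}\cup\semantics{x}$; these coincide only if $\semantics{x}\subseteq\semantics{p}$, i.e.\ $\semantics{\pbar}\cap\semantics{x}=\emptyset$.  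Both emptiness facts require the extra property of $\Model$ from Theorem~\ref{completeness-weak}(1): since the relevant nouns lie in $\smallorhalf$, a nonempty intersection would give $\Gamma\proves\exists(x,p)$ (resp.\ $\Gamma\proves\exists(x,\pbar)$), and then $p\leq y$ (resp.\ $\pbar\leq y$) and ({\sc darii}) yield $\Gamma\proves\exists(x,y)$, contradicting the standing hypothesis of Section~\ref{section-exists-full-result}.  This is precisely the argument the paper spells out in its cases (2) and (3); your reorganization does not avoid it but merely relocates it into the ``quick check,'' where you have not written it down.

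One smaller point: you write that $\Model\models\forall(a,b)$ ``in our proof-theoretic notation means $a\leq b$.''  It does not; $a\leq b$ means $\Gamma\proves\forall(a,b)$, which is strictly stronger.  Your monotonicity step (type (A) propagates up, type (B) propagates down) needs the syntactic $a\leq b$ to invoke ({\sc barbara}).  The paper's own proof makes the same tacit restriction (it repeatedly uses $\ybar\leq a\leq b$ to get $\ybar\leq b$), so in effect both arguments establish the claim only for $\forall(a,b)$ with $a\leq b$; this suffices for the application to sentences in $\Gamma$, but you should not assert the stronger statement.
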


\begin{proof}
    We have the following cases:
    (1) $a,b\in\smallsmall$;
    (2) $a\in \smallsmall$, $b\in\halfhalf$;
    (3) $a\in\smallsmall$, $b\in\largelarge$;
    (4) $a,b\in\halfhalf$;
    The other possible cases (such as $a,b\in \largelarge$) follow from these by taking complements.

    (1) Suppose that both $a, b\in\smallsmall$.
    Since our sentence $\forall(a,b)$ is in $\Gamma$, and $\Model\models\Gamma$, $\semantics{a}\subseteq \semantics{b}$.
    Note that $\semantics{a}^*$ is either $\semantics{a}$ or the larger set $\semantics{a}\cup\semantics{x}$.
    If $\semantics{a}^* = \semantics{a}$, then clearly 
    $\semantics{b}^* \supseteq \semantics{a}^*$.
    And if $\semantics{a}^* = \semantics{a}\cup\semantics{x}$, then $\ybar \leq a$.
    But then $\ybar\leq b$ as well, and so 
    $\semantics{b}^* = \semantics{b}\cup\semantics{x}$.
    So again we have $\semantics{a}^* \subseteq \semantics{b}^*$.
    
    (2) Suppose that $a\in \smallsmall$, $b\in\halfhalf$.
    
    The only extra step beyond what we saw in (1) is for the case
    when $\semantics{a}^* = \semantics{a}$
    and $\semantics{b}^* = \semantics{b} \cap \semantics{\xbar}$.
    This case happens when  $\nott(\ybar \leq a)$, but $\ybar \leq \bbar$.
    So $b\leq y$.     We must show that $\semantics{a}\subseteq \semantics{\xbar}$.
    For if not, $\semantics{a}\cap\semantics{x} \neq\emptyset$.
    Then $\Gamma\proves\exists(a,x)$; this is due to the fact that both $a$ and $x$ are in $\smallsmall$. 
    But since $a \leq b \leq y$, we have $\exists(x, y)$ from $\Gamma$.   
    This  contradicts the basic assumption in this section.

    (3) We next consider the case $a\in\smallsmall$, $b\in\largelarge$.
    Let us first assume that $\semantics{a}^* = \semantics{a}$.
    If $\semantics{b}^* = \semantics{b}$, then of course we are done.
    So we assume that $\semantics{b}^* = \semantics{b} \cap\semantics{\xbar}$, and thus that $\ybar\leq \bbar$.
    So $b\leq y$.  
    The rest of the argument at this point is exactly what we saw in (2) just above.

    We continue with (3),  turning to the subcase  $\semantics{a}^* = \semantics{a}\cup\semantics{x}$.
    So $\ybar \leq a$.
    We show that
    $\semantics{b}^* =\semantics{b}$  and that
    $\semantics{x}\subseteq \semantics{b}$.
    Suppose that $\ybar \leq \bbar$.
    As we noted above, $\Gamma\proves\exists(\ybar,\ybar)$.  Thus we have $\exists(a,\bbar)$; this contradicts $a\leq b$.
    This contradiction goes to show that  $\nott (\ybar \leq \bbar)$.
    Thus $\semantics{\bbar}^* = \semantics{\bbar}$.
    It follows that  $\semantics{b}^* = \semantics{b}$.
    To show that $\semantics{a}^*\subseteq \semantics{b}^*$, we only need to see that 
    $\semantics{x}\subseteq \semantics{b}$.
    Now if not, 
    $\semantics{x}\cap \semantics{\bbar} \neq \emptyset$.
    Since $x$ and $\bbar$ belong to $\smallsmall$,   $\Gamma\proves\exists(x,\bbar)$.
    But $\bbar \leq \abar \leq y$.   Thus $\Gamma\proves\exists(x,y)$; this  again
    contradicts the basic assumption in this section.
    
    Finally, (4) is when $a,b\in\halfhalf$.
    If $\semantics{a}^* = \semantics{a} \cup \semantics{x}$,
    then we have $\ybar\leq a$.   So in this case, $\ybar\leq b$, and thus
    $\semantics{b}^* = \semantics{b} \cup \semantics{x}$.
    So $\semantics{a}^* \subseteq \semantics{b}^*$ in this case.
    We thus have the case $\semantics{a}^* = \semantics{a}$ because $\nott(\ybar\leq a)$.
    The only interesting subcase on $\semantics{b}^*$ is when 
    $\semantics{b}^* =  \semantics{b} \cap\semantics{\xbar}$, and thus that $\ybar\leq \bbar$.
    The argument is exactly as in (2) above, with the slight change that $a\in\halfhalf$ rather than $\smallsmall$.
\end{proof}

\begin{claim}
    $\Nodel$ satisfies all $\exists(a,b)$ sentences in $\Gamma$.
    \label{claim-all-exists-N}
\end{claim}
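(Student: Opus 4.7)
The plan is to exhibit, for each sentence $\exists(a,b) \in \Gamma$, the canonical witness $e := \exists(a,b) \in \Gamma_{\exists} \subseteq N$ (the point coming from the disjoint union (\ref{eq-M})), and to show $e \in \semantics{a}^* \cap \semantics{b}^*$ in $\Nodel$.

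First I would verify $e \in \semantics{a} \cap \semantics{b}$ already in $\Model$, by case analysis on the partition of $\set{a,b}$. If $z \in \smallorhalf$, then (\ref{eq-x}) puts $e \in \semantics{z}$ exactly when $e \in \existsset_z$, i.e., when $a \leq z$ or $b \leq z$, and this holds trivially for $z \in \set{a,b}$. If $z \in \largelarge$, then $\semantics{z} = M \setminus \semantics{\zbar}$, so $e \in \semantics{z}$ iff $\nott(a \leq \zbar)$ and $\nott(b \leq \zbar)$: for $z = a$, the first would force $a \in \smallsmall$ (impossible), while the second, via {\sc (anti)}, would give $a \leq \bbar$, hence $\semantics{a} \cap \semantics{b} = \emptyset$ in $\Model$, contradicting $\Model \models \exists(a,b)$. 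The case $z = b$ is symmetric.

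Next I would verify that $e$ survives the passage to $\Nodel$. Inspecting the modification rules, each interpretation takes the form $\semantics{p}^* \in \set{\semantics{p},\ \semantics{p} \cup \semantics{x},\ \semantics{p} \cap \semantics{\xbar}}$, and the shrunken case $\semantics{p} \cap \semantics{\xbar}$ occurs exactly when $p \leq y$ (it never occurs for $p \in \smallsmall$; for $p \in \halfhalf$ or $p \in \largelarge$ it coincides with $\ybar \leq \pbar$). Hence if neither $a \leq y$ nor $b \leq y$ then nothing shrinks, and Step~1 already delivers the claim. Otherwise at least one of $\semantics{a}, \semantics{b}$ shrinks, and I need $e \in \semantics{\xbar}$, equivalently $\nott(a \leq x)$ and $\nott(b \leq x)$. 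But if, say, $a \leq y$ and $a \leq x$, then {\sc (some)} applied to $\exists(a,b) \in \Gamma$ gives $\exists(a,a)$, {\sc (darii)} with $a \leq x$ gives $\exists(a,x)$, and then {\sc (conversion)} followed by {\sc (darii)} with $a \leq y$ yields $\Gamma \proves \exists(x,y)$, contradicting the standing assumption $\Gamma \not\proves \exists(x,y)$ of Section~\ref{section-exists-full-result}. Each of the remaining three pairings $(a\leq y, b\leq x)$, $(b\leq y, a\leq x)$, $(b\leq y, b\leq x)$ yields $\exists(x,y)$ by an analogous chain of {\sc (some)}, {\sc (darii)}, {\sc (conversion)}, {\sc (darii)} starting from $\exists(a,b)$.

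The main obstacle I expect is keeping the case analysis orderly across the three partition classes and the three possible shapes of $\semantics{p}^*$; the unifying observation that every shrinkage has the single form $\semantics{p} \cap \semantics{\xbar}$ and corresponds to the single condition $p \leq y$ collapses Step~2 into the four short syllogistic derivations of $\exists(x,y)$ described above, each directly contradicting the standing consistency assumption on $\Gamma$.
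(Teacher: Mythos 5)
Your proof is correct and follows the paper's own strategy: the sentence $\exists(a,b)$ itself is the witness point lying in $\semantics{a}\cap\semantics{b}$, every shrinkage in $\Nodel$ has the form $\semantics{p}\cap\semantics{\xbar}$ and occurs only when $p\leq y$, so losing the witness would require $a\leq x$ or $b\leq x$, whence $\Gamma\proves\exists(x,y)$ by the syllogistic rules, contradicting the standing assumption of this section. If anything, your uniform treatment is slightly more careful than the paper's, which dismisses the case $a,b\in\smallorhalf$ as obvious and runs the contradiction argument only for the shrinkage of $b\in\largelarge$, even though a noun $a\in\halfhalf$ with $\ybar\leq\abar$ also has its interpretation cut down to $\semantics{a}\cap\semantics{\xbar}$ and needs exactly your argument.
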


\begin{proof}
    Take a sentence $\phi = \exists(a,b)$ from $\Gamma$.   We know that $\Model\models\exists(a,b)$.
    We still must check that  $\exists(a,b)$ is true in $\Nodel$.
    This is obvious when $a, b\in \smallorhalf$.
    As we know, when $a,b\in\largelarge$, we always have $\exists(a,b)$ on grounds of cardinality.
    We thus need only check the case when $a$ (say) belongs to $\smallorhalf$ and $b$ to $\largelarge$.
    We know from Section~\ref{claim-all-exists} that $\phi$ itself belongs to $\semantics{a}\cap\semantics{b}$.
    We thus only need to show that $\phi\in\semantics{b}^*$ when 
    $\semantics{b}^* = \semantics{b} \cap \semantics{\xbar}$; i.e., when
    $\ybar \leq \bbar$. 
    So we must show that $\phi\notin \semantics{x}$. For if we had $\exists(a,b) \in\semantics{x}$, then either 
    $a\leq x$ or $b\leq x$.
    In the case that $a\leq x$, we have $a \leq x$ and $b\leq y$.  So since  $\exists(a,b)\in \Gamma$,
    $\Gamma\proves\exists(x,y)$.
    This also happens in the second case, when 
    $b\leq x$.   For in this case, we have $b\leq x, y$.  Together with $\exists(a,b)\in \Gamma$, we  again see that $\Gamma\proves\exists(x,y)$.
    Either way, we contradict the basic assumption in this section.
\end{proof}

We have proven Claim~\ref{claim-all-exists-N}.   As a result, $\Nodel\models \Gamma$.
But $\semantics{\ybar}^*  = \semantics{x} \cup \semantics{\ybar} = \semantics{x}^* \cup \semantics{\ybar}$.
Thus $\semantics{x}^* \subseteq \semantics{\ybar}^*$.
It follows that    $\semantics{x}^* \cap \semantics{y}^*  = \emptyset$.
The upshot is that $\Nodel\not\models \exists(x,y)$, as desired. 

This concludes the work in Section~\ref{section-exists-full-result}.
We have proved Theorem~\ref{completeness}.

\begin{corollary}
    If $\Gamma\not\proves\phi$, then $\Gamma\cup\set{\phibar}$ is consistent.
\end{corollary}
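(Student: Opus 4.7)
The plan is to deduce this as an immediate semantic consequence of Theorem~\ref{completeness} together with the soundness of the logic (Proposition~\ref{prop-soundness}), using Definition~\ref{semanticnegation} to link $\phi$ and $\phibar$ through the semantics.

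First I would apply Theorem~\ref{completeness} to the hypothesis $\Gamma \not\proves \phi$. This yields a model $\Model$ with $\Model \models \Gamma$ and $\Model \not\models \phi$. Then by Definition~\ref{semanticnegation}, the sentence $\phibar$ has the characterizing property that $\Model \models \phibar$ iff $\Model \not\models \phi$. Hence $\Model \models \phibar$, and therefore $\Model \models \Gamma \cup \set{\phibar}$.

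Next I would invoke soundness to convert ``has a model'' into ``consistent''. Suppose toward contradiction that $\Gamma \cup \set{\phibar}$ is inconsistent. By the definition given at the start of Section~\ref{section-consistent-sets}, this means $\Gamma \cup \set{\phibar} \proves \psi$ for every sentence $\psi$. In particular, pick any sentence $\psi$ that fails in $\Model$; for instance, take $\psi = \phi$, which is false in $\Model$ by construction. Then Proposition~\ref{prop-soundness} gives $\Gamma \cup \set{\phibar} \models \phi$, so that $\Model \models \phi$, contradicting $\Model \not\models \phi$. Therefore $\Gamma \cup \set{\phibar}$ is consistent.

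There is no real obstacle here: the only subtle point is to make sure we use the semantic notion of negation from Definition~\ref{semanticnegation} rather than a syntactic negation (which the logic does not possess), and to be explicit that consistency is witnessed by the existence of \emph{any} model, via the contrapositive of soundness. The proof is essentially a two-line argument once Theorem~\ref{completeness} is in hand, and indeed it serves to justify \emph{a posteriori} the admissibility of reductio ad absurdum, as anticipated in the cautionary paragraph of Section~\ref{section-architecture}.
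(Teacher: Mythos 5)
Your argument is correct and is exactly the one the paper intends: the corollary is stated without proof immediately after Theorem~\ref{completeness}, and the ``Caution'' paragraph of Section~\ref{section-architecture} makes clear that the consistency of $\Gamma\cup\set{\phibar}$ is meant to follow semantically from the counter-model supplied by completeness together with soundness, just as you argue. No gaps.
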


As a corollary to the proof of our theorem, we also have the following result.

\begin{theorem}
    If $\Gamma$ has a model, then $\Gamma$ has a model of size $\aleph_n$ for some $n\in N$.
\end{theorem}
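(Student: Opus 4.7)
The plan is to track the cardinals produced by the model construction in Section~\ref{section-model-construction} and to observe that they stay within the initial segment $\aleph_0, \aleph_1, \aleph_2, \ldots$ of the $\aleph$ hierarchy.

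First, suppose $\Gamma$ has a model. By soundness (Proposition~\ref{prop-soundness}), $\Gamma$ is consistent. So we may apply Theorem~\ref{completeness-weak} to construct a model $\Model$. In that construction, the universe is assembled from structures $\Structure_1, \ldots, \Structure_{n+1}$ of sizes $\kappa_1, \ldots, \kappa_{n+1}$, together with the finite set $\Gamma_{\exists}$ of $\exists$-sentences in $\Gamma$ (and, possibly, one extra copy of $S$ added in Section~\ref{section-construction-large}).  Recall that the cardinals $\kappa_i$ are defined recursively: $\kappa_1 = \aleph_0$, and at each subsequent step either $\kappa_i = \kappa_{i-1}$ or $\kappa_i = (\kappa_{i-1})^+$, depending on whether $i$ demands a larger size. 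The definition at step $n+1$ is analogous.

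Next, I would show by induction on $i$ that each $\kappa_i$ is of the form $\aleph_{m_i}$ for some $m_i \in \mathbb{N}$. The base case is immediate since $\kappa_1 = \aleph_0$. For the inductive step, if $\kappa_{i-1} = \aleph_k$, then either $\kappa_i = \aleph_k$ or $\kappa_i = (\aleph_k)^+ = \aleph_{k+1}$. Since there are only finitely many steps (the listing in (\ref{listing}) has length $n$, bounded by the number of nouns in $\Gamma$), all the $\kappa_i$ lie within the $\aleph_n$-sequence.

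Finally, the total size of $\Model$ is $\kappa = \card{M}$, which by standard cardinal arithmetic equals $\max_{i} \kappa_i = \kappa_{n+1}$ (the finite set $\Gamma_{\exists}$ is absorbed, and the optional doubling in Section~\ref{section-construction-large} does not change the cardinality of an infinite set). Hence $\card{M} = \aleph_{m_{n+1}}$ for a finite $m_{n+1}$, as required.

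The argument is essentially bookkeeping; the main (minor) obstacle is simply verifying that the auxiliary pieces added at the later stages of the construction---the finite set $\Gamma_\exists$, the interpretations of $\halfhalf$-nouns extended via Lemma~\ref{lemma-use-zorn}, and the possible doubling of $S$ when accommodating $\largelarge$---do not push the cardinality above $\kappa_{n+1}$. But each of those modifications either adds a finite set or adds a set of size at most $\kappa_{n+1}$, so the overall cardinality remains an $\aleph_m$ with $m$ finite.
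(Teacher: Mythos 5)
Your proof is correct and is essentially the argument the paper intends: the theorem is stated as a corollary to the model construction, in which $\kappa_1=\aleph_0$ and each subsequent $\kappa_i$ is either unchanged or the successor cardinal, so after finitely many steps the universe has size $\kappa_{n+1}=\aleph_m$ for some finite $m$. Your bookkeeping of the auxiliary pieces (the finite set $\Gamma_\exists$, the Zorn's Lemma extension, the optional doubling) matches what the construction requires.
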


Since the proof system is direct (i.e., \emph{reductio ad absurdum} is not used), we also have
the following result.

\begin{theorem}
    The question of whether $\Gamma\models\phi$ or not can be decided in logspace.
\end{theorem}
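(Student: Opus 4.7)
The plan is to derive this complexity bound as a corollary of the proof-theoretic completeness result rather than by any direct semantic argument. By Theorem~\ref{completeness} and Proposition~\ref{prop-soundness}, we have $\Gamma\models\phi$ iff $\Gamma\proves\phi$; crucially, because the proof system is \emph{direct} (no \emph{reductio ad absurdum}), this equivalence does not hide any non-constructive step. So the task reduces to deciding provability in the system of Figures~\ref{figure-finite-setting-soundpart} and~\ref{figure-secondpart}.

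First, I would observe that only variables appearing in $\Gamma\cup\{\phi\}$ (together with their complements under the involution $\overline{\phantom{p}}$) are ever needed; let $V$ denote this set. The total number of atomic sentences of $\Sdagleq$ over $V$ is then $O(|V|^2)$, which is polynomial in the input size. Provability is then the closure of $\Gamma$ under the fixed, finite list of inference rules, and this closure can be described as reachability in a directed graph whose nodes are the atomic sentences over $V$ and whose edges encode single rule applications (with the middle term, if any, chosen from $V$).

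The main step is to argue that this closure computation fits within logarithmic space. I would proceed by organizing the derivation into a bounded number of passes over standard transitive-closure-style relations: the subset order $\leq$ (via ({\sc barbara}), ({\sc anti})), the size order $\leq_c$ (via ({\sc subset-size}), ({\sc card-trans}), ({\sc up})), the strict size order $<_{more}$ (via ({\sc more-left}), ({\sc more-right}), ({\sc weak-more-anti})), and then the interactions with the $\exists$-rules and the \emph{ex falso} rules ({\sc x}), ({\sc x-card}). For each binary rule, the middle term lives in $V$ and can be indexed by $O(\log|V|)$ bits; each relation is built by composing reachability queries whose indices require only logarithmic counters.

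The main obstacle will be keeping the computation deterministic and strictly within $O(\log n)$ rather than $O(\log^2 n)$ or nondeterministic logspace: the binary rules naturally suggest ``guessing'' a middle term, which puts the problem into \textbf{NL} immediately. To bring it down, I would exploit the very restricted structure of the rules — each conclusion is determined by at most two atomic sentences on the same small pool of nouns — so that the required reachability queries are over graphs of a bounded structural type (essentially, closures of a fixed number of binary relations on $V$), for which the closure can be computed by a straight-line iteration whose only state is a small constant number of indices into $V$. Given soundness and the explicit completeness construction, no further semantic reasoning is needed beyond this combinatorial closure; hence the decision procedure runs in logspace.
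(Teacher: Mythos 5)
Your overall route is the same as the paper's: reduce $\Gamma\models\phi$ to $\Gamma\proves\phi$ by soundness and completeness, note that any derivation can be confined to the atoms built from the variables of $\Gamma\cup\set{\phi}$ and their complements (the paper states this reduction lemma and omits its proof, exactly as you do), observe that there are only polynomially many such sentences, and then treat the deductive closure as a reachability/closure computation over a fixed finite rule set. Up to that point the two arguments coincide, and the paper is no more detailed than you are.

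The one place where your write-up makes a concrete claim that the paper does not, and where the claim does not hold up, is the final paragraph. You correctly identify that the naive procedure (guess a middle term for each binary rule) lands in \textbf{NL}, but your proposed fix --- that the relevant graphs are ``of a bounded structural type'' so that their closures can be computed by a straight-line iteration holding only a constant number of indices --- is not justified and is almost certainly false as stated. The relation $\leq$ is the reflexive-transitive closure of an \emph{arbitrary} directed graph supplied by the $\forall$-sentences of $\Gamma$ (closed under ({\sc anti})), and similarly for $\leq_c$ and $\leqmore$; nothing about the rule format bounds the in/out-degree, path length, or any other structural parameter of these graphs. Deciding reachability in arbitrary directed graphs in deterministic logspace is equivalent to $\mathbf{L}=\mathbf{NL}$, so your derandomization step would resolve that open problem. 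The honest conclusion from your (and the paper's) argument is membership in \textbf{NL} (hence decidability in space $O(\log^2 n)$ by Savitch, and in polynomial time); indeed the reference the paper cites for complexity matters places the satisfiability problems for these syllogistic fragments at NLogSpace-complete. So either the theorem should be read with ``logspace'' meaning nondeterministic logspace, or a genuinely new idea is needed; your paragraph asserts the stronger bound without supplying that idea.
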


See~\cite{phmoss} for a general discussion of complexity matters concerning syllogistic logics.
 The reason for the efficient proof search is that the logic did not use \emph{reductio ad absurdum}.
 Instead it uses  \emph{ex falso quo 
quodlibet}.  In effect, one tries to see whether $\Gamma\proves\phi$ by generating all of the proofs from $\Gamma$. 
One needs to know that if $\Gamma\proves \phi$, then a derivation may be found using only the atoms in $\Gamma\cup\set{\phi}$ and their complements.
We omit the details.
 There are only
polynomially many such sentences, and the proof search is a generalization of searching for paths in graphs.
If $\Gamma\not\proves\phi$, then there is a counter-model.   It may be chosen to be of size $\aleph_n$ for some natural number $n$.
Further, it may be taken to be finitely describable in a strong sense.

\begin{remark}
One way to extend the completeness result in this paper is to add the boolean connectives $\andd$ and $\nott$ to the sentences.  
So in effect, one has propositional logic with the sentences of $\Sdagleq$
as atomic sentences.   The semantics is the obvious one.   For the proof theory, one takes a Hilbert style axiomatization of propositional logic, 
and adds implicational sentences corresponding to the rules of $\Sdagleq$.  (For example, corresponding to ({\sc non-empty}),
we would have $\existsgeq(p,\pbar) \iif \existsgeq(\pbar,x)$.)
The argument is an adaptation of one from~\cite{logic:moss08}.
The completeness boils down to showing that every  consistent sentence $\phi$ in the new logic has a model. 
Using disjunctive normal forms, we may assume that $\phi$ is a conjunction of atomic sentences or their negations.
But the atomic sentences in this language are closed under (semantic) negation, so we may assume that $\phi$ is a conjunction  $\bigwedge S$ of 
a (finite) set $S$ of sentences of $\Sdagleq$.   And here, we claim that the set of conjuncts of $\phi$ is consistent in $\Sdagleq$. 
For this, we argue by induction on proofs in $\Sdagleq$ 
which don't use the ({\sc x}) rules
that if $S\proves\psi$ in $\Sdagleq$, then $\proves \bigwedge S \iif \psi$.
\end{remark}

\section{Examples}
\label{section-examples}

Let $\Gamma$ be the set of sentences shown in Figure~\ref{figure-example-Gamma}.

\begin{ex}
    Notice that in any model of $\Gamma$, $\semantics{c}$ and $\semantics{d}$ would be sets of the same size, 
    and yet $\semantics{c}$ is a
    proper subset of $\semantics{d}$.    So $\semantics{c}$ and $\semantics{d}$
    must be infinite.
    Further, since we have $\existsmore(e,c)$ in $\Gamma$, $\semantics{e}$ must be uncountable.
\end{ex}

\begin{ex}
    The standard partition of $\Gamma$ is
    $$\begin{array}{lcl}
    \smallsmall & = & \set{a,b,c,d} \\
    \halfhalf & = & \set{e,\ebar,f,\fbar, g, \gbar} \\
    \largelarge & = & \set{\abar,\bbar,\cbar,\dbar}
    \end{array}
    $$
\end{ex}

\begin{figure}[p]
    \begin{mathframe}
        $$\begin{array}{l@{\qquad}l@{\qquad}l@{\qquad}l}
        \forall(a,\abar)  & \existsmore(c,b) 
        & \existsgeq(c,d)  &
        \existsgeq(d,c) \\
        \existsmore(e,c) & 
        \existsgeq(e,\ebar) & \existsgeq(\ebar,e) 
        & \existsgeq(f, e) \\
        \existsgeq(e, f)  &
        \existsgeq(g, b) & \forall(c,d) & \exists(d, \cbar)  \\
        \exists(c,f) & \forall(e,f)  &
        \exists(\ebar,f)  &
        \exists(\ebar,\fbar) \\
        \exists(c,e) & 
        \end{array}
        $$
        
        \medskip
        The relation $\leq$ is given by  $a\leq x$ for all $x$; 
        and  $c \leq d$, 
        and
        $e\leq f$.
        We also have the 
        relations derived from these by ({\sc anti}):  
        $x\leq \abar$ for all $x$;
        $\dbar \leq \cbar$,  
        and
        $\fbar \leq \ebar$.
        
        \medskip
        We get $         \phi_1=\exists(d, \cbar),\: \phi_2=\exists(\ebar,f),\: \phi_3=\exists(\ebar,\fbar),\: \phi_4=\exists(c,e)$ and  $ \phi_5=\exists(c,f) $ for $ \Gamma_\exists $ sentences.
        
        \medskip
        The relation $\leq_c$ has all of the pairs in $\leq$ above, and also 
        $b \leq_c  c$, $c \leq_c d \leq_c c$, $c\leq_c e$, $e \leq_c \ebar \leq_c e$,  $e \leq_c f \leq_c e$, 
        $f \leq_c \fbar \leq_c f$, and 
        $b \leq_c g$; also, for the all nouns $n$, 
        $n \leq_c \abar,\bbar,\cbar, \dbar, e, \ebar, f, \fbar$.
        
        \medskip  
        The strict part $<_c$ is 
        $a <_c x $ for all $x$ other than $a$;
        also $b<_c c$, $c <_c e, \ebar, f, \fbar$, and $b <_c g$.
        
        \medskip  
        The relation $<_{more}$ is given by
        $a, b \leqmore x $ for all $x$ other than $a$, $b$, $g$, $\gbar$; 
        also $a, b, c, d \leqmore y$ for all $y$ other than $a$, $b$, $c$, and $d$;
        and  $a,b, c,d \leqmore \abar, \bbar, \cbar, \dbar, e, \ebar, f, \fbar$.
        
        \medskip
        The relation $\equiv$ is the identity and also
        $c\equiv_c d$, and 
        $e \equiv_c \ebar \equiv_c f \equiv_c \fbar$.

        \medskip
        
        A model:
        \begin{small}
            $$\begin{array}{ll}
            \begin{array}{lcl}
            \semantics{a} & = & \emptyset \\
            \semantics{b} & = & \emptyset \\
            \semantics{c} & = & A_1 + \set{\phi_4, \phi_5}  \\
            \semantics{d} & = &(A_1  \cup  A_2 )+ \set{\phi_1,\phi_4,\phi_5}  \\
            \semantics{e} & = &  B_1 + \set{\phi_4} \\
            \semantics{f} & = &   (B_1 \cup B_2) + \set{\phi_2,\phi_4,\phi_5} \\
            \semantics{g} & = &  B_3 + \set{\phi_1,\phi_2,\phi_3,\phi_4,\phi_5} \\
            \end{array}
            &
            \begin{array}{lcl}
            \semantics{\abar} & = & (A_1 \cup A_2) + (B_1\cup B_2\cup B_3) + \set{\phi_1, \ldots, \phi_5}
            \\
            \semantics{\bbar} & = &(A_1 \cup A_2) + (B_1\cup B_2\cup B_3) + \set{\phi_1, \ldots, \phi_5}
            \\
            \semantics{\cbar} & = & A_2 +  (B_1 \cup B_2\cup B_3) + \set{\phi_1,\phi_2,\phi_3} 
            \\
            \semantics{\dbar} & = &     (B_1 \cup B_2\cup B_3) + \set{\phi_2,\phi_3} 
            \\
            \semantics{\ebar} & = & (A_1 \cup A_2) + (B_2 \cup B_3) + \set{\phi_1,\phi_2, \phi_3,\phi_5} \\
            \semantics{\fbar} & = & (A_1 \cup A_2) +  B_3 + \set{\phi_1,\phi_3} \\
            \semantics{\gbar} & = &(A_1 \cup A_2) + (B_1\cup B_2)\\
            \end{array}
            \end{array}
            $$
        \end{small}

        Here $A_1$ and $A_2$ are disjoint countable sets; $B_1$, $B_2$, $B_3$ are pairwise disjoint sets of size 
        $\aleph_1$; and $\phi_1$, $\ldots$, $\phi_5$ are the $\exists$ sentences in $\Gamma$ as in (\ref{numbered-phis}).

    \end{mathframe}
    \caption{At the top is shown a  set  of sentences $\Gamma$ which is  used 
        as a running example throughout Section~\ref{section-examples}.
        Below is some information about the relations derived from $\Gamma$.
        At the bottom is one example of a model which may be found by the method of Section~\ref{section-model-construction}.
        \label{figure-example-Gamma}}
\end{figure}

\begin{ex}
    The set $Q$ in (\ref{eq-Q}) is $\set{c,d}$.    This set is an equivalence class for $\equiv_c$.
    The one and only listing of $Q$ is the one-term sequence $\set{c,d}$.
    
    In terms of our earlier notation,  $[p_1] = \set{c, d} $;  for later, $p_2 = \halfhalf$.

    $\Delta_1$ is the set 
    $$\set{ \exists(c,c), \exists(d,d),  \forall(c,d),  \exists(d, \cbar)}
    $$
    One possible finite model of $\Delta_1$  with the right properties has universe $\set{1,2}$, $\semantics{c} = \set{1}$, and $\semantics{d} = \set{1,2}$.
    So for our $\Structure_1$ we take two disjoint sets $A_1$ and $A_2$, each of cardinality $\aleph_0$,  and then set 
    $\Structure_1 = A_1 \cup A_2 $, 
    $\semantics{c} = A_1$, and $\semantics{d}  = A_1 \cup A_2$.  
    Next, $n+1 = 2$ demands a larger size.   So we set $\kappa_2 = \aleph_1$.
    Also,
    $$\begin{array}{cccc}
    \Delta_2  & = & \set{ \exists(e,e), \exists(\ebar,\ebar), \exists(f,f), \exists(\fbar,\fbar), \exists(g,g), \exists(\gbar,\gbar),
        \forall(e,f),
        \exists(\ebar,f), \exists(\ebar,\fbar)}
    \end{array}
    $$
    One finite structure of $\Delta_2$  has universe $\set{1,2,3}$,
    $\semantics{e} = \set{1}$, $\semantics{f} = \set{1,2}$,  and $\semantics{g}  = \set{3}$.
    (Note that $\semantics{g}$ could also be $\set{1}$ or $\set{2}$.)
    So for $\Structure_2$, we take disjoint sets $B_1$, $B_2$, $B_3$ of size $\aleph_1$, and 
    $S_2 = B_1 \cup B_2\cup B_3$, 
    $\semantics{e}  = B_1$, $\semantics{f} = B_1\cup B_2$, and $\semantics{g} = B_3$.
    Thus 
    $\semantics{\ebar}  = B_2\cup B_3$, $\semantics{\fbar} = B_3$, and $\semantics{\gbar} = B_1\cup B_2$.
    
    We are almost ready to define the model of $\Gamma$ produced by our method.
    We take the $\exists$ sentences in $\Gamma$ and number them:
    \begin{equation}
    \begin{array}{lllll} \phi_1 = 
    \exists(d, \cbar)    &
    \phi_2 =\exists(\ebar,f)  &  \phi_3 =\exists(\ebar,\fbar) & 
    \phi_4 = \exists(c,e) & \phi_5 = \exists(c,f)
    \end{array}
    \label{numbered-phis}
    \end{equation}
    And now we can say what our structure $\Structure$ is.  The universe is
    $$(A_1  \cup A_2) + (B_1 \cup B_2 \cup B_3) + \set{\phi_1, \phi_2, \phi_3, \phi_4, \phi_5}
    $$
    where the $A_i$ have size $\aleph_0$ and the $B_j$ have size $\aleph_1$.  The interpretations of the nouns is
    $$\begin{array}{ll}
    \begin{array}{lcl}
    \semantics{a} & = & \emptyset \\
    \semantics{b} & = & \emptyset \\
    \semantics{c} & = & A_1 + \set{\phi_4, \phi_5}  \\
    \semantics{d} & = &(A_1  \cup  A_2 )+ \set{\phi_1,\phi_4,\phi_5} \\
    \semantics{e} & = &  B_1 + \set{\phi_4} \\
    \semantics{g} & = &  B_3 + \set{\phi_1,\phi_2,\phi_3,\phi_4,\phi_5}\\
    \end{array}
    &
    \begin{array}{lcl}
    \\
    \\
    \\
    \\
    \semantics{\ebar} & = & (A_1 \cup A_2) + (B_2 \cup B_3) + \set{\phi_1,\phi_2, \phi_3} \\
    \semantics{\fbar} & = & (A_1 \cup A_2) +  B_3 + \set{\phi_1,\phi_3} \\
    \semantics{\gbar} & = & (B_1\cup B_2)\\
    \end{array}
    \end{array}
    $$

\end{ex}

\begin{ex}  The work on refinements in Section~\ref{section-refine-three-classes}
    applies to both $g$ and also to $\gbar$.   So we would get two different partitions with all the required properties:
    $$
    \begin{array}{l | l}
    \begin{array}{lcl}
    \smallsmall & = & \set{a,b,c,d,g} \\
    \halfhalf & = & \set{e,\ebar,f,\fbar} \\
    \largelarge & = & \set{\abar,\bbar,\cbar,\dbar,\gbar}
    \end{array}
    \qquad & \qquad 
    \begin{array}{lcl}
    \smallsmall & = & \set{a,b,c,d,\gbar} \\
    \halfhalf & = & \set{e,\ebar,f,\fbar} \\
    \largelarge & = & \set{\abar,\bbar,\cbar,\dbar,g}
    \end{array}
    \end{array}
    $$
\end{ex}

\section{Conclusion}

This paper has shown a soundness and completeness theorem of the logic $\Sdagleq$ 
when interpreted
on infinite sets. 
The main point of the paper is that 
the language has cardinality comparison features are not expressible in first order logic,
and yet  it has a complete proof system
with  
  an efficient proof search  algorithm.     Some may find these results to be surprising.
  The message is that the very weak system of syllogistic logic has extensions which are well-behaved and expressive.
  
  This paper contributes to the project of finding larger and larger well-behaved logics which talk about sizes of sets.
 One next question would be to take the logic of~\cite{moss:forDunn} and add the quantifier {\sf most}, interpreted by strict majority.
  
 \rem{
We would add boolean operations. In this case, every sentence of the logic is a propositional atom of the new logic. An important point would be  $ \exists(x,y) \wedge \exists(y,x)  $  for any variables $ x,y $  since $AC $.

We hope that the construction techniques in this paper will help with other results in the area.
}
\bibliography{natlogic}

\vspace{3cm}

Lawrence S.~Moss, Indiana University, Department of Mathematics, Bloomington, USA.
\thanks{This work was partially supported by a grant from the Simons Foundation ($\#$245591 to Lawrence Moss)
\indent E-mail: lmoss@indiana.edu\vspace{1cm}

Sel\c{c}uk Topal,  Bitlis Eren University, Department of Mathematics, Bitlis, Turkey.

\indent E-mail: s.topal@beu.edu.tr

\end{document}